\newcommand{\EQ}[1]{\begin{equation}\begin{split} #1 \end{split}\end{equation}}
\newcommand{\EQN}[1]{\begin{equation*}\begin{split} #1 \end{split}\end{equation*}}
\newcommand{\CAS}[1]{\begin{cases} #1 \end{cases}}
\newcommand{\case}[1]{
\EQ{
\begin{array}{l}
\left\{
\begin{aligned}
#1
\end{aligned}
\right.
\end{array}
}
}
\definecolor{deepgreen}{cmyk}{1,0,1,0.5}
\newcommand {\norm}[1]{\left\|#1\right\|}
\newtheorem{theorem}{Theorem}[section]
\newtheorem{prop}[theorem]{Proposition}
\newtheorem{lemma}[theorem]{Lemma}
\theoremstyle{remark}
\newtheorem{remark}[theorem]{Remark}
\newtheorem{rema}[theorem]{Remark}
\theoremstyle{definition}
\def\th2{\frac{\theta}{2}}
\def\normb#1{\big\|#1\big\|}
\def\norm#1{\|#1\|}
\def\wh#1{\widehat{#1}}
\newcommand{\Div}{{\rm div\,}}
\newcommand{\R}{{\mathbb R}}
\newcommand{\Z}{{\mathbb Z}}
\newcommand{\X}{{\mathbb X}}
\newcommand{\ft}{{\mathcal{F}}}
\newcommand{\Hl}{{\mathcal{H}}}
\newcommand{\les}{{\lesssim}}
\newcommand{\supp}{{\mbox{supp}}}
\numberwithin{equation}{section}
\begin{document}

\title[3D Compressible Navier-Stokes equation]{
Global well-posedness for the 3D compressible Navier-Stokes equations in optimal Besov space
}

\author[Z. Guo]{Zihua Guo}
\address{School of Mathematics, Monash University, Melbourne, VIC 3800, Australia}
\email{zihua.guo@monash.edu}

\author[Z. Song]{Zihao Song}
\address{School of Mathematics, Nanjing University of Aeronautics and Astronautics, Nanjing 211106,
Jiangsu, China}
\email{szh1995@nuaa.edu.cn}

\author[M. Yang]{Minghua Yang}
\address{School of Information Management and Mathematics, Jiangxi University of Finance and
Economics, Nanchang, 330032, China}
\email{minghuayang@jxufe.edu.cn}

\thispagestyle{empty}

\begin{abstract}
We consider the Cauchy problem to the 3D barotropic compressible Navier-Stokes equation. We prove global well-posedness, assuming that the initial data $(\rho_0-1,u_0)$ has small norms in the critical Besov space $\X_p=\dot{B}_{p,1}^{3/p}(\R^3)\times \dot{B}_{p,1}^{-1+3/p}(\R^3)$ for $2\leq p<6$ and $(\rho_0-1,\rho_0u_0)$ satisfies an additional low frequency condition. Our results extend the previous results in \cite{FD2010, CMZ2010, H20112} where $p<4$ is needed for high frequency, to the optimal range $p<6$. The main ingredients of the proof consist of: a novel nonlinear transform that uses momentum formulation for low-frequency and effective velocity method for high frequency, and estimate of parabolic-dispersive semigroup that enables a $L^q$-framework for low frequency.
\end{abstract}

\subjclass[2020]{35Q30, 76N06}
\keywords{Compressible Navier-Stokes equations, Well-posedness, Critical spaces}

\maketitle

\tableofcontents

\section{Introduction}

In this paper, we consider the Cauchy problem to the barotroptic compressible Navier-Stokes equations:
\begin{equation}\label{cns1}
\left\{
\begin{aligned}
& \partial_{t}\rho+{\rm div}\, (\rho u)=0,\quad  t>0,\,  x\in \mathbb{R}^{d},\\
&  \partial_{t}(\rho u)+{\rm div}\, (\rho u\otimes u)-\mathcal{A}u+\nabla P(\rho)=0,\quad  t>0,\,  x\in \mathbb{R}^{d},\\
& \rho(0,x)=\rho_{0}(x),\,   u(0,x)=u_{0}(x),\quad x\in \mathbb{R}^{d}.  \\
\end{aligned}
\right.
\end{equation}
Here, the two unknown functions $\rho(t,x)$ and $u(t,x)$ represent the fluid's density and velocity, respectively. We will also use the momentum $m=\rho u$. $P(\rho)$ represents the pressure of the fluid  depending on the density and $\mathcal{A} = \mu\Delta + (\mu + \lambda)\nabla \mathrm{div}$ is the Lamé operator, representing the viscosity, where the constants $\mu$ and $\lambda$ satisfy the elliptic conditions $\mu > 0$ and $2\mu +\lambda > 0$.

The Cauchy problem \eqref{cns1} is of fundamental importance (see \cite{PL1998, RH2015} and the references therein).
The study of the compressible barotropic Navier-Stokes equations has seen significant growth in past decades. Nash \cite{Nash} proved the local existence and uniqueness of smooth solutions. Matsumura-Nishida \cite{MN1979,MN1980} proved the global well-posedness for solutions near the constant equilibrium with small initial data in $H^s$.  Lions \cite{PL1998} established the global existence of weak solutions with large initial data. Xin \cite{X1998} proved the existence of blow-up solutions. We do not attempt to exhaust the list of related studies. 

We assume that the fluid density $\rho$ is a small perturbation near $1$. Let $a=\rho-1$. To simplify our presentation, we assume that 
\EQ{
\gamma = P'(1)=&\nu=2\mu +\lambda=1\\
I(a) = \frac{a}{a+1}, G'(a) =&\frac{ P'(a+1)}{a+1},k(a)=G'(a)-G'(0)
}
which allow us to rewrite \eqref{cns1} as:
\begin{equation}\label{CNS}
\left\{
\begin{aligned}
& \partial_ta+\Div u=-\Div(au),\\
& \partial_{t}u-\mathcal{A}u+\nabla a=-u\cdot \nabla u-I(a)\mathcal{A}u-k(a)\nabla a
\\
& a(0,x)=a_{0}(x),\,   u(0,x)=u_{0}(x).  \\
\end{aligned}
\right.
\end{equation}
Scaling invariance is important to achieve global-in-time results. This methodology originates in the pioneering work of Fujita and Kato \cite{FK1964} on the classical incompressible Navier-Stokes equations. For barotropic fluids, the scaling transformation
\begin{equation*}
\begin{aligned}
(a_{0}, u_{0})\rightsquigarrow(a_{0}(\lambda x),\lambda u_{0}(\lambda x)),\quad(a(t,x), u(t,x))
\rightsquigarrow(a(\lambda^{2}t, \lambda x),\lambda u(\lambda^{2}t, \lambda x)), \quad \lambda>0.
\end{aligned}
\end{equation*}
leaves equation \eqref{CNS} invariant when the pressure law $P$ is replaced by $\lambda^{2}P$. 
A natural choice of $\X$ which is scaling invariant for \eqref{CNS} is the critical Besov space
\begin{equation}\label{criticalspace}
(a_{0},\, u_{0})\in \X_p:=\dot{B}_{p,1}^{\frac{d}{p}}\times\dot{B}_{p,1}^{-1+\frac{d}{p}}.
\end{equation}

There are much literature studying \eqref{CNS} with initial data in $\X_p$. For local results,
Danchin  \cite{D2001,D2005} and Chen-Miao-Zhang \cite{CMZ2010R} established local existence for $1\leq p<2d$ and uniqueness for $1\leq p\leq d$. The uniqueness for $d<p<2d$ was proved in \cite{D2014} via a Lagrangian approach for \eqref{CNS}. 
On the other hand, the range $p<2d$ is optimal for well-posedness in $\X_p$. Ill-posedness in the sense that the continuity of the solution map $S_T$ fails at the origin in $\X_p$ was proved by Chen-Miao-Zhang \cite{CMZ2015} for $p>2d$, and then by Iwabuchi-Ogawa \cite{IO2022} for $p=2d$.  Recently, the first and third authors-Zhang \cite{GYZ2024} obtained the continuity of the solution map $S_T$ from $\X_p \to C_T\X_p$ for $1\leq p<2d$ via a combination of the Langrangian approach and the method of frequency envelope.

For global results, in a seminal paper \cite{D2000}, by delicate energy method, Danchin proved global existence and uniqueness for small initial data 
$(a_0,u_0)\in ( \dot{B}_{2,1}^{d/2-1} \cap \dot{B}_{2,1}^{d/2}) \times \dot{B}_{2,1}^{d/2-1}$.
It turns out that it is natural to use hybrid Besov space for the density, as the linearized equation has different behaviours on high and low frequency. Additional conditions on the low frequency seems necessary. Later, the results were extended to $L^p$-framework by   Charve--Danchin \cite{FD2010}, Chen--Miao--Zhang \cite{CMZ2010}, and Haspot \cite{H20112}, proving global existence and uniqueness with 
initial data satisfying
\EQ{
\|(a_{0},u_{0})^\ell\|_{\dot{B}_{2,1}^{d/2-1}} 
+ \|(a_{0},u_0)\|_{\X_p} \ll 1,
}
where \(p\) satisfies $2 \leq p \leq \min\left\{4, 
\frac{2d}{d-2}\right\}$ for $d \geq 3$ or $2 \leq p < 4$ for $d = 2$.
Note that uniqueness was restricted to $2\leq p\leq d$ in \cite{FD2010,CMZ2010}, and was improved in \cite{H20112} by introducing an effective velocity. 
Here for function $f$, $f^\ell$ (and $f^h$) means the low-frequency (high-frequency) component (see \eqref{hilo}). In the $L^p$-framework, the energy method does not work, and some novel methods were developed. On the other hand, it contains some physically interesting data. In particular for the case $p>d$, the regularity index is negative, and it could include some highly oscillating initial data (see Remark \ref{rk:highosci}). However, compared to the local results, the range of $p$ for global results is much smaller. 
To the best of our knowledge, the global results in the optimal Besov space in three and higher dimensions remain open problems. 

The purpose of this paper is to address this problem in three dimensions. The main result of this paper is

\begin{theorem}\label{gwp}

Assume $d=3$, $2 \leq p< 6,\,\,2 \leq q \leq p \leq 2q$, and 
\begin{align}\label{condition}
\frac{3}{q}-\frac{3}{p}\leq 1<\frac{2}{q}+\frac{3}{p}.
\end{align}
Assume $0<\delta\ll 1$. Assume $(a_{0},u_{0})\in E_\delta$ where (note that $m=(a+1)u$) 
\EQ{
E_\delta=\{(a_0,u_0) \in \X_{p}: \|(a_{0},m_{0})^{\ell}\|_{\dot{B}_{q,1}^{-3
+\frac{7}{q}}}+\|(a_{0},u_0)\|_{\X_p}\leq \delta\}.
}
Then the Cauchy problem \eqref{CNS} has a unique global solution $(a,u)\in X_{q,p}$ satisfying $\norm{(a,u)}_{X_{q,p}}\leq C\delta$ where $X_{q,p}$ is the Banach space defined by the norm
\begin{equation}\label{bound}
\begin{aligned}
\|(a,u)\|_{X_{q,p}}:=&\|(a,u)^{\ell}\|_{\tilde{L}_{t}^{\infty}(\dot{B}_{q,1}^{-1
+\frac{3}{q}})\cap\tilde{L}_{t}^{2}(\dot{B}_{q,1}^{-1+\frac{5}{q}})\cap\tilde{L}_{t}^{1}(\dot{B}_{q,1}^{\frac{5}{q}})}\\
&+\|a^{h}\|_{\tilde{L}_{t}^{\infty}(\dot{B}_{p,1}^{\frac{3}{p}})\cap\tilde{L}_{t}^{1}(\dot{B}_{p,1}^{\frac{3}{p}})}+
\|u^{h}\|_{\tilde{L}_{t}^{\infty}(\dot{B}_{p,1}^{-1+\frac{3}{p}})
\cap\tilde{L}_{t}^{1}(\dot{B}_{p,1}^{1+\frac{3}{p}})}.
\end{aligned}
\end{equation}
Moreover, $(a,u)^{\ell}\in {C}_{b}(\mathbb{R}_{+};\dot{B}_{q,1}^{-1+\frac{3}{q}})$, $(a,u)^{h}\in {C}_{b}(\mathbb{R}_{+};\X_p)$, $(a,m)^\ell \in C(\R_+: \dot{B}_{q,1}^{-3
+\frac{7}{q}})$, and
\EQ{
&\|(a,m)^\ell\|_{\dot{B}_{q,1}^{-3
+\frac{7}{q}}}\leq C(t), \quad \forall t>0,
}
and, for any $T>0$, the solution map $S_T: E_\delta\subset \X_p\to X_{q,p}\subset C_T\X_p$ is continuous.
\end{theorem}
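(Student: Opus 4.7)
The plan is to set up a two-tiered ansatz that exploits the different structure of the linearized system at low and high frequencies. At low frequency I would work with the density-momentum pair $(a,m)$, where $m=(a+1)u$. The crucial point is that $\partial_t a+\Div m=0$ is exact (not merely linearized), so after subtracting the heat part one gets a clean parabolic-dispersive system
\EQN{
\partial_t a+\Div m=0, \qquad \partial_t m-\mathcal{A}m+\nabla a=\mathcal{N}(a,m),
}
whose symbol has eigenvalues $-\tfrac12|\x|^2\pm i|\x|\sqrt{1-|\x|^2/4}$ at low frequency. At high frequency I would introduce an effective velocity of Haspot type, roughly $w=u+(-\De)^{-1}\nabla a$, which turns the linearized high-frequency block into a damped transport equation for $a^h$ together with a heat equation for $w^h$, thus decoupling the pressure. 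The admissible pairs $(p,q)$ in \eqref{condition} are dictated by the embeddings needed to match the low-frequency $L^q$-norms with the high-frequency $L^p$-norms at the threshold dyadic block.

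\textbf{Linear estimates.} For the low-frequency block I would prove Strichartz-type bounds for the parabolic-dispersive semigroup acting on $(a,m)^\ell$, namely
\EQN{
\|(a,m)^\ell\|_{\wt L^r_t(\dot B^{-3+7/q+2/r}_{q,1})}\lec \|(a_0,m_0)^\ell\|_{\dot B^{-3+7/q}_{q,1}}+\text{source},
}
for $r\in\{1,2,\infty\}$, via explicit Fourier-multiplier analysis on each dyadic block combined with Bernstein. This is the ``parabolic-dispersive semigroup'' estimate advertised in the abstract, and it is what allows an $L^q$-framework at low frequency in three dimensions. For the high-frequency block, the damped transport equation gives $\wt L^\infty_t\cap\wt L^1_t$ control of $a^h$ in $\dot B^{3/p}_{p,1}$, while maximal regularity for the heat equation supplies $\wt L^\infty_t(\dot B^{-1+3/p}_{p,1})\cap \wt L^1_t(\dot B^{1+3/p}_{p,1})$ control of $u^h$.

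\textbf{Nonlinear estimates and fixed point.} I would set up a Picard iteration on the closed ball of radius $C\de$ in $X_{q,p}$. The nonlinearities $\Div(au)$, $u\cdot\nabla u$, $I(a)\mathcal{A}u$, $k(a)\nabla a$ (and the corresponding source terms for the momentum formulation) are controlled by Bony paraproduct decompositions plus the composition estimate for smooth functions vanishing at $0$ in critical Besov spaces. Interaction terms that mix low- and high-frequency factors are controlled via the Besov embedding $\dot B^{s_q}_{q,1}\hookrightarrow \dot B^{s_p}_{p,1}$ valid under $q\leq p$ and $s_q-3/q=s_p-3/p$; the precise conditions \eqref{condition} and $p\leq 2q$ encode both this embedding and the requirement that the ``$(q,p)$-gap'' at the threshold frequency be absorbable by the linear gain in the heat part. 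Once the bilinear and trilinear estimates close at $\lec \de^2$, standard contraction in $X_{q,p}$ yields a unique solution with the stated norm.

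\textbf{Main obstacle, uniqueness, continuity.} The hard part, and the reason the previous literature was stuck at $p<4$, is closing the estimate of $I(a)\mathcal{A}u$ and $u\cdot\nabla u$ at the high-frequency $\dot B^{-1+3/p}_{p,1}\cap \wt L^1_t\dot B^{1+3/p}_{p,1}$ level when $p$ is close to $6$: the algebra property of $\dot B^{3/p}_{p,1}$ holds for $p<\infty$, but combining it with the derivative loss from $\mathcal{A}u$ and the transfer of low-frequency regularity through the threshold requires precisely the improved low-frequency control coming from the momentum formulation---this is why the low-frequency smallness is imposed on $(a_0,m_0)$ rather than $(a_0,u_0)$, and why a pure heat-semigroup low-frequency analysis fails at $p\geq 4$. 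Uniqueness follows by running the same estimates on the difference of two solutions in a weaker norm (losing one derivative at low frequency is harmless since we have room in \eqref{condition}). Finally, continuity of $S_T:E_\de\to C_T\X_p$ is obtained by a frequency envelope argument in the style of \cite{GYZ2024}, upgrading the $X_{q,p}$-Lipschitz bound for the iterates to genuine continuity of the solution map in the $\X_p$-topology.
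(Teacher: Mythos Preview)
Your overall architecture --- momentum formulation $(a,m)$ for the low-frequency block, effective velocity for the high-frequency block, paraproduct nonlinear estimates, continuity via frequency envelopes --- matches the paper. But your low-frequency linear estimate is wrong in a way that hides the paper's main technical point. You write
\[
\|(a,m)^\ell\|_{\wt L^r_t(\dot B^{-3+7/q+2/r}_{q,1})}\lec \|(a_0,m_0)^\ell\|_{\dot B^{-3+7/q}_{q,1}}+\text{source},
\]
which is the pure heat gain $2/r$ and is correct only for $q=2$. For $q>2$ the low-frequency symbol satisfies $\la_\pm(\x)\approx -\tfrac12|\x|^2\pm i|\x|$, so the semigroup factors as heat times a half-wave, and $e^{\pm it|D|}$ is \emph{not} bounded on $L^q$ for $q\neq 2$ without derivative loss (Peral--Miyachi). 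The sharp localized bound is
\[
\|e^{t\la_\pm(D)}\dot{\Delta}_k f\|_{L^q}\lec e^{-ct2^{2k}}\,2^{-2k|1/2-1/q|}\|\dot{\Delta}_k f\|_{L^q},\qquad k\le 0,
\]
which translates into output regularity $s+|1-2/q|+2/r$ from input $s$. For $q>2$ the shift $1-2/q$ is exactly what places the output in the solution space $X_{q,p}$ (e.g.\ $\wt L^1$ at $5/q$, $\wt L^2$ at $-1+5/q$) starting from data at $-3+7/q$; your stronger claim without the shift is false.

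You also locate the $p<4$ obstruction in the wrong place. The high-frequency nonlinear estimates close for all $p<6$ once the effective velocity is used; the barrier is the \emph{low-frequency output of the $\text{high}\times\text{high}$ interaction}, i.e.\ bounding $\|(f^h g^h)^\ell\|_{L^q}$ from two $L^p$ factors, which forces $p\le 2q$ (hence $p\le 4$ in the old $q=2$ framework). The momentum formulation helps not by ``improved low-frequency control'' of the solution but because the nonlinearity $h(a,m)$ is entirely in divergence/gradient form, $h=\nabla(\tilde h)$, which gains one derivative at low frequency and lets the source sit in $\dot B^{-3+7/q}_{q,1}$ rather than $\dot B^{-2+7/q}_{q,1}$; this is what your sketch does not make explicit. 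A minor methodological difference: the paper does not run a contraction in $X_{q,p}$, but instead derives the a~priori bound $\mathcal X\lec\mathcal X_0+\mathcal X^2$, invokes the existing local $\X_p$-theory, checks separately that $\|(a,m)^\ell\|_{\dot B^{-3+7/q}_{q,1}}$ propagates locally in time, and bootstraps.
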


\begin{remark}
We will show in Proposition~\ref{momentum} that, $m^\ell$ and $u^\ell$ have equivalent norms in the resolution space $X_{q,p}$. In particular when \( q = 2 \), \(\|(a_0, m_0)^{\ell}\|_{\dot{B}_{q,1}^{-3 + 7/q}}\sim \|(a_0, u_0)^{\ell}\|_{\dot{B}_{q,1}^{-3 + 7/q}}\). Thus, by taking $q=2$, Theorem \ref{gwp} covers the previous results obtained in \cite{FD2010, CMZ2010, H20112}. By taking $q\geq 3$, Theorem \ref{gwp} obtains the full range \(  p < 6 \) for high frequency.
\end{remark}

\begin{remark}
Theorem~\ref{gwp} applies to initial data $(a_{0},u_{0})\in \X_{p}$ considered  in \cite{D2014}. 
We provide the following examples:
\begin{enumerate}
    \item $\rho_0 = 1$, $m_0 = u_0$ with $u_0^\ell \in \dot{B}_{q,1}^{-3 + \frac{7}{q}}$;
    \item $a_0 = \mathcal{F}^{-1}(\chi(\xi))$, $u_0 = 
\mathcal{F}^{-1}\left(\psi\big(\tfrac{\xi}{N}\big)\right)$ for $N \gg 1$.
\end{enumerate}
One readily verifies that $m^\ell_0 \in \dot{B}_{q,1}^{-3 + \frac{7}{q}}$ 
whereas  $(a_{0},u_{0})\in \X_{p}$
\end{remark}
\begin{remark}\label{rk:highosci}
Let $p,q$ satisfy the conditions of Theorem~\ref{gwp} and $p>3$, our result yields the global solution of \eqref{CNS}
 for highly oscillatory initial velocity.  
 Let \( \phi \) be a Schwartz function whose Fourier transform \( \hat{\phi} \) is compactly supported, i.e., there exists \( M > 0 \) such that \( \hat{\phi}(\xi) = 0 \) for all \( |\xi| \geq M \). Define
\EQ{
\varphi_\varepsilon(x) = \cos\left(\frac{x_3}{\varepsilon}\right) \phi(x),
}
and set
\EQ{
u_{0,\varepsilon}(x) = (\partial_2 \varphi_\varepsilon(x), -\partial_1 \varphi_\varepsilon(x), 0), \quad a_{0,\varepsilon}(x) = 0.
}
Since \( \hat{\phi} \) is supported in \( |\xi| \leq M \), the support of \( \hat{\varphi_\varepsilon} \) lies in the region where \( |(\xi_1, \xi_2, \xi_3 \pm 1/\varepsilon)| \leq M \), which implies \( |\xi| \geq 1/\varepsilon - M \). Fix a low-frequency cutoff level \( J_0 \) and let \( R = 2^{J_0+1} \). For \( \varepsilon < 1/(R + M) \), we have \( 1/\varepsilon - M > R \), so \( \widehat{u_{0,\varepsilon}}(\xi) = 0 \) for all \( |\xi| \leq R \). Consequently, for all \( j \leq J_0 \), the Littlewood-Paley projections satisfy \(  \dot{\Delta}_{j} u_{0,\varepsilon} = 0 \), and thus the low-frequency Besov norm vanishes, that is
\[
\| u_{0,\varepsilon}^\ell \|_{\dot{B}_{q,1}^{-3+7/q}}  = 0.
\]
For the high-frequency part, since the Fourier transform of \( u_{0,\varepsilon} \) is concentrated near \( |\xi| \sim 1/\varepsilon \), we have (after normalizing \( \phi \) appropriately)
\[
\| u_{0,\varepsilon}^h \|_{\dot{B}_{p,1}^{-1+3/p}} \sim  \varepsilon^{1-3/p}.
\]
Therefore, for  \( 3 < p  \), and for sufficiently small \( \varepsilon \), the initial data  satisfies
\[
\| (a_{0,\varepsilon}, u_{0,\varepsilon})^\ell \|_{\dot{B}_{q,1}^{-3+7/q}} + \| a_{0,\varepsilon}^h \|_{\dot{B}_{p,1}^{3/p}} + \| u_{0,\varepsilon}^h \|_{\dot{B}_{p,1}^{-1+3/p}} \lesssim \varepsilon^{1-3/p},
\]
and hence generates a unique global solution to \eqref{CNS}.
\end{remark}

In the rest of the introduction, we would like to discuss the main ingredients in the proof of Theorem \ref{gwp}. Following \cite{FD2010,CMZ2010}, we consider the linearized compressible Navier-Stokes equation
\begin{equation}\label{eq:linearCNS}
\left\{
\begin{aligned}
& \partial_t a+\Div u=f,\\
&  \partial_{t}u-\mathcal{A}u+ \nabla a=g.
\end{aligned}
\right.
\end{equation}
For local well-posedness, the linear terms $\Div u$ and $\nabla a$ can be considered as perturbations and then merged to nonlinearity. However, we couldn't treat them as global-in-time perturbations. 
Applying the Fourier transform yields the explicit solution $(a,u)$ to \eqref{eq:linearCNS}, we have
\begin{align}\label{linear solution}
\begin{pmatrix} a \\ u \end{pmatrix} 
&= G(t)\begin{pmatrix} a_{0} \\ u_{0} \end{pmatrix} 
+ \int_{0}^{t} G(t-s)\begin{pmatrix} f \\ g \end{pmatrix}  \mathrm{d}s,
\end{align}
where $G(t)$ is the Fourier multiplier operator defined by the symbol (cf.~\cite{FD2010,CMZ2010})
\begin{equation}\label{Gtsymbol}
\widehat{G}(\xi,t) \triangleq 
\begin{pmatrix}
\dfrac{\lambda_{+}e^{\lambda_{-}t} - \lambda_{-}e^{\lambda_{+}t}}{\lambda_{+}-\lambda_{-}} 
& i\dfrac{e^{\lambda_{-}t} - e^{\lambda_{+}t}}{\lambda_{+}-\lambda_{-}}\xi^\top \\[2ex]
-i\dfrac{e^{\lambda_{-}t} - e^{\lambda_{+}t}}{\lambda_{+}-\lambda_{-}}\xi 
& \dfrac{\lambda_{+}e^{\lambda_{+}t} - \lambda_{-}e^{\lambda_{-}t}}{\lambda_{+}-\lambda_{-}}\dfrac{\xi\xi^\top}{|\xi|^2}
\end{pmatrix}
\end{equation}
with eigenvalues
\EQ{
\lambda_{\pm}=\lambda_\pm(|\xi|) = -\frac{1}{2}|\xi|^2 \pm \frac{1}{2}\sqrt{|\xi|^4 - 4|\xi|^2}.
}
One can see that, for high-frequency $|\xi|>2$, the semigroup $e^{t\lambda_\pm(D)}$ is parabolic, and 
\EQ{
\lambda_+\sim -|\xi|^2,\quad \lambda_-\sim -1, \quad \mbox{as}\quad |\xi|\to \infty.
}
For low frequency $|\xi|<2$, the semigroup $e^{t\lambda_\pm(D)}$ is parabolic-dispersive,
\EQ{
\lambda_\pm(\rho)=-\frac{\rho^2}{2}(1\pm iS(\rho)), \qquad S(\rho)=\sqrt{\frac{4}{\rho^2}-1}. 
}
Note that in the transition frequency region $||\xi|-2|\ll 1$, $G(t)$ behaves essentially like $e^{Ct\Delta}$.
The dispersive part affects the property of the semigroup $e^{t\lambda_\pm(D)}$ at low frequency. In the $L^2$-framework, by the Plancherel equality it has little impacts. All the previous works \cite{D2000,FD2010,CMZ2010,H20112} used $L^2$-based Besov spaces for low frequency. The energy method used in \cite{D2000} also relies on the $L^2$-based spaces to achieve some cancellations. However, the trade-off is in handling $high\times high \to low$ nonlinear estimates:
\EQ{\nonumber
\norm{P_{low}(f_{high}\cdot g_{high})}_{L^2}\les \norm{f_{high}}_{L^p}\cdot \norm{g_{high}}_{L^p}
}
which results in the restriction $p\leq 4$ for high-frequency. This is exactly the reason why this restriction is needed in the previous works. Our ideas compromise 

{\bf Idea 1.} Using $L^q$-framework for the low frequency.

In Section 3, we derive some estimates for general parabolic-dispersive semigroups. We tracked the impact of the dispersive part. In particular, we found that for low frequency
\EQ{
e^{t\lambda_\pm (D)}\approx e^{ct\Delta+iCtD}
}
Using the estimate for the wave operator $e^{itD}$, we prove for $1\leq q\leq \infty, k\leq 0$
\EQ{
\|e^{t\lambda_\pm (D)}\dot{P}_{k}f\|_{L^{q}(\R^d)}\lesssim e^{-ct 2^{2k}}2^{-(d-1)k|\frac{1}{2}-\frac{1}{q}|}\|\dot{P}_{k}f\|_{L^{q}(\R^d)},
}
from which we derive low frequency estimates for \eqref{eq:linearCNS} in $d=3$
\begin{equation}\label{eq:lowfrequency estimate}
\begin{aligned}
&\|(a,u) ^{\ell}\|_{\tilde{L}_{t}^{\infty}(\dot{B}_{q,1}^{s+|1-\frac{2}{q}|})\cap \tilde L_{t}^{1}(\dot{B}_{q,1}^{s+|1-\frac{2}{q}|+2})}
\les \|(a_{0},u_{0})^{\ell}
\|_{\dot{B}_{q,1}^{s}}+\|(f,g)^{\ell}\|_{\tilde L_{t}^{1}(\dot{B}_{q,1
}^{s})}.
\end{aligned}
\end{equation}
When $q=2$, it returns to the classical estimates. When $q>2$, it loses regularities on low-frequency. This loss will also affect $high\times high \to low$ nonlinear estimates. If only using \eqref{eq:lowfrequency estimate} and taking $q=p/2$, we will require $p<5$ eventually. To obtain the full range $p<6$, it requires our second idea. 

{\bf Idea 2.} Momentum formulation for low frequency.

Our ideas are inspired by the classical works of \cite{HoZu1995,HoZu1997}. To resolve the loss of regularities on low frequency, we use the momentum formulation. 
We define the momentum $m=\rho u$. Then the system for $(a,m)$ reads
\begin{equation*}
\left\{
\begin{aligned}
& \partial_t a + \mathrm{div}\, m = 0, \\
& \partial_t m - \mathcal{A} m +  \nabla a = h(a, m),
\end{aligned}
\right.
\end{equation*}
where $h=\nabla (\tilde h)$ has some null structures and hence improves $high\times high \to low$ interaction. This explains why we require that the initial momentum satisfies an additional low frequency condition in Theorem \ref{gwp}. 

We only use the momentum formulation for low frequency, as for high frequency we still use the effective velocity methods as in \cite{H20111, H20112}. Roughly speaking, we perform a nonlinear transform
\EQ{
(a,u)\to J(a,u):=(a,\tilde u)
}
where $\tilde u:=[(a+1)u]^{\ell}+u^h=(au)^{\ell}+u$, and do analysis for $(a,\tilde u)$. We can show the transform $J$ and its inverse is continuous in the resolution space $X_{q,p}$ (see Proposition~\ref{momentum}) under some smallness conditions. 

\begin{remark}
Our methods also work for higher dimensions and could improve the previous results. However, the extra loss of regularity index is $(d-1)(\frac{1}{2}-\frac{1}{q})$ in \eqref{eq:lowfrequency estimate} which increases as dimension increases, while the gain of regularity remains the same in the momentum formulation. It seems to us that new ideas are needed to obtain the optimal range $p<2d$. We hope to address this in a forthcoming work. 
\end{remark}

\section{Preliminaries} \label{mainestimate}

In this section, we collect some notations and present several lemmas that will be utilized subsequently. The Fourier transform of $f \in \mathcal{S}$ (the Schwartz class) is defined as
\[
\widehat{f}(\xi) = \mathcal{F}[f](\xi) :=(2\pi)^{-d/2} \int_{\mathbb{R}^{d}} f(x) e^{-i \xi \cdot x}  dx.
\]
$A \lesssim B$ means that $A \leq CB$ for some constant $C$. $A\sim B$ means $A\les B$ and $B\les A$.

Choose a real-valued $C_0^\infty(\R^n)$ function $\eta(\xi)$ such that: 
\begin{itemize}
    \item[(1)] $\eta$ is a radially symmetric and radially decreasing;
    \item[(2)] $\supp \eta\subset \{\xi\in \R^n:|\xi|\leq 1.01\}$; 
    \item[(3)] $0\leq \eta(\xi)\leq
1$, $\eta(\xi)\equiv 1$ if $|\xi|\leq 1$.
\end{itemize}
Let $\psi(\xi)=\eta(\xi/2)-\eta(\xi)$. For $k\in \Z$, we define
$\psi_k(\xi)=\psi(2^{-k}\xi)$ and
\EQ{\nonumber \eta_k(\xi)=
\CAS{
\psi_k(\xi), &\mbox{ if } k\geq 1;\\
\eta(\xi), &\mbox{ if } k=0;\\
0, &\mbox{ if } k\leq -1.
}
}
Then we define the Littlewood-Paley projectors:
for $k\in \Z$
\begin{align*}
\wh{\dot{P}_kf}(\xi)=\psi_k(\xi)\hat{f},\,\wh{{P}_kf}(\xi)=\eta_k(\xi)\hat{f},
\, \wh{{P}_{\leq k}f}(\xi)=\eta(2^{-k}\xi)\hat{f}, \, P_{\geq k}=I-P_{\leq k-1}.
\end{align*}
We also write $\dot \Delta_j=\dot P_j$, $\dot S_j=P_{\leq j-1}$.

Throughout this paper, for \( z \in \mathcal{S}'(\mathbb{R}^d) \), we use $z^{\ell}$, $z^h$ to denote the low-frequency and high-frequency component respectively, namely
\begin{equation}\label{hilo}
z^{\ell}:=\dot{S}_{k_{0}}z \quad \text{and} \quad 
z^{h}:=z-z^{\ell},
\end{equation}
where $k_0$ is a universal parameter to be determined later. $(a,u)^{\ell}$ (resp $(a,u)^h$) means $(a^{\ell},u^{\ell})$ (resp $(a^h,u^h)$).

Denote by $\mathcal{S}'_{0}:=\mathcal{S'}/\mathcal{P}$ the tempered distributions modulo polynomials $\mathcal{P}$. For $s\in \mathbb{R}$ and $1\leq p,r\leq \infty$, the homogeneous Besov spaces $\dot{B}^s_{p,r}$ are defined by
$$\dot{B}^s_{p,r}:=\Big\{f\in \mathcal{S}'_{0}:\|f\|_{\dot{B}^s_{p,r}}<\infty  \Big\} ,$$
where
\begin{equation*}
\|f\|_{\dot{B}^s_{p,r}}:=\Big(\sum_{q\in\mathbb{Z}}(2^{qs}\|\dot{\Delta}_qf\|_{L^{p}})^{r}\Big)^{1/r}
\end{equation*}
with the usual convention if $r=\infty$. We will also need the Hardy space $\Hl^1(\R^d)$ which is the Banach space with the following norm
\EQ{\label{Hardyspace}
\norm{f}_{\Hl^1}:=\normb{(\sum_k |\dot \Delta_k f|^2)^{1/2}}_{L^1}.
}
We often use the following classical properties of Besov spaces (see \cite{BaChDa11}).

$\bullet$ \ \emph{Interpolation:} The following inequality is satisfied for $1\leq p,r_{1},r_{2}, r\leq \infty, \sigma_{1}\neq \sigma_{2}$ and $\theta \in (0,1)$:
$$\|f\|_{\dot{B}_{p,r}^{\theta \sigma_{1}+(1-\theta )\sigma_{2}}}\lesssim \|f\| _{\dot{B}_{p,r_{1}}^{\sigma_{1}}}^{\theta} \|f\|_{\dot{B}_{p,r_2}^{\sigma_{2}}}^{1-\theta }$$
with $\frac{1}{r}=\frac{\theta}{r_{1}}+\frac{1-\theta}{r_{2}}$.

$\bullet$ \ \emph{Embedding:} For any $p\in[1,\infty]$ we have the continuous embedding $\dot {B}^{0}_{p,1}\hookrightarrow L^{p}\hookrightarrow \dot{B}^{0}_{p,\infty}$.

$\bullet$ \ If $\sigma\in \R$, $1\leq p_{1}\leq p_{2}\leq\infty$ and $1\leq r_{1}\leq r_{2}\leq\infty,$ then $\dot {B}^{\sigma}_{p_1,r_1}\hookrightarrow
\dot {B}^{\sigma-d\,(\frac{1}{p_{1}}-\frac{1}{p_{2}})}_{p_{2},r_{2}}$.

$\bullet$ \ The space $\dot {B}^{\frac {d}{p}}_{p,1}$ is continuously embedded in the set of bounded  continuous functions (going to zero at infinity if, additionally, $p<\infty$).

$\bullet$ \ \emph{Bernstein inequality:} 
\begin{equation*}\label{Eq:2.6}
\|D^{k}f\|_{L^{b}}
\les \lambda^{k+d(\frac{1}{a}-\frac{1}{b})}\|f\|_{L^{a}}
\end{equation*}
holds for all functions $f$ such that $\mathrm{Supp}\,\mathcal{F}f\subset\left\{\xi\in \R^{d}: |\xi|\leq \lambda\right\}$ for $\lambda>0$, if $k\in\mathbb{N}$ and $1\leq a\leq b\leq\infty$.

We denote the function spaces ${L^{q}_{T}(\dot{B}_{p,r}^{s})}$ and ${\bar{L}^{q}_{T}(\dot{B}_{p,r}^{s})}$ to be Banach spaces with the following norms:
\EQN{
\|u(t,\cdot)\|_{L^{q}_{T}(\dot{B}_{p,r}^{s})}=&\|\|u(t,\cdot)\|_{\dot{B}_{p,r}^{s}}
\|_{L^{q}_{T}},\\
\|u(t,\cdot)\|_{\bar{L}^{q}_{T}(\dot{B}_{p,r}^{s})} =& \big\| \big( 2^{js} \|\dot \Delta_{j}u(t,\cdot)\|_{L_{T}^{q}(L^{p}) \big) \big\|_{\ell^{r}(\mathbb{Z})}.
}}
Note that by Minkowski's inequality, for $r \leq q$ we have
\[
\|u(t,\cdot)\|_{L^{q}_{T}(\dot{B}_{p,r}^{s})} \leq \|u(t,\cdot)\|_{\tilde{L}^{q}_{T}(\dot{B}_{p,r}^{s})},
\]
with equality if and only if $q = r$. The opposite inequality holds when $r \geq q$.
The index $T$ will be omitted when $T = +\infty$. We denote by $\mathcal{C}_{b}(\dot{B}_{p,r}^{s})$ the subset of functions in ${L}^{\infty}(\dot{B}_{p,r}^{s})$ that are also continuous from $\mathbb{R}_{+}$ to $\dot{B}_{p,r}^{s}$. 

Product estimates in Besov spaces play a fundamental role in bounding bilinear terms. We first recall the well-known Bony's decomposition: for any distributions $f$ and $g$,
\[
fg = {T}_{f}g + {R}(f,g) + {T}_{g}f,
\]
where the paraproduct ${T}_f g$ and remainder ${R}(f,g)$ are defined as follows:
\[
{T}_{f}g \triangleq \sum_{j' \in \mathbb{Z}} \dot{S}_{j'-1}f \, \dot{\Delta}_{j'}g, \qquad 
{R}(f,g) \triangleq \sum_{j' \in \mathbb{Z}} \tilde{\dot{\Delta}}_{j'}f \, \dot{\Delta}_{j'}g.
\]
Here, $\tilde{\dot{\Delta}}_{j'} \triangleq \sum_{|j' - k| \leq 1} \dot{\Delta}_{k}$.

\begin{lemma}[Proposition 1.4 and Proposition 1.6, \cite{D2005}]\label{prodlemma}
Let \( s > 0 \) and \( 1 \leq p, r \leq \infty \). Then the space \( \dot{B}^{s}_{p,r} \cap L^{\infty} \) is an algebra, and we have the estimate
\[
\| f g \|_{\dot{B}^{s}_{p,r}} \lesssim \| f \|_{L^{\infty}} \| g \|_{\dot{B}^{s}_{p,r}} + \| g \|_{L^{\infty}} \| f \|_{\dot{B}^{s}_{p,r}}.
\]
If \( s_1, s_2 \leq \frac{d}{p} \) with \( s_1 + s_2 > d \max\left\{0, \frac{2}{p} - 1\right\} \), then
\[
\| a b \|_{\dot{B}^{s_1 + s_2 - \frac{d}{p}}_{p,1}} \lesssim \| a \|_{\dot{B}^{s_1}_{p,1}} \| b \|_{\dot{B}^{s_2}_{p,1}}.
\]
If \( s_1 \leq \frac{d}{p} \), \( s_2 < \frac{d}{p} \), and \( s_1 + s_2 \geq d \max\left\{0, \frac{2}{p} - 1\right\} \), then
\[
\| a b \|_{\dot{B}^{s_1 + s_2 - \frac{d}{p}}_{p,\infty}} \lesssim \| a \|_{\dot{B}^{s_1}_{p,1}} \| b \|_{\dot{B}^{s_2}_{p,\infty}}.
\]
\end{lemma}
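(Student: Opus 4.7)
The plan is to prove all three estimates through Bony's decomposition
\[
fg = T_f g + R(f,g) + T_g f
\]
and to bound each piece using the Fourier-support properties of Littlewood-Paley blocks together with Bernstein's inequality. Writing $b_j := 2^{j s_1}\|\dot\Delta_j f\|_{L^p}$ and $c_j := 2^{j s_2}\|\dot\Delta_j g\|_{L^p}$ (with $s_1=s_2=s$ in the algebra case), each estimate reduces to a dyadic convolution inequality on the sequences $(b_j),(c_j)$ that closes by Young's inequality in $\ell^r$.

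For the algebra estimate, each block $\dot\Delta_{j'}(T_f g)$ is Fourier-localized in an annulus of size $2^{j'}$, so $\dot\Delta_j(T_f g)$ picks up only $|j'-j|\leq N$ and, using $\|\dot S_{j'-1}f\|_{L^\infty}\leq\|f\|_{L^\infty}$,
\[
\|\dot\Delta_j T_f g\|_{L^p}\lesssim\|f\|_{L^\infty}\sum_{|j'-j|\leq N}\|\dot\Delta_{j'} g\|_{L^p},
\]
which yields $\|T_f g\|_{\dot{B}^s_{p,r}}\lesssim\|f\|_{L^\infty}\|g\|_{\dot{B}^s_{p,r}}$; swapping $f$ and $g$ gives the dual bound on $T_g f$. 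The remainder $R(f,g)=\sum_{j'}\tilde{\dot\Delta}_{j'} f\,\dot\Delta_{j'} g$ has each summand Fourier-supported in a ball of size $2^{j'}$, so only $j'\geq j-N$ enter $\dot\Delta_j R(f,g)$; bounding one factor in $L^\infty$ and exploiting $s>0$ produces the geometric weight $2^{(j-j')s}$ that restores summability and yields $\|R(f,g)\|_{\dot{B}^s_{p,r}}\lesssim\|f\|_{L^\infty}\|g\|_{\dot{B}^s_{p,r}}$.

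For the refined estimates, the $L^\infty$ bound on $\dot S_{j'-1} f$ is replaced by Bernstein,
\[
\|\dot S_{j'-1}f\|_{L^\infty}\lesssim\sum_{k<j'-1}2^{kd/p}\|\dot\Delta_k f\|_{L^p},
\]
which under $s_1\leq d/p$ sums to at most $C\,2^{j'(d/p-s_1)}\|f\|_{\dot B^{s_1}_{p,1}}$ (for the $\dot B^{s_1}_{p,\infty}$ analog, strict $s_1<d/p$ is needed for the geometric series to converge). Propagating this into $T_f g$ produces the target gain $2^{-j(s_1+s_2-d/p)}$; the companion paraproduct $T_g f$ is handled by symmetry, and the borderline case $s_1=d/p$ in the second estimate is legitimized by the embedding $\dot B^{d/p}_{p,1}\hookrightarrow L^\infty$.

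The main obstacle is the remainder $R(f,g)$, because its summands have Fourier support in balls rather than annuli, so the geometric decay in $j-j'$ has to come entirely from the regularity budget. The key step is Bernstein $L^{p/2}\to L^p$ (valid for $p\geq 2$, costing $2^{jd/p}$) applied after Hölder $\|\tilde{\dot\Delta}_{j'} f\cdot\dot\Delta_{j'} g\|_{L^{p/2}}\leq\|\tilde{\dot\Delta}_{j'} f\|_{L^p}\|\dot\Delta_{j'} g\|_{L^p}$; for $p<2$ one uses a more delicate variant that does Bernstein on a single factor before Hölder. Collecting exponents, one arrives at
\[
2^{j(s_1+s_2-d/p)}\|\dot\Delta_j R(f,g)\|_{L^p}\lesssim\sum_{j'\geq j-N}2^{(j-j')\varepsilon}\,\tilde b_{j'}\tilde c_{j'},\qquad \varepsilon:=s_1+s_2-d\max\{0,\tfrac{2}{p}-1\},
\]
so that $\varepsilon>0$ is exactly the condition required for Young in $\ell^1$ to close the second estimate. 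For the $\dot B^{\cdot}_{p,\infty}$ target of the third estimate only per-scale boundedness is needed, so $\varepsilon\geq 0$ suffices, which matches the non-strict hypothesis; the strict requirement $s_2<d/p$ there is precisely the price for summing the Bernstein series when $g$ is only controlled in $\dot B^{s_2}_{p,\infty}$.
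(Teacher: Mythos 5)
Your proof is correct and follows the standard Bony-decomposition argument (paraproducts controlled via $\|\dot S_{j'-1}f\|_{L^\infty}$ or Bernstein, remainder via H\"older plus Bernstein on ball-supported blocks, then Young in $\ell^r$), which is exactly the route of the cited source and of the paper's own Appendix~A (Lemma~\ref{lem key}); the paper itself does not prove Lemma~\ref{prodlemma} but quotes it from \cite{D2005}. Your accounting of the borderline cases — $\ell^1$ summation rescuing $s_1=d/p$, strictness of $s_2<d/p$ for the $\ell^\infty$ paraproduct, and $\varepsilon>0$ versus $\varepsilon\geq 0$ for the remainder — is accurate.
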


\begin{lemma}[Proposition A.3, \cite{DX2017}]
\label{linestimate} Let \( F : \mathbb{R} \to \mathbb{R} \) be a smooth function with \( F(0) = 0 \). For all \( 1 \leq p, r \leq \infty \) and \( \sigma > 0 \), if \( f \in \dot{B}_{p,r}^{\sigma} \cap L^\infty \), then \( F(f) \in \dot{B}_{p,r}^{\sigma} \cap L^\infty \), and 
\begin{equation*}
\|F(f)\|_{\dot{B}_{p,r}^{\sigma}} \leq C \|f\|_{\dot{B}_{p,r}^{\sigma}}
\end{equation*}
where the constant \( C \) depends only on \(\|f\|_{L^\infty}\), \(F'\) (and higher-order derivatives), 
\(\sigma\), \(p\), and the dimension \(d\).  
\noindent If \(\sigma > -\min\left(\frac{d}{p}, \frac{d}{p'}\right)\) (where \(p'\) is the conjugate exponent satisfying \(1/p + 1/p' = 1\)), then \(f \in \dot{B}_{p,r}^{\sigma} \cap \dot{B}_{p,1}^{\frac{d}{p}}\) implies \(F(f) \in \dot{B}_{p,r}^{\sigma} \cap \dot{B}_{p,1}^{\frac{d}{p}}\), and 
\begin{equation*}
\|F(f)\|_{\dot{B}_{p,r}^{\sigma}} \leq C(1 +
\|f\|_{\dot{B}_{p,1}^{\frac{d}{p}}}) \|f\|_{\dot{B}_{p,r}^{\sigma}}.
\end{equation*}
\end{lemma}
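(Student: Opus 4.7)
The plan is to derive both parts from the telescoping identity
\[
F(f) = \sum_{j \in \Z} \bigl[F(\dot S_{j+1} f) - F(\dot S_j f)\bigr] = \sum_{j \in \Z} m_j \cdot \dot\Delta_j f, \quad m_j := \int_0^1 F'\bigl(\dot S_j f + \tau \dot\Delta_j f\bigr) \, d\tau,
\]
whose convergence in $\cS'_0$ follows from $F(0)=0$ together with $f \in L^\infty$ (automatic in part two via the embedding $\dot{B}^{d/p}_{p,1} \hookrightarrow L^\infty$). An application of Fa\`a di Bruno and Bernstein's inequality yields $\|\partial^\alpha m_j\|_{L^\infty} \lesssim 2^{j|\alpha|}$ uniformly in $j$, with constants depending only on $\|f\|_{L^\infty}$ and finitely many derivatives of $F$.

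For part one ($\sigma > 0$), I would apply $\dot\Delta_k$ and split the sum at $j = k$. When $j \geq k$, I bound $\|\dot\Delta_k(m_j \dot\Delta_j f)\|_{L^p} \leq \|m_j\|_{L^\infty}\|\dot\Delta_j f\|_{L^p}$; when $j < k$, I combine Bernstein with the derivative bounds on $m_j$ to extract a factor $2^{-N(k-j)}$ for any chosen $N$. Multiplying by $2^{k\sigma}$ and taking the $\ell^r_k$-norm, the resulting convolution in $j-k$ is controlled by Young's inequality, and the hypothesis $\sigma > 0$ renders both tails summable, giving $\|F(f)\|_{\dot{B}^\sigma_{p,r}} \lesssim \|f\|_{\dot{B}^\sigma_{p,r}}$ with constant depending only on $\|f\|_{L^\infty}$.

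For part two ($\sigma > -\min(d/p, d/p')$), an $L^\infty$ bound on $m_j$ no longer suffices as $\sigma$ may be negative and $\|\dot\Delta_j f\|_{L^p}$ need not decay as $j \to -\infty$. I would apply Bony's decomposition
\[
m_j \dot\Delta_j f = T_{m_j}\dot\Delta_j f + T_{\dot\Delta_j f}m_j + R(m_j, \dot\Delta_j f)
\]
and invoke the product estimates of Lemma \ref{prodlemma} with exponents $s_1 = d/p$ on $m_j$ and $s_2 = \sigma$ on $\dot\Delta_j f$. The required control $\|m_j\|_{\dot{B}^{d/p}_{p,1}} \lesssim C(\|f\|_{L^\infty})(1 + \|f\|_{\dot{B}^{d/p}_{p,1}})$ is obtained by invoking part one (already proved) on $F'$ composed with the argument of $m_j$, uniformly in $\tau \in [0,1]$ and $j$. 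The threshold $\sigma > -\min(d/p, d/p')$ matches precisely the admissibility condition $s_1 + s_2 > d\max(0, 2/p-1)$ in the remainder estimate of Lemma \ref{prodlemma}.

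The main obstacle I expect is handling the remainder $R(m_j, \dot\Delta_j f)$ at the borderline regularity, where a direct application of Lemma \ref{prodlemma} only provides control in $\dot{B}^\sigma_{p,\infty}$; recovering the full $\ell^r$-summability inherent in $\dot{B}^\sigma_{p,r}$ requires extracting an extra geometric decay from the spectral mismatch between $m_j$ (whose high-frequency components are suppressed by the derivative bounds) and $\dot\Delta_j f$ (localized at scale $2^j$). A secondary but routine point is verifying convergence of the telescoping modulo polynomials, which reduces to $F(\dot S_{-N} f) \to 0$ in $\cS'_0$ as $N \to \infty$, a consequence of $F(0)=0$ and the continuity of $F$ on the range of $f \in L^\infty$.
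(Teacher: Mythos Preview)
The paper does not prove this lemma; it is quoted verbatim from \cite{DX2017} (Proposition~A.3) and used as a black box. So there is no ``paper's own proof'' to compare against, only the standard argument behind the cited result.

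Your treatment of part one is exactly Meyer's first linearization (as in \cite{BaChDa11}, Theorem~2.61) and is correct.

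For part two, your plan has a real gap in the summation step. If you apply Lemma~\ref{prodlemma} termwise to $m_j\,\dot\Delta_j f$ you obtain
\[
\|m_j\,\dot\Delta_j f\|_{\dot B^{\sigma}_{p,r}}\lesssim (1+\|f\|_{\dot B^{d/p}_{p,1}})\,2^{j\sigma}\|\dot\Delta_j f\|_{L^p},
\]
and summing in $j$ forces you into $\dot B^{\sigma}_{p,1}$, not $\dot B^{\sigma}_{p,r}$. You flag this (``recovering the full $\ell^r$-summability'') but the proposed fix via ``spectral mismatch'' is vague: the product $m_j\,\dot\Delta_j f$ genuinely spreads across all frequencies $\lesssim 2^j$, and there is no free geometric decay in $k-j$ to exploit once $\sigma\le 0$. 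The standard route (and the one in \cite{DX2017}, \cite{BaChDa11}) bypasses the telescoping entirely for part two: since $F(0)=0$, write $F(f)=f\cdot H(f)$ with $H(s)=F(s)/s$ smooth and $H(0)=F'(0)$; apply part one to $H-H(0)$ to get $\|H(f)-H(0)\|_{\dot B^{d/p}_{p,1}}\lesssim \|f\|_{\dot B^{d/p}_{p,1}}$; then a \emph{single} application of Lemma~\ref{prodlemma} with $s_1=d/p$, $s_2=\sigma$ gives $\|f(H(f)-H(0))\|_{\dot B^{\sigma}_{p,r}}\lesssim \|f\|_{\dot B^{d/p}_{p,1}}\|f\|_{\dot B^{\sigma}_{p,r}}$, and the constant piece $F'(0)f$ is trivial. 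This avoids the infinite sum and preserves $r$ for free.
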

\begin{remark}
To prove our main results when \( d \geq 2 \) and \( 1 < p < 2d \), we will use the above lemma with \( \sigma = \frac{d}{p} \) or \( \sigma = \frac{d}{p} - 1 \).
\end{remark}

\begin{lemma}[Proposition 2.1, \cite{D2014}]\label{heatm}
Let \(T>0\), \(\mu>0\), \(2\mu+\lambda>0\), \(s\in\mathbb{R}\), \(1\leq p\leq\infty\) and \(1\leq\varrho_{2}\leq\varrho_{1}\leq\infty\). 
Assume that \(u_{0}\in\dot{B}_{p,1}^{s}\) and \(f\in\tilde{L}^{\varrho_{2}}(0,T;\dot{B}_{p,1}^{s-2+\frac{2}{\varrho_{2}}})\) hold. If \(u\) is a solution of
\case{
\partial_{t}u-\mu\Delta u-(\mu+\lambda)\nabla\operatorname{div}u=&f, \quad (x,t)\in\mathbb{R}^{d}\times (0,T), \\
    u(x,0)=&u_{0}
}
    then \(u\) satisfies
    \[
    \min\{\mu,2\mu+\lambda\}^{\frac{1}{\varrho_{1}}}
    \|u\|_{\tilde{L}_{T}^{\varrho_{1}}(\dot{B}_{p,1}^{s+\frac{2}{\varrho_{1}}})} 
    \lesssim \|u_{0}\|_{\dot{B}_{p,1}^{s}} +
     \|f\|_{\tilde{L}_{T}^{\varrho_{2}}(\dot{B}_{p,1}^{s-2+\frac{2}{\varrho_{2}}})}.
    \]
\end{lemma}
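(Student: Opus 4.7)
The plan is to reduce the Lam\'e system to two scalar heat equations via the Helmholtz decomposition, and then derive the claimed mixed space-time estimate on each via Duhamel, a Littlewood--Paley semigroup bound, and Young's convolution inequality in time. \textbf{Helmholtz step.} Write $u=\mathcal{P}u+\mathcal{Q}u$, where $\mathcal{Q}=\nabla\Delta^{-1}\mathrm{div}$ projects onto gradient fields and $\mathcal{P}=I-\mathcal{Q}$ onto divergence-free fields. Both are Fourier multipliers whose symbols are $0$-homogeneous and smooth away from the origin, and after localization by $\dot\Delta_{j}$ they become convolution operators with rescaled Schwartz kernels, hence bounded on $\dot{B}^{\sigma}_{p,1}$ for every $\sigma\in\R$ and $1\leq p\leq\infty$. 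Applying $\mathcal{P}$ to the Lam\'e equation annihilates the gradient field $\nabla\mathrm{div}\,u$, giving $\partial_{t}(\mathcal{P}u)-\mu\Delta(\mathcal{P}u)=\mathcal{P}f$, while applying $\mathcal{Q}$ and using $\nabla\mathrm{div}(\mathcal{Q}u)=\Delta(\mathcal{Q}u)$ for gradient fields gives $\partial_{t}(\mathcal{Q}u)-(2\mu+\lambda)\Delta(\mathcal{Q}u)=\mathcal{Q}f$, each with the corresponding projected initial datum. It therefore suffices to prove the estimate for the scalar heat equation $\partial_{t}v-\nu\Delta v=g$, $v(0)=v_{0}$, with $\nu=\mu$ or $\nu=2\mu+\lambda$, and then combine.

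\textbf{Block-wise semigroup bound.} Localizing by $\dot\Delta_{j}$ and using Duhamel,
\[
\dot\Delta_{j}v(t)=e^{\nu t\Delta}\dot\Delta_{j}v_{0}+\int_{0}^{t}e^{\nu(t-s)\Delta}\dot\Delta_{j}g(s)\,ds.
\]
A standard scaling argument reduces the heat-kernel estimate to the claim that $\|\mathcal{F}^{-1}(\psi(\xi)e^{-\tau|\xi|^{2}})\|_{L^{1}}\les e^{-c\tau}$ uniformly for $\tau\geq 0$; this follows from the annular support of $\psi$ (so $|\xi|$ is bounded below by a fixed constant, giving the exponential factor) together with uniform Schwartz-norm control of the symbol (for the $L^{1}$-norm of the kernel). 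Consequently,
\[
\|e^{\nu t\Delta}\dot\Delta_{j}h\|_{L^{p}}\les e^{-c\nu t 2^{2j}}\|\dot\Delta_{j}h\|_{L^{p}}\qquad(t\geq 0,\ 1\leq p\leq\infty).
\]
Plugging this into Duhamel, taking the $L^{\varrho_{1}}(0,T)$ norm in time, and using Young's convolution inequality with exponent $r$ defined by $1+1/\varrho_{1}=1/r+1/\varrho_{2}$ (the hypothesis $\varrho_{2}\leq\varrho_{1}$ ensures $r\geq 1$), together with $\|e^{-c\nu\cdot 2^{2j}}\|_{L^{q}(\R_{+})}\les(\nu 2^{2j})^{-1/q}$ for $1\leq q<\infty$, one obtains
\[
(\nu 2^{2j})^{1/\varrho_{1}}\|\dot\Delta_{j}v\|_{L^{\varrho_{1}}_{T}L^{p}}\les \|\dot\Delta_{j}v_{0}\|_{L^{p}}+(\nu 2^{2j})^{1/\varrho_{2}-1}\|\dot\Delta_{j}g\|_{L^{\varrho_{2}}_{T}L^{p}}.
\]

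\textbf{Summation and assembly.} Multiplying by $2^{js}$ and summing over $j\in\Z$ with unit weight (matching the $\ell^{1}(\Z)$ used in the definition of $\tilde L^{\varrho_{1}}_{T}(\dot B^{\sigma}_{p,1})$) yields
\[
\nu^{1/\varrho_{1}}\|v\|_{\tilde L^{\varrho_{1}}_{T}(\dot B^{s+2/\varrho_{1}}_{p,1})}\les \|v_{0}\|_{\dot B^{s}_{p,1}}+\|g\|_{\tilde L^{\varrho_{2}}_{T}(\dot B^{s-2+2/\varrho_{2}}_{p,1})},
\]
with the $\nu$-dependent factor on the source side absorbed into the implicit constant of $\les$. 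Applying this once with $(\nu,v,g)=(\mu,\mathcal{P}u,\mathcal{P}f)$ and once with $(\nu,v,g)=(2\mu+\lambda,\mathcal{Q}u,\mathcal{Q}f)$, using the Besov boundedness of $\mathcal{P}$ and $\mathcal{Q}$ on the right, summing the two, and bounding the left-hand side from below via $\min\{\mu,2\mu+\lambda\}^{1/\varrho_{1}}\|u\|_{\tilde L^{\varrho_{1}}_{T}(\dot B^{s+2/\varrho_{1}}_{p,1})}$, produces the stated inequality. The only genuinely analytical ingredient here is the block-wise heat kernel bound; the rest is Young's convolution in time and $\ell^{1}$ bookkeeping in the Littlewood--Paley sum, which is the main place where one must be careful but is otherwise standard.
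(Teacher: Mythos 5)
The paper does not prove this lemma at all: it is quoted verbatim from Danchin (Proposition 2.1 of \cite{D2014}) and used as a black box. Your proof is correct and is essentially the standard argument behind that cited result: diagonalize the Lam\'e operator by the Helmholtz projectors (which are bounded on $\dot B^{\sigma}_{p,1}$ for all $p$ after Littlewood--Paley localization, even though not on $L^{1}$ or $L^{\infty}$), reduce to heat equations with viscosities $\mu$ and $2\mu+\lambda$, and conclude via the block-wise bound $\|e^{\nu t\Delta}\dot\Delta_{j}h\|_{L^{p}}\lesssim e^{-c\nu t2^{2j}}\|\dot\Delta_{j}h\|_{L^{p}}$, Duhamel, and Young's inequality in time. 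The only cosmetic point is that your source term carries a factor $\nu^{1/\varrho_{2}-1}$, so the implicit constant in $\lesssim$ depends on $\mu,\lambda$; this is consistent with the statement as written (and immaterial here, since the paper normalizes $2\mu+\lambda=1$), but worth being aware of if one wanted constants uniform in the viscosities.
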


\begin{lemma}[Theorem 2.2, \cite{RH2015}]\label{transport}
Let $1\leq p\leq p_{1}\leq \infty$,$1\leq r\leq \infty$ and $s\in\mathbb{R}$. Assume that
\begin{align*}
-\min(\frac{d}{p_1}, \frac{d}{p'}) <s<1+\frac{d}{p_{1}}.
\end{align*}
Then any smooth enough solutions of
\case{\label{eq:TDep}
\partial_t a + v\cdot \nabla a + \lambda a =& F, \\
a(0) =& a_0,
}
where $\lambda\geq 0$, satisfy
\begin{equation*}
\begin{aligned}
\|a\|_{\tilde{L}_{t}^{\infty}\dot{B}_{p,r}^{s}}+
\lambda\|a\|_{\tilde{L}_{t}^{1}\dot{B}_{p,r}^{s}}
\leq Ce^{CV(t)}(\|a_{0}\|_{\dot{B}_{p,r}^{s}}+\|F\|_{\tilde{L}_{t}^{1}\dot{B}_{p,r}^{s}})
\end{aligned}
\end{equation*}
with
\begin{equation*}\label{2.30}
\begin{aligned}
V(t)=\int_{0}^{t}\|\nabla v(\tau)\|_{\dot{B}^{\frac{d}{p_1}}_{p_1,\infty}\cap L^\infty}d\tau.
\end{aligned}
\end{equation*}
In the case $s=1+\frac{d}{p_{1}}$ and $r=1$, the above inequality is true with 
$V^{\prime}(t)=\|\nabla v(\tau)\|_{\dot{B}^{\frac{d}{p_{1}}}_{p_{1},\infty}}$.
\end{lemma}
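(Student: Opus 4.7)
The plan is to run a contraction/fixed-point argument in the resolution space $X_{q,p}$, after unifying low and high frequencies through the nonlinear transform $J:(a,u)\mapsto(a,\tilde u)$ with $\tilde u:=(au)^\ell+u$. Since $\tilde u^\ell=m^\ell$ while $\tilde u^h$ coincides with the effective velocity of \cite{H20111,H20112} modulo a low-frequency error, this single change of unknowns packages the two ingredients advertised in the introduction: the working variable is morally the momentum at low frequency and the effective velocity at high frequency. As will be established in Proposition~\ref{momentum}, $J$ is bi-Lipschitz on a small ball of $X_{q,p}$, so estimates for $(a,\tilde u)$ transfer to estimates for $(a,u)$, and the hypothesis $\|(a_0,m_0)^\ell\|_{\dot B^{-3+7/q}_{q,1}}\le\delta$ provides exactly the required low-frequency smallness of $\tilde u_0^\ell$.

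On the linear side I would split $(a,\tilde u)$ at the cutoff $k_0$. For the low-frequency block the parabolic-dispersive estimate \eqref{eq:lowfrequency estimate}, applied to the $(a,\tilde u)^\ell$ system viewed as linear with a source, controls the first line of the $X_{q,p}$ norm, provided the nonlinear source is bounded in $\tilde L^1_t(\dot B^{s}_{q,1})$ for the appropriate $s$; the loss $|1-2/q|$ inherent to the dispersive semigroup is precisely the reason one must start with the extra regularity index $-3+7/q$ on $(a_0,m_0)^\ell$, consistently recovering the classical $\dot B^{1/2}_{2,1}$ bound when $q=2$. For the high-frequency block, the effective-velocity structure decouples the system into a pure heat equation for $\tilde u^h$ at regularity $-1+3/p$, handled by Lemma~\ref{heatm}, and a damped transport equation for $a^h$ with drift $u$ and damping of order $1$, handled by Lemma~\ref{transport} at regularity $3/p$; the restriction $p<6$ is compatible with the assumption $s<1+d/p_1$ of that lemma through the embeddings available in the $X_{q,p}$ norm.

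The bulk of the work is to verify the nonlinear source bounds in these linear estimates. Using Bony's decomposition (Lemma~\ref{prodlemma}) together with the composition estimate (Lemma~\ref{linestimate}) applied to $I(a)$ and $k(a)$, each of $\operatorname{div}(au)$, $u\cdot\nabla u$, $I(a)\mathcal{A}u$, $k(a)\nabla a$, and the commutator terms produced by $J$ and by the passage to $m^\ell$, splits into $low\times low$, $low\times high$ and $high\times high$ interactions, each analysed in the appropriate $\dot B^\bullet_{q,1}$ or $\dot B^\bullet_{p,1}$ piece of $X_{q,p}$. Exactly the $high\times high\to low$ piece of $\operatorname{div}(au)$ and of $u\cdot\nabla u$ is the contribution that forced $p\le 4$ in \cite{FD2010,CMZ2010,H20112}; in momentum form the corresponding source for $m^\ell$ is a pure divergence $\nabla\tilde h(a,m)$, so the outer derivative produces, at low frequency $2^k$, a factor $2^k$ that absorbs the $L^q$-dispersive loss and enlarges the admissible range to $p<6$. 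The conditions $2\le q\le p\le 2q$ and \eqref{condition} are precisely what is needed for all three types of products to close simultaneously in $\dot B^{-1+3/q}_{q,1}$ for $(a,\tilde u)^\ell$, in $\dot B^{3/p}_{p,1}$ for $a^h$, and in $\dot B^{-1+3/p}_{p,1}$ for $\tilde u^h$.

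Combining the linear bounds with these nonlinear estimates yields the a priori inequality $\|(a,\tilde u)\|_{X_{q,p}}\lesssim\delta$, hence $\|(a,u)\|_{X_{q,p}}\lesssim\delta$ via Proposition~\ref{momentum}. Existence is then obtained by Friedrichs regularisation of \eqref{CNS} and a standard compactness/passage-to-limit argument, uniqueness from a Lipschitz estimate on the difference of two solutions run at one unit less of regularity and using Lemma~\ref{transport} for the transported component, and the continuity of the flow in time at the endpoints from the $\tilde L^\infty$-type norms in $X_{q,p}$. The persistence $(a,m)^\ell\in C(\R_+;\dot B^{-3+7/q}_{q,1})$ with a growing constant $C(t)$ is obtained by re-applying \eqref{eq:lowfrequency estimate} on finite intervals to the already-controlled nonlinear source. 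Continuity of $S_T:E_\delta\to X_{q,p}$ is the last step and requires a frequency-envelope/stability variant of the same scheme, in the spirit of \cite{GYZ2024}. The main obstacle throughout is the bookkeeping of the momentum-vs-dispersive trade-off: one must check term by term that the derivative gain provided by the null/momentum structure beats the regularity loss $(d-1)|1/2-1/q|$ of \eqref{eq:lowfrequency estimate}, simultaneously with the product-estimate thresholds, and in $d=3$ this holds exactly under \eqref{condition} with $2\le q\le p\le 2q$.
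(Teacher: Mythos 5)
Your proposal does not address the statement you were asked to prove. The statement is Lemma~\ref{transport}, a \emph{linear} a priori estimate for the damped transport equation $\partial_t a + v\cdot\nabla a + \lambda a = F$ in the Besov space $\dot B^s_{p,r}$, with the loss $e^{CV(t)}$, $V(t)=\int_0^t\|\nabla v(\tau)\|_{\dot B^{d/p_1}_{p_1,\infty}\cap L^\infty}\,d\tau$. What you have written instead is a sketch of the proof of the paper's main result, Theorem~\ref{gwp} (nonlinear transform $J$, momentum formulation at low frequency, effective velocity at high frequency, parabolic-dispersive semigroup estimates, bootstrap). None of that machinery is relevant here: your text never touches the equation \eqref{eq:TDep}, never produces the Gronwall factor $e^{CV(t)}$, never explains the index window $-\min(d/p_1,d/p')<s<1+d/p_1$ or the endpoint case $s=1+d/p_1$, $r=1$, and never extracts the damping gain $\lambda\|a\|_{\tilde L^1_t\dot B^s_{p,r}}$ on the left-hand side. (The paper itself gives no proof either; it cites Theorem~2.2 of \cite{RH2015}.)

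For the record, the standard argument runs as follows: apply $\dot\Delta_j$ to the equation and write $\dot\Delta_j(v\cdot\nabla a)=v\cdot\nabla\dot\Delta_j a+[\dot\Delta_j,v\cdot\nabla]a$; the term $v\cdot\nabla\dot\Delta_j a$ is harmless in the $L^p$ energy identity up to $\frac1p\|\operatorname{div}v\|_{L^\infty}\|\dot\Delta_j a\|_{L^p}$ (transport along characteristics), while the damping term gives $\frac{d}{dt}\|\dot\Delta_j a\|_{L^p}+\lambda\|\dot\Delta_j a\|_{L^p}\le\|\dot\Delta_j F\|_{L^p}+\|[\dot\Delta_j,v\cdot\nabla]a\|_{L^p}+C\|\operatorname{div}v\|_{L^\infty}\|\dot\Delta_j a\|_{L^p}$. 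The commutator is bounded by $c_j(t)\,2^{-js}\,\|\nabla v\|_{\dot B^{d/p_1}_{p_1,\infty}\cap L^\infty}\|a\|_{\dot B^s_{p,r}}$ with $(c_j)\in\ell^r$, and this is exactly where the restrictions $p\le p_1$ and $-\min(d/p_1,d/p')<s<1+d/p_1$ (or $s=1+d/p_1$, $r=1$) enter. Integrating in time, multiplying by $2^{js}$, summing in $\ell^r$, and applying Gronwall's lemma yields the stated estimate. If you want to salvage your submission, you need to replace it entirely with an argument of this type.
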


\section{Estimates on parabolic-dispersive semigroup}
For many models in fluid dynamics, the low frequency components satisfy a parabolic-dispersive equation. In this section, we prove some estimates for parabolic-dispersive semigroup. We consider
\EQ{\nonumber
e^{-th(D)}f=\mathcal{F}^{-1}(e^{-th(\xi)}\hat{f})
}
and assume $h$ is of parabolic-dispersive type
\EQ{\nonumber h(\xi)=|\xi|^{a}+i\omega(\xi), \quad \xi\in \R^d.}
\begin{lemma}\label{le-e1}
Let  $f_{k}(x)=f(\frac{x}{2^{k}})$. Then we have
\begin{equation*}
\begin{aligned}
\|e^{-th(D)}\dot{\Delta}_{k}f\|_{L^{p}}=2^{-\frac{kd}{p}}\|e^{-th(2^kD)}\dot{\Delta}_{0}f_{k}\|_{L^{p}}.
\end{aligned}
\end{equation*}
\end{lemma}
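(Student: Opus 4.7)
The statement is a standard scaling identity obtained by unfolding the Fourier representation of the semigroup and making a dyadic change of variable. The plan is simply to compute both sides via the Fourier integral and match them.

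First, I would write
\[
e^{-th(D)}\dot{\Delta}_k f(x) = (2\pi)^{-d/2}\int_{\R^d} e^{ix\cdot \xi}\, e^{-t h(\xi)}\, \psi(2^{-k}\xi)\, \hat f(\xi)\, d\xi,
\]
where $\psi$ is the Littlewood--Paley profile, so that $\widehat{\dot{\Delta}_k f}(\xi) = \psi(2^{-k}\xi)\hat f(\xi)$. Then I would substitute $\xi = 2^k\eta$, giving $d\xi = 2^{kd}d\eta$ and turning the multiplier $\psi(2^{-k}\xi)$ into $\psi(\eta)$, while $e^{-th(\xi)}$ becomes $e^{-th(2^k\eta)}$, the symbol of $e^{-th(2^k D)}$.

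The next step is to recognize the rescaled Fourier transform. Since $f_k(x)=f(x/2^k)$, a direct change of variable shows $\hat{f_k}(\eta) = 2^{kd}\hat f(2^k\eta)$. Substituting this back yields
\[
e^{-th(D)}\dot{\Delta}_k f(x) = (2\pi)^{-d/2}\int_{\R^d} e^{i(2^k x)\cdot\eta}\, e^{-th(2^k\eta)}\, \psi(\eta)\, \hat{f_k}(\eta)\, d\eta
= \bigl(e^{-th(2^k D)}\dot{\Delta}_0 f_k\bigr)(2^k x).
\]

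Finally, taking the $L^p$ norm in $x$ and using the elementary scaling $\|g(2^k\cdot)\|_{L^p(\R^d)} = 2^{-kd/p}\|g\|_{L^p(\R^d)}$ with $g = e^{-th(2^k D)}\dot{\Delta}_0 f_k$ gives the claimed identity. There is no genuine obstacle here: the statement is a clean conjugation of the semigroup by the dyadic dilation, and the only thing one has to be careful about is bookkeeping the $2^{kd}$ factors between the change of variable in $\xi$ and the definition of $\hat{f_k}$, which cancel against the $2^{-kd/p}$ produced by the change of variable in $x$ to yield exactly the stated exponent.
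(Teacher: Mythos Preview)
Your argument is correct and follows essentially the same approach as the paper: both compute the Fourier representation, perform the dyadic change of variable $\xi=2^k\eta$, identify the result as $\bigl(e^{-th(2^kD)}\dot{\Delta}_0 f_k\bigr)(2^kx)$, and then use the $L^p$ scaling identity to conclude.
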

\begin{proof}
By the properties of Fourier transform, we have
\begin{equation}\label{95}
\begin{aligned}
e^{-th(D)}\dot{\Delta}_{k}f=&\mathcal{F}^{-1}(e^{-th(\xi)}\hat{f}(\xi)\chi(\frac{\xi}{2^{k}}))\\
=&\mathcal{F}^{-1}(e^{-th(2^{k}\frac{\xi}{2^{k}})}\hat{f}(2^{k}\frac{\xi}{2^{k}})
\chi(\frac{\xi}{2^{k}}))\\
=&2^{kn}[\mathcal{F}^{-1}(e^{-th(2^{k}\xi)}\hat{f}(2^{k}\xi)
\chi(\xi))](2^{k}x)\\
=&[e^{-th(2^{k}D)}\dot{\Delta}_{0}f_{k}](2^{k}x).\\
\end{aligned}
\end{equation}  
Then by change of variables we complete the proof of the lemma. 
\end{proof}
If the dispersion is of equal or higher order compared to the parabolic part, then dispersion does not affect the estimates on the semigroup.  More precisely, we have

\begin{lemma}\label{le-e2}
Assume $h(\xi)=|\xi|^{a}+i\omega(\xi)$ with some $a>0$. Assume $k\leq 0$ and for $|\xi|\sim 2^k$, $|\partial_\xi^j\omega(\xi)|\lesssim 2^{k(a-j)}$ for $|j|=0,1,2, \cdots, [\frac{d}{2}]+1.$
Then there exist $0<c, C<\infty$ such that for $1\leq p\leq \infty$
\begin{equation*}
\begin{aligned}
e^{-Ct 2^{ak}}\|\dot{\Delta}_{k}f\|_{L^{p}}\lesssim \|e^{-th(D)}\dot{\Delta}_{k}f\|_{L^{p}}\lesssim  e^{-ct 2^{ak}}\|\dot{\Delta}_{k}f\|_{L^{p}}.
\end{aligned}
\end{equation*}
\end{lemma}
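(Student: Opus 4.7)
The plan is to reduce to the unit-frequency scale via Lemma~\ref{le-e1} and then estimate the $L^{1}$-norm of the associated convolution kernel with a standard Sobolev-type argument. Set $s:=t\,2^{ak}$. Since after rescaling both sides of the desired inequality carry the same factor $2^{-kd/p}$, it is enough to establish, for $g$ with $\widehat{g}$ supported in $\{|\xi|\sim 1\}$,
\begin{equation*}
e^{-Cs}\|g\|_{L^{p}}\lesssim \|e^{-th(2^{k}D)}g\|_{L^{p}}\lesssim e^{-cs}\|g\|_{L^{p}}.
\end{equation*}
The rescaled symbol factors as $e^{-s|\xi|^{a}}\,e^{-it\omega(2^{k}\xi)}$, and on $|\xi|\sim 1$ the parabolic piece satisfies $e^{-Cs}\lesssim e^{-s|\xi|^{a}}\lesssim e^{-cs}$.

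For the upper bound, pick $\tilde{\psi}\in C_{c}^{\infty}$ equal to $1$ on $\supp\psi$ and supported in $\{|\xi|\sim 1\}$, and let $K_{s}$ be the inverse Fourier transform of $e^{-s|\xi|^{a}}e^{-it\omega(2^{k}\xi)}\tilde{\psi}(\xi)$. Then $e^{-th(2^{k}D)}g=K_{s}\ast g$, and Young's inequality reduces matters to showing $\|K_{s}\|_{L^{1}}\lesssim e^{-c's}$. I would invoke the routine estimate $\|\mathcal{F}^{-1}[m]\|_{L^{1}}\lesssim \|m\|_{H^{N}}$ for compactly supported $m$, with $N=[d/2]+1>d/2$ (proved by the $\jb{x}^{-N}\jb{x}^{N}$ trick plus Plancherel). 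The chain rule and the hypothesis $|\partial^{j}\omega(2^{k}\xi)|\lesssim 2^{k(a-j)}$ give $|\partial_{\xi}^{\alpha}\omega(2^{k}\xi)|\lesssim 2^{ak}$ for $|\alpha|\leq N$, so multiplying by $t$ yields a bound of $s$; Fa\`a di Bruno then gives $|\partial_{\xi}^{\alpha}e^{-it\omega(2^{k}\xi)}|\lesssim \jb{s}^{|\alpha|}$. Derivatives of $e^{-s|\xi|^{a}}\tilde\psi$ on $|\xi|\sim 1$ produce a polynomial factor in $s$ times $e^{-cs}$. Collecting everything, $\|K_{s}\|_{L^{1}}\lesssim \jb{s}^{N}e^{-cs}\lesssim e^{-c's}$.

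For the lower bound, I invert the semigroup on the annulus. Choose $\chi\in C_{c}^{\infty}$ equal to $1$ on $\supp\psi$ and supported in $\{|\xi|\sim 1\}$, and define $\widetilde{K}_{s}$ as the inverse Fourier transform of $e^{s|\xi|^{a}}e^{it\omega(2^{k}\xi)}\chi(\xi)$. Since $\chi\equiv 1$ on $\supp\widehat{g}$, we have $g=\widetilde{K}_{s}\ast\bigl(e^{-th(2^{k}D)}g\bigr)$. Running the same Sobolev argument with the bounded factor $e^{s|\xi|^{a}}\leq e^{Cs}$ in place of $e^{-s|\xi|^{a}}$ yields $\|\widetilde{K}_{s}\|_{L^{1}}\lesssim \jb{s}^{N}e^{Cs}\lesssim e^{C's}$, and Young's inequality delivers $\|g\|_{L^{p}}\lesssim e^{C's}\|e^{-th(2^{k}D)}g\|_{L^{p}}$.

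The main obstacle is the $L^{1}$-bound on $K_{s}$ with the stated exponential decay: naively, each $\xi$-derivative falling on the dispersive phase costs a factor of $s$, and without the gain from the parabolic factor this would preclude any decay in $s$. The symmetric hypothesis $|\partial^{j}\omega(\xi)|\lesssim |\xi|^{a-j}$ for $j\leq [d/2]+1$ is exactly what ensures that each such derivative costs only one power of $s$, so that the polynomial loss $\jb{s}^{N}$ is safely absorbed by $e^{-cs}$. Everything else is routine manipulation of Fourier multipliers on a dyadic shell.
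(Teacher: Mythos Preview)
Your argument is correct. The upper bound matches the paper's exactly: both rescale to the unit annulus via Lemma~\ref{le-e1} and bound the $L^{1}$-norm of the convolution kernel by a Sobolev/Bernstein-type estimate on the multiplier, using that all phase derivatives on the annulus are $O(s)$ so that the polynomial loss $\jb{s}^{N}$ is absorbed by the parabolic factor $e^{-cs}$.

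For the lower bound you take a genuinely different route. The paper writes $\dot{\Delta}_{k}f=e^{th(D)}e^{-th(D)}\dot{\Delta}_{k}f$, expands $e^{th(D)}$ as the Taylor series $\sum_{m}\frac{(th(D))^{m}}{m!}$, and bounds each summand on the dyadic annulus to sum to $e^{Ct2^{ak}}$. You instead run the \emph{same} kernel estimate as in the upper bound on the inverse multiplier $e^{+s|\xi|^{a}}e^{+it\omega(2^{k}\xi)}\chi(\xi)$, obtaining $\|\widetilde{K}_{s}\|_{L^{1}}\lesssim \jb{s}^{N}e^{Cs}\lesssim e^{C's}$ directly and concluding via Young. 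Your approach is pleasantly symmetric with the upper bound and avoids having to track how the $L^{p}$-multiplier norm of $h(D)^{m}$ grows with $m$ (which in the paper's argument requires a bit of care, since Leibniz on $h^{m}$ introduces polynomial factors in $m$ that must still be summed). The paper's series argument is slightly more elementary in that it never writes down the growing kernel $\widetilde{K}_{s}$, but your method is cleaner once one has already done the upper-bound kernel estimate. Both are valid.
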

\begin{proof}
We first prove the upper bound. Since $\|\dot{\Delta}_{0}f_{k}\|_{L^{p}} = 2^{\frac{kd}{p}} \|\dot{\Delta}_{k}f\|_{L^{p}}$, by Lemma \ref{le-e1}, it suffices to show that
\[
\|e^{-th(2^kD)}\dot{\Delta}_{0}f_{k}\|_{L^{p}}=\|e^{-th(2^kD)}\tilde{\dot{\Delta}}_{0}\dot{\Delta}_{0}f_k\|_{L^{p}} \lesssim e^{-ct2^{ak}} \|\dot{\Delta}_{0}f_{k}\|_{L^{p}}.
\]
Note that $e^{-th(2^kD)}\tilde{\dot{\Delta}}_{0}$ is a convolution operator with kernel
$K_t(x) = \mathcal{F}^{-1}\left( e^{-th(2^k\xi)} \tilde{\psi}(\xi) \right)$,
where $\tilde{\psi}$ is the multiplier for $\tilde{\dot{\Delta}}_{0}$. It suffices to prove $\|K_t\|_{L^1}\les e^{-ct2^{ak}}$.

Let $m(\xi) = e^{-th(2^k\xi)}\tilde{\psi} (\xi) = e^{-t(2^{ak}|\xi|^a + i\omega(2^k\xi))} \tilde \psi(\xi)$. We estimate $\|K_t\|_{L^1}$ using the Bernstein multiplier estimate: for $L=[\frac{d}{2}]+1$
$$
\|K_t\|_{L^1} \lesssim \|m\|_{L^2}+\sum_{i=1}^{d} \| \partial_i^L m \|_{L^2}.
$$
Obviously, 
$\|m\|_{L^2} \lesssim e^{-ct2^{ak}}$.
It remains to estimate $\|\partial_i^Lm\|_{L^2}$. Let $\phi(\xi) = 2^{ak}|\xi|^a + i\omega(2^k\xi)$.  The assumption on $\omega$ implies: for $|\xi|\sim 1$, $|\partial^\beta \phi(\xi)| \lesssim 2^{ak}$ for any $\beta$. Therefore, we have
$|\partial_i^L m(\xi)| \lesssim (1 + t2^{ak})^L e^{-c't2^{ak}}\cdot 1_{|\xi|\sim 1}$ and thus $\|\partial_i^L m \|_{L^2} \lesssim e^{-ct2^{ak}}$.

On the other hands, for the lower bound, we have
\begin{equation}\label{A1.1}
\begin{aligned}
\|\dot{\Delta}_{k}f\|_{L^{p}}
\lesssim&\|e^{th(D)}e^{-th(D)}\dot{\Delta}_{k}f\|_{L^{p}}\\
\lesssim&\|\sum_{m=0}^{+\infty}\frac{(th(D))^{m}}{m!}e^{-th(D)}\dot{\Delta}_{k}f\|_{L^{p}}\\
\lesssim&\sum_{m=0}^{+\infty}\frac{C^{m}t^{m}}{m!}2^{akm}\|e^{-th(D)}\dot{\Delta}_{k}f\|_{L^{p}}\lesssim
 e^{Ct 2^{ak}}\|e^{-th(D)}\dot{\Delta}_{k}f\|_{L^{p}}.
\end{aligned}
\end{equation}
This proves the first inequality.
\end{proof}

In our application for compressible Navier-Stokes equations, we need to take
\begin{equation*}
\begin{aligned}
&h(\xi)=-\lambda_\pm(|\xi|)=|\xi|^{2}\mp i|\xi|\sqrt{4-|\xi|^{2}}.
\end{aligned}
\end{equation*}
One can see that $h$ does not satisfy the conditions in Lemma \ref{le-e2}, as the dispersion is of lower order. So we write $h$ as
\EQ{\label{hdef}
h(\xi)=(|\xi|^{2}\mp i|\xi|\sqrt{4-|\xi|^{2}}\pm 2i|\xi|)\mp i2|\xi|:=\widetilde h(\xi)\mp i2|\xi|.
}
It's easy to see that $\tilde h$ satisfies the conditions in Lemma \ref{le-e2}. So by this lemma we get for $1\leq p\leq \infty$ and $k\leq 0$
\EQ{\label{l1estimate} 
\|e^{t\lambda_\pm (D)}\dot{\Delta}_{k}f\|_{L^{p}}\lesssim  e^{-ct 2^{2k}}\norm{e^{\mp 2it D}\dot \Delta_k f}_{L^p}.
}
It reduces to an estimate for the wave operator $e^{itD}$.
We will need the following results due to Peral \cite{Peral} (Section IV) and Miyachi \cite{Miyachi}.

\begin{lemma}[Theorem 4.2, \cite{Miyachi}]
Let $T_mf=\ft^{-1}[1-\eta(\xi)]|\xi|^{-b}e^{i|\xi|} \ft f$. Then
\EQ{
\norm{T_mf}_{\Hl^1(\R^d)}\les \norm{f}_{\Hl^1(\R^d)}
}
if and only if $b\geq \frac{d-1}{2}$. Here $\Hl^1$ is the Hardy space defined in \eqref{Hardyspace}. 

As a consequence, we get for $k\geq 0$
\EQ{\label{eq:L1wave}
\norm{e^{iD}\dot \Delta_k f}_{L^1(\R^d)}\les 2^{k(d-1)/2}\norm{f}_{L^1(\R^d)},\\
\norm{e^{i2^kD}\dot \Delta_0 f}_{L^1(\R^d)}\les 2^{k(d-1)/2}\norm{f}_{L^1(\R^d)}.
}
\end{lemma}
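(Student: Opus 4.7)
The $\mathcal{H}^1$ multiplier theorem of Miyachi is the deep input and I take it as a black box. The task is to derive the Littlewood--Paley $L^1$ bound \eqref{eq:L1wave} for the wave semigroup as a consequence. The key algebraic manoeuvre is to factor the multiplier as
\[
e^{i|\xi|}\psi(\xi/2^k) \;=\; \bigl[(1-\eta(\xi))|\xi|^{-b}e^{i|\xi|}\bigr] \cdot \bigl[|\xi|^{b}\psi(\xi/2^k)\bigr],
\]
with $b=(d-1)/2$. The first factor is exactly the Miyachi symbol, so the corresponding operator is bounded on $\mathcal{H}^1$; the second factor is a smooth symbol concentrated at frequency $\sim 2^k$ which, by Bernstein, contributes a $2^{kb}$ loss and no more.

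For $k\ge 1$ the support of $\psi(\xi/2^k)$ lies in $\{|\xi|\ge 2^k\ge 2\}$, where $1-\eta(\xi)\equiv 1$, so the factorization is exact and gives the operator identity $e^{iD}\dot\Delta_k f = T_m(|D|^b\dot\Delta_k f)$. I would then chain together four ingredients: the embedding $\mathcal{H}^1\hookrightarrow L^1$; Miyachi's theorem on $\mathcal{H}^1$; the elementary equivalence $\|h\|_{\mathcal{H}^1}\sim \|h\|_{L^1}$ for $h$ Fourier-localized at frequency $\sim 2^k$, which follows directly from the square-function definition \eqref{Hardyspace} since only $O(1)$ dyadic blocks are nonzero; and the Bernstein inequality $\||D|^b\dot\Delta_k f\|_{L^1}\lesssim 2^{kb}\|\dot\Delta_k f\|_{L^1}\lesssim 2^{kb}\|f\|_{L^1}$. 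These yield the first inequality of \eqref{eq:L1wave} for $k\ge 1$. The case $k=0$ is handled separately and trivially: the symbol $e^{i|\xi|}\psi(\xi)$ is smooth and compactly supported away from the origin, so its inverse Fourier transform is Schwartz and Young's inequality gives $\|e^{iD}\dot\Delta_0 f\|_{L^1}\lesssim\|f\|_{L^1}$.

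The second estimate in \eqref{eq:L1wave} is obtained by scaling. A change of variable $\xi\mapsto 2^{-k}\xi$ in the Fourier integral defining $e^{i2^kD}\dot\Delta_0 f$ produces the identity
\[
e^{i2^k D}\dot\Delta_0 f(x) \;=\; 2^{-kd}\bigl(e^{iD}\dot\Delta_k g\bigr)(x/2^k), \qquad g(y):=2^{kd}f(2^k y),
\]
with $\|g\|_{L^1}=\|f\|_{L^1}$, so that $\|e^{i2^kD}\dot\Delta_0 f\|_{L^1}=\|e^{iD}\dot\Delta_k g\|_{L^1}$ and the desired bound reduces to the first inequality already proved.

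\textbf{Main obstacle.} The only nontrivial analytic content sits inside Miyachi's theorem, which is cited. The remaining work is structural and should cause no real difficulty; the only mild subtleties are (i) verifying the factorization is legitimate on the frequency support, which forces the trivial split $k=0$ vs $k\ge 1$, and (ii) identifying $\|\cdot\|_{\mathcal{H}^1}$ with $\|\cdot\|_{L^1}$ on a single Littlewood--Paley block, which is a one-line check from the definition \eqref{Hardyspace}.
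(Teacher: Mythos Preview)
Your proposal is correct and complete. The paper itself does not give any proof of the consequence \eqref{eq:L1wave}; it simply states Miyachi's theorem and the two $L^1$ estimates as a lemma, with no argument connecting them. Your derivation---factoring $e^{i|\xi|}\psi_k(\xi)$ through the Miyachi symbol for $k\ge 1$, using the trivial $\mathcal{H}^1\sim L^1$ equivalence on a single dyadic block together with Bernstein, and recovering the second estimate by scaling---is the standard way to extract these bounds and fills in exactly what the paper omits.
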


\begin{lemma}[NS-low frequency semigroup]\label{NS-low frequency semigroup}
Let $h=-\lambda_\pm$ be given by \eqref{hdef}. 
Assume $1\leq p\leq\infty$, $k\leq 0$. Then we have
\begin{equation}\label{lowfresemi}
\begin{aligned}
\|e^{-th (D)}\dot{\Delta}_{k}f\|_{L^{p}}\lesssim e^{-ct 2^{2k}}2^{-(d-1)k|\frac{1}{2}-\frac{1}{p}|}\|\dot{\Delta}_{k}f\|_{L^{p}}.
\end{aligned}
\end{equation}
\end{lemma}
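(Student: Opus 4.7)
The plan is to exploit the decomposition $h = \widetilde h \mp 2i|\xi|$ already set up in \eqref{hdef}. Since $\widetilde h$ and $\mp 2i|\xi|$ are commuting Fourier multipliers, we factor
\EQN{
e^{-th(D)}\dot\Delta_k f \;=\; e^{-t\widetilde h(D)}\,e^{\pm 2itD}\dot\Delta_k f.
}
By Lemma \ref{le-e2} (with $a=2$), which applies to $\widetilde h$ and is preserved under $L^p$ on functions localized to $\{|\xi|\sim 2^k\}$, the parabolic factor contributes $e^{-ct2^{2k}}$, reducing matters to a pure wave estimate
\EQN{
\|e^{\pm 2itD}\dot\Delta_k f\|_{L^p} \;\lesssim\; (1+t2^k)^{(d-1)|\frac12-\frac1p|}\|\dot\Delta_k f\|_{L^p}.
}

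To obtain this wave estimate, I would first scale to unit frequency using the identity in Lemma \ref{le-e1} with $h(\xi)=\mp 2i|\xi|$: setting $\tau = 2t\cdot 2^k$ and $f_k(x)=f(x/2^k)$,
\EQN{
\|e^{\pm 2itD}\dot\Delta_k f\|_{L^p} \;=\; 2^{-kd/p}\|e^{\pm i\tau D}\dot\Delta_0 f_k\|_{L^p},
}
and $\|\dot\Delta_0 f_k\|_{L^p}=2^{kd/p}\|\dot\Delta_k f\|_{L^p}$ cancels the prefactor. It then suffices to show, for $\tau\geq 0$,
\EQN{
\|e^{i\tau D}\dot\Delta_0 g\|_{L^p} \;\lesssim\; (1+\tau)^{(d-1)|\frac12-\frac1p|}\|g\|_{L^p}.
}
For $\tau\leq 1$ this is trivial by Young's inequality; for $\tau\geq 1$, I pick $j\geq 0$ with $2^j\sim\tau$ and apply the Miyachi bound \eqref{eq:L1wave} $\|e^{i2^j D}\dot\Delta_0 g\|_{L^1}\lesssim 2^{j(d-1)/2}\|g\|_{L^1}$, absorbing the (harmless, smooth, compactly supported) discrepancy between $e^{i\tau D}$ and $e^{i2^jD}$ into a Schwartz convolution kernel. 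Duality gives the corresponding $L^\infty\to L^\infty$ bound, and Plancherel gives the trivial $L^2$ identity; Riesz–Thorin interpolation then delivers the general $L^p$ estimate.

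Combining the two factors yields
\EQN{
\|e^{-th(D)}\dot\Delta_k f\|_{L^p}\;\lesssim\; e^{-ct2^{2k}}(1+t2^k)^{(d-1)|\frac12-\frac1p|}\|\dot\Delta_k f\|_{L^p}.
}
To arrive at the claimed form, I rewrite $t2^k = (t2^{2k})\cdot 2^{-k}$ and use $k\leq 0$, so $2^{-k}\geq 1$, giving $(1+t2^k)\leq 2^{-k}(1+t2^{2k})$. Setting $s=t2^{2k}$, this bounds the polynomial factor by $2^{-k(d-1)|\frac12-\frac1p|}(1+s)^{(d-1)|\frac12-\frac1p|}$, and I absorb $(1+s)^{(d-1)|\frac12-\frac1p|}$ into $e^{-cs}$ at the cost of a slightly smaller constant $c'<c$ via $(1+s)^\alpha\lesssim e^{(c-c')s}$. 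This produces exactly \eqref{lowfresemi}.

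The main technical point is the wave-operator estimate on functions whose frequency lies \emph{below} the dyadic unit scale (i.e.\ $k\leq 0$). Miyachi's theorem is formulated at or above unit frequency, so the delicate step is the scaling argument that transfers it to the low-frequency regime and keeps precise track of how the time variable rescales, which is what ultimately produces the dispersive loss $2^{-(d-1)k|\frac12-\frac1p|}$ sharp enough to match the gain $e^{-ct2^{2k}}$ on the relevant time scales.
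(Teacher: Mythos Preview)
Your proposal is correct and follows essentially the same route as the paper: factor $e^{-th(D)}=e^{-t\widetilde h(D)}e^{\pm 2itD}$ via \eqref{hdef} and \eqref{l1estimate}, reduce the wave part to unit frequency by scaling, invoke Miyachi's $L^1$ bound \eqref{eq:L1wave} together with Plancherel and Riesz--Thorin, and finally absorb the polynomial $(1+t2^{2k})^{(d-1)|\frac12-\frac1p|}$ into the exponential. The only cosmetic difference is the order of operations: the paper first reduces to $p=1$ by interpolation/duality and then runs the wave estimate, whereas you prove the $L^p$ wave bound for all $p$ first and then combine; the ingredients and the final computation $(1+t2^k)\leq 2^{-k}(1+t2^{2k})$ are identical.
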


\begin{proof}
We only prove for $h=-\lambda_+=|\xi|^{2}- i|\xi|\sqrt{4-|\xi|^{2}}$ as the other case is similar. 
By Plancherel's equality, we get \eqref{lowfresemi} holds for $p=2$.  
By the Riesz-Thorin interpolation and then duality, it suffices to prove \eqref{lowfresemi} for $p=1$.

By \eqref{l1estimate} we get
\EQ{
\|e^{t\lambda_+ (D)}\dot{\Delta}_{k}f\|_{L^{1}}\les&  e^{-ct 2^{2k}}\norm{e^{-2it D}\dot \Delta_k f}_{L^1}\\
\les& e^{-ct 2^{2k}} 2^{-kd} \|e^{-i2^kt|D|} \dot{\Delta}_0 f_k\|_{L^1}
}
where $f_k(x)=f(x/2^k)$. If $|t2^k|\les 1$, then by Taylor expansion, we have
\EQ{
2^{-kd}\|e^{-i2^kt|D|} \dot{\Delta}_0 f_k\|_{L^1}\les 2^{-kd}\sum_{m=0}^\infty \frac{\norm{(-i2^kt|D|)^m \dot\Delta_0 f_k}_{L^1}}{m!}\les 2^{-kd}\norm{f_k}_{L^1}=\norm{f}_{L^1}.
}
If $|t2^k|\gg 1$, then by \eqref{eq:L1wave}, we get
\EQ{
2^{-kd}\|e^{-i2^kt|D|} \dot{\Delta}_0 f_k\|_{L^1}\les |t2^k|^{(d-1)/2}\cdot 2^{-kd}\|f_k\|_{L^1}=|t2^k|^{(d-1)/2}\cdot \|f\|_{L^1}.
}
Therefore, we obtain
\[
\|e^{t \lambda_+(D)} \dot{\Delta}_k f\|_{L^1} \lesssim e^{-c t 2^{2k}} 2^{-\frac{(d-1)k}{2}} (1 + 2^{2k} t)^{\frac{d-1}{2}} \|\dot \Delta_k f\|_{L^1}\les e^{-c' t 2^{2k}} 2^{-\frac{(d-1)k}{2}} \|\dot \Delta_k f\|_{L^1}
\]
and hence complete the proof of Lemma \ref{NS-low frequency semigroup}.
\end{proof}

In particular, we can derive all the estimates for the low frequency components.
Our objective is to establish the following proposition using the low-frequency semigroup.

\begin{lemma}[Low frequency estimates] \label{low}
Assume that $k_{0}>0, \, s \in \mathbb{R}$, $p \in [1, \infty]$. Let $z = (z_{1}, z_{2})$ satisfy 
\begin{equation}\label{pabolic-dis}
\left\{
\begin{aligned}
& \partial_{t}z_{1}+\Div z_{2}=f\\
&  \partial_{t}z_{2}-\mathcal{A} z_{2}+\nabla z_{1}=g, \\
& (z_{1},z_{2})\big|_{t=0}=(z_{1,0},z_{2,0}). \\
\end{aligned}
\right.
\end{equation}
Then there exists a constant $C > 0$, depending only on $k_{0}$, such that for all $t \geq 0$,
\begin{equation}\label{prop-estimate}
\bigl\|(z_{1}, z_{2})^{\ell}\bigr\|_{\tilde{L}_{t}^{\rho_{1}} 
(\dot{B}_{p,1}^{s + |1 - \frac{2}{p}| + \frac{2}{\rho_{1}}})}
\leq C
\bigl\|(z_{10}, z_{20})^{\ell}\bigr\|_{\dot{B}_{p,1}^{s}} +C 
\bigl\|(f, g)^{\ell}\bigr\|_{\tilde{L}_{t}^{1}(\dot{B}_{p,1}^{s})}.
\end{equation}
\end{lemma}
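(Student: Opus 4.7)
The plan is to derive the estimate by representing $(z_1,z_2)$ via the Duhamel formula using the explicit Green's function \eqref{Gtsymbol}, then reducing the action of $G(t)$ on low-frequency dyadic blocks to the scalar semigroups $e^{t\lambda_\pm(D)}$ to which Lemma~\ref{NS-low frequency semigroup} applies directly. First I would write
\EQN{
\begin{pmatrix} z_1 \\ z_2 \end{pmatrix}(t) = G(t)\begin{pmatrix} z_{1,0} \\ z_{2,0}\end{pmatrix} + \int_0^t G(t-\tau)\begin{pmatrix} f\\ g\end{pmatrix}(\tau)\,d\tau,
}
and observe that on any dyadic annulus $|\xi|\sim 2^k$ with $2^k\ll 2$ (which is ensured by choosing $k_0$ a fixed negative or mildly positive integer; otherwise absorb the finitely many transition blocks into the constant), the entries of $\widehat G(\xi,t)$ are linear combinations of $e^{t\lambda_\pm(\xi)}$ with coefficient multipliers $\lambda_\pm/(\lambda_+-\lambda_-)$ and $\xi/(\lambda_+-\lambda_-)$. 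At low frequency $\lambda_\pm\simeq -\tfrac{|\xi|^2}{2}\pm i|\xi|$ and $\lambda_+-\lambda_-\simeq -2i|\xi|$, so these coefficient multipliers are smooth on the annulus and satisfy uniform Mikhlin/Bernstein-type bounds; in particular their $L^p\to L^p$ operator norms on $\dot\Delta_k$-localized functions are uniformly bounded in $k\leq k_0$.

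The second step is to combine this symbol decomposition with Lemma~\ref{NS-low frequency semigroup} blockwise. For each $k\leq k_0$ this yields
\EQN{
\|\dot\Delta_k G(t)(z_{1,0},z_{2,0})\|_{L^p} \lesssim e^{-ct\,2^{2k}}\,2^{-(d-1)k|\frac{1}{2}-\frac{1}{p}|}\,\|\dot\Delta_k(z_{1,0},z_{2,0})\|_{L^p},
}
and analogously for $\dot\Delta_k G(t-\tau)(f,g)(\tau)$. Taking the $L^{\rho_1}_t$-norm and using
\EQN{
\|e^{-ct\,2^{2k}}\|_{L^{\rho_1}_t}\sim 2^{-2k/\rho_1},
}
one gets, after multiplying by $2^{k(s+|1-2/p|+2/\rho_1)}$, a clean block bound by $2^{ks}\|\dot\Delta_k(z_{1,0},z_{2,0})\|_{L^p}$. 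In dimension $d=3$, the loss $(d-1)|1/2-1/p|=2|1/2-1/p|=|1-2/p|$ is exactly what appears in \eqref{prop-estimate}.

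For the Duhamel contribution I would apply Minkowski and Young's convolution inequality in time to the kernel $e^{-c(t-\tau)2^{2k}}$, obtaining
\EQN{
\bigl\|2^{k(s+|1-\frac{2}{p}|+\frac{2}{\rho_1})}\,\dot\Delta_k \textstyle\int_0^t G(t-\tau)(f,g)\,d\tau\bigr\|_{L^{\rho_1}_t L^p}\lesssim 2^{ks}\|\dot\Delta_k(f,g)\|_{L^1_t L^p},
}
and then summing $\ell^1$ over $k\leq k_0$ yields the bound by $\|(f,g)^\ell\|_{\tilde L^1_t(\dot B^s_{p,1})}$. Combining the homogeneous and inhomogeneous contributions closes the estimate.

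The main technical obstacle is verifying the uniform $L^p$-boundedness of the zero-order multipliers appearing in $\widehat G(\xi,t)$ on each low-frequency dyadic block: although their behavior near $|\xi|=2$ is singular, the low-frequency cutoff pushes us away from this transition, and on $|\xi|\sim 2^k$ with $2^k\ll 2$ the rescaled symbols $m(2^k\cdot)$ are uniformly smooth, so Bernstein's inequality (applied to their inverse Fourier transforms) gives uniform $L^1$ kernel bounds and hence uniform $L^p\to L^p$ estimates. The second, more bookkeeping obstacle is ensuring the above time integrations are valid in the tilde-space $\tilde L^{\rho_1}_t$: this is the standard semigroup-with-Besov-norm argument in which one commutes $L^{\rho_1}_t$ with $\ell^1_k$ via the definition of $\tilde L^{\rho_1}_t(\dot B^{\cdot}_{p,1})$, so no essential new difficulty arises.
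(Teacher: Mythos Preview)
Your proposal is correct and follows essentially the same route as the paper: Duhamel formula with the explicit Green's function $G(t)$, decomposition of the symbol matrix into $e^{t\lambda_\pm}$ with bounded zero-order coefficient multipliers on each low-frequency block, application of Lemma~\ref{NS-low frequency semigroup}, then the standard $L^{\rho_1}_t$ and $\ell^1_k$ bookkeeping. The one point the paper handles slightly more carefully is the transition region $k_1<k\lesssim k_0$ (i.e., blocks with $|\xi|$ near $2$ where $\lambda_+-\lambda_-$ degenerates): rather than simply ``absorbing finitely many blocks into the constant,'' the paper invokes a separate argument from \cite{CMZ2010} to verify that the pointwise block estimate \eqref{semi} still holds there with the same exponential decay $e^{-ct\,2^{2k}}$, which is needed for the time integration to produce the correct $2^{-2k/\rho_1}$ factor.
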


\begin{proof}
Applying the Fourier transform yields the explicit solution $z$ to \eqref{pabolic-dis}, we have
\begin{align*}
z = \begin{pmatrix} z_1 \\ z_2 \end{pmatrix} 
&= G(t)\begin{pmatrix} z_{1,0} \\ z_{2,0} \end{pmatrix} 
+ \int_{0}^{t} G(t-s)\begin{pmatrix} f \\ g \end{pmatrix}  \mathrm{d}s,
\end{align*}
where $G(t)$ is given by \eqref{Gtsymbol}.

We denote $k_1=\max\{k: 2^{2k+2}-2^k<0\}$. Then direct computation shows that for $|\xi|\sim 2^k\leq 2^{k_1}$
\EQ{
|\nabla^\alpha_\xi (\frac{\xi}{\lambda_{+}-\lambda_{-}},\frac{\lambda_{\pm}}{\lambda_{+}-\lambda_{-}}
)|\lesssim |\xi|^{-\alpha},\quad|\alpha|\geq 0. 
}
Thus $\frac{\xi}{\lambda_{+}-\lambda_{-}}\psi_k(\xi), \frac{\lambda_{\pm}}{\lambda_{+}-\lambda_{-}}\psi_k(\xi)$ are $L^p$ multipliers for $1\leq p\leq \infty$ with uniform norms in $k$. By Lemma \ref{le-e2}, we   
have for $k\leq k_1$, we obtain
\begin{equation}\label{semi}
\begin{aligned}
\|(\dot{\Delta}_k\dot{S}_{k_0} z_1(t), \dot{\Delta}_k \dot{S}_{k_0}z_2(t))\|_{L^p} 
&\lesssim e^{-ct 2^{2k}} 2^{-2k\left|\frac{1}{2}-\frac{1}{p}\right|} 
\|(\dot{\Delta}_k \dot{S}_{k_0}z_{10}, \dot{\Delta}_k \dot{S}_{k_0}z_{20})\|_{L^p} \\
&\quad + \int_0^t e^{-c(t-s) 2^{2k}} 2^{-2k\left|\frac{1}{2}-\frac{1}{p}\right|} 
\|(\dot{\Delta}_k \dot{S}_{k_0}f(s), \dot{\Delta}_k \dot{S}_{k_0}g(s))\|_{L^p}  \mathrm{d}s.
\end{aligned}
\end{equation}
For $k_{1}< k\lesssim k_0$, due to the availability of both upper and lower bounds on the frequency variable, using the argument in Proposition 4.4 (b) of \cite{CMZ2010}, we can also establish inequality \eqref{semi}. 
 
Taking the $L^{\rho_1}$-norm in time for \eqref{semi} yields
\begin{equation*}
\begin{aligned}
&2^{2k|\frac{1}{2}-\frac{1}{p}|} \|(\dot{\Delta}_k 
\dot{S}_{k_0}z_1, \dot{\Delta}_k \dot{S}_{k_0}z_2)\|_{L_t^{\rho_1}(L^p)} \\
&\quad \lesssim \left( \frac{1 - e^{-c t \rho_1 
2^{2k}}}{c \rho_1 2^{2k}} \right)^{\!\!\frac{1}{\rho_1}} 
\|(\dot{\Delta}_k \dot{S}_{k_0}z_{10}, \dot{\Delta}_k \dot{S}_{k_0}z_{20})\|_{L^p} \\
&\quad + \left( \frac{1 - e^{-c t \rho_2 2^{2k}}}
{c \rho_2 2^{2k}} \right)^{\!\!\frac{1}{\rho_2}} \|(\dot{\Delta}_k \dot{S}_{k_0}f, \dot{\Delta}_k \dot{S}_{k_0}g)
\|_{L_t^\rho(L^p)},
\end{aligned}
\end{equation*}
where $\frac{1}{\rho_2} = 1 + \frac{1}{\rho_1} - \frac{1}{\rho}$. Multiplying by 
$2^{k(s + 2/\rho_1)}$ and taking the $\ell^1$-norm gives
\begin{equation*}
\begin{aligned}
&\|(z_1,z_2)^{\ell}\|_{\tilde{L}_t^{\rho_1}(\dot{B}_{p,1}^{s + |1 - 2/p| + 2/\rho_1})} \\
&\quad \lesssim \sum_{k} \left( \frac{1 - e^{-c t \rho_1 2^{2k}}}{c \rho_1} 
\right)^{\!\!\frac{1}{\rho_1}} 2^{k s} \|(\dot{\Delta}_k \dot{S}_{k_0}z_{10}, \dot{\Delta}_k \dot{S}_{k_0}z_{20})\|_{L^p} \\
&\quad + \sum_{k } \left( \frac{1 - e^{-c t \rho_2 2^{2k}}}
{c \rho_2} \right)^{\!\!\frac{1}{\rho_2}} 2^{k(s - 2 + 2/\rho)} 
\|(\dot{\Delta}_k \dot{S}_{k_0}f, \dot{\Delta}_k\dot{S}_{k_0} g)\|_{L_t^\rho(L^p)} \\
&\quad \lesssim \left( \frac{1 - e^{-c t \rho_1}}{c \rho_1} 
\right)^{\!\!\frac{1}{\rho_1}} \|(z_{10},z_{20})^{\ell}\|_{\dot{B}_{p,1}^{s}}+ 
\left( \frac{1 - e^{-c t \rho_2}}{c \rho_2} 
\right)^{\!\!1 + \frac{1}{\rho_1} - \frac{1}{\rho}} 
\|(f,g)^{\ell}\|_{\tilde{L}_t^\rho(\dot{B}_{p,1}^{s - 2 + 2/\rho})}.
\end{aligned}
\end{equation*}
In particular, 
\begin{equation*}
\begin{aligned}
\|(z_1,z_2)^{\ell}\|_{\tilde{L}_t^{\rho_1}(\dot{B}_{p,1}^{s + |1 - 2/p| +
 2/\rho_1})} \lesssim 
\|(z_{10},z_{20})^{\ell}\|_{\dot{B}_{p,1}^{s}} + \|(f,g)^{\ell}
\|_{\tilde{L}_t^1(\dot{B}_{p,1}^{s})}.
\end{aligned}
\end{equation*}
Then we establish the desired inequality.
\end{proof}

\section{Priori estimates}

We assume $(a,u)$ is a solution to \eqref{CNS}. In what follows, let us define
$\mathcal{Y} := \mathcal{Y}^\ell + \mathcal{Y}^h$,
where
\EQ{
\begin{aligned}
\mathcal{Y}^\ell :=& \|m^{\ell}\|_{\tilde{L}_{t}^{\infty}(\dot{B}_{q,1}^{-1 + \frac{3}{q}}) \cap \tilde{L}_{t}^{2}(\dot{B}_{q,1}^{-1 + \frac{5}{q}}) \cap \tilde{L}_{t}^{1}(\dot{B}_{q,1}^{\frac{5}{q}})},\\
\mathcal{Y}^h :=& \|m^h\|_{\tilde{L}_{t}^{\infty}(\dot{B}_{p,1}^{-1 + \frac{3}{p}}) \cap \tilde{L}_{t}^{1}(\dot{B}_{p,1}^{\frac{3}{p}})}
\end{aligned}
}
and
\EQ{
\begin{aligned}
&\mathcal{X} := \|(a,u)^{\ell}\|_{\tilde{L}_{t}^{\infty}(\dot{B}_{q,1}^{-1 + \frac{3}{q}}) \cap \tilde{L}_{t}^{2}(\dot{B}_{q,1}^{-1 + \frac{5}{q}}) \cap \tilde{L}_{t}^{1}(\dot{B}_{q,1}^{\frac{5}{q}})} \\
& \  \ \quad +\|a^{h}\|_{\tilde{L}_{t}^{\infty}(\dot{B}_{p,1}^{\frac{3}{p}}) \cap \tilde{L}_{t}^{1}(\dot{B}_{p,1}^{\frac{3}{p}})} + \|u^{h}\|_{\tilde{L}_{t}^{\infty}(\dot{B}_{p,1}^{-1 + \frac{3}{p}}) \cap \tilde{L}_{t}^{1}(\dot{B}_{p,1}^{1 + \frac{3}{p}})},\\
&\mathcal{X}_0:=\|(a_{0},m_{0})^{\ell}\|_{\dot{B}_{q,1}^{-3
+\frac{7}{q}}}+\|(a_{0},u_0)^h\|_{\X_p}.
\end{aligned}
}
Under the conditions of Theorem~\ref{gwp}, we easily obtain the following estimates
\begin{equation*}
\begin{aligned}
&\|m\|_{\tilde{L}_{t}^{\infty}(\dot{B}_{p,1}^{-1+\frac{3}{p}})}
\lesssim  \mathcal{Y},\ \ \|a\|_{\tilde{L}_{t}^{\infty}(\dot{B}_{p,1}^{-1+\frac{3}{p}})}
\lesssim \mathcal{X},\ \ \|u\|_{\tilde{L}_{t}^{\infty}(\dot{B}_{p,1}^{-1+\frac{3}{p}})}
\lesssim  \mathcal{X},\\
&\|u\|_{\tilde{L}_{t}^{1}(\dot{B}_{p,1}^{1+\frac{3}{p}})}
\lesssim \mathcal{X}, \  \ \|a\|_{\tilde{L}_{t}^{2}(\dot{B}_{p,1}^{\frac{3}{p}})}
\lesssim \mathcal{X}, \  \ \|u\|_{\tilde{L}_{t}^{2}(\dot{B}_{p,1}^{\frac{3}{p}})}
\lesssim  \mathcal{X}, \ \ \|a\|_{\tilde{L}_{t}^{\infty}(\dot{B}_{p,1}^{\frac{3}{p}})}
\lesssim\mathcal{X},
\end{aligned}
\end{equation*}
which we will use below.
The main purpose of this section is to establish the following uniform priori estimates:
\begin{prop}\label{priori}
Assume $2 \leq p< 6, \,2 \leq q \leq p \leq 2q$, and
 that $(q,\,\, p)$ satisfies
\begin{align*}
\frac{3}{q}-\frac{3}{p}\leq 1<\frac{2}{q}+\frac{3}{p}.
\end{align*}  Then we have 
\begin{equation*}
\begin{aligned}
\mathcal{X}\lesssim\mathcal{X}_0 +  \mathcal{X}^{2}.
\end{aligned}
\end{equation*}
\end{prop}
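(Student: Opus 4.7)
The strategy is to split the proof into three steps: a low-frequency bound using the momentum formulation and the parabolic-dispersive semigroup estimate of Lemma~\ref{low}, a high-frequency bound using the effective-velocity method, and a transfer step converting between the $m$- and $u$-variables via Proposition~\ref{momentum}.

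For the low-frequency part, I plan to rewrite \eqref{CNS} in the momentum variable $m=(1+a)u$. A direct computation yields
\begin{equation*}
\partial_t a+\Div m=0,\qquad \partial_t m-\mathcal{A}m+\nabla a=\Div H_2(a,m)+\nabla H_1(a),
\end{equation*}
with $H_1(a)=O(a^2)$ and $H_2(a,m)$ at least bilinear. The essential feature is that every nonlinear term now appears in divergence or gradient form: one derivative can be moved onto the low-frequency projector, producing a regularity gain at low frequency that exactly compensates the dispersive loss $|1-2/q|$ in Lemma~\ref{low}. Applying Lemma~\ref{low} with $s=-3+7/q$ and $p=q$, $\mathcal{Y}^\ell$ is then controlled by $\mathcal{X}_0$ plus $\|(\Div H_2,\nabla H_1)^\ell\|_{\tilde L^1_t\dot B_{q,1}^{-3+7/q}}$, and each bilinear term on the right is dismantled by Bony's paraproduct decomposition together with Lemma~\ref{prodlemma} and Lemma~\ref{linestimate}. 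The window $2\leq q\leq p\leq 2q$, $3/q-3/p\leq 1<2/q+3/p$ is precisely what is needed to make the Bernstein embeddings (for the mixed terms linking the $L^q$-low and $L^p$-high frameworks) and every product exponent admissible.

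For the high-frequency part, I plan to follow the effective-velocity approach of \cite{H20111,H20112}: introduce $w=u+\nabla(-\mathcal{A})^{-1}a$ (or the analogous variant adapted to the Lam\'e operator), so that at the linear level $(a^h,w^h)$ decouples into a damped transport equation for $a^h$ and a heat-type equation for $w^h$. Lemma~\ref{transport} produces the bound for $a^h$ in $\tilde L^\infty_t\dot B_{p,1}^{3/p}\cap\tilde L^1_t\dot B_{p,1}^{3/p}$, and Lemma~\ref{heatm} produces the bound for $w^h$, hence for $u^h$, in $\tilde L^\infty_t\dot B_{p,1}^{-1+3/p}\cap\tilde L^1_t\dot B_{p,1}^{1+3/p}$. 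The high-high nonlinear products $u^h\cdot\nabla u^h$, $I(a)\mathcal{A}u$, $k(a)\nabla a$ are bounded by Lemma~\ref{prodlemma}; the sharp restriction $p<6$ enters through the bilinear estimate $\|fg\|_{\dot B_{p,1}^{-1+3/p}}\lesssim\|f\|_{\dot B_{p,1}^{3/p}}\|g\|_{\dot B_{p,1}^{-1+3/p}}$, whose admissibility condition $-1+6/p>3\max\{0,2/p-1\}$ breaks exactly at $p=6$.

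To match the momentum-variable bound to the velocity-variable norm in $\mathcal{X}$, I will write $u^\ell=m^\ell-(au)^\ell$ and control $(au)^\ell$ in each of the three $\tilde L^\rho$-norms of $\mathcal{X}^\ell$ by Lemma~\ref{prodlemma}, yielding quadratic corrections absorbed into $\mathcal{X}^2$; this is precisely the equivalence asserted in Proposition~\ref{momentum}. Summing the three parts will then deliver $\mathcal{X}\lesssim\mathcal{X}_0+\mathcal{X}^2$. The main obstacle is the high-high to low interaction in the momentum equation: in the $L^q$-based low-frequency framework with $q>2$, the dispersive loss $|1-2/q|$ cannot be absorbed without the divergence/gradient structure of the momentum nonlinearity, and this is the sole reason for introducing the additional low-frequency condition on $m_0$. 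A secondary delicate point will be verifying that the joint conditions on $(p,q)$ are simultaneously compatible with every bilinear admissibility requirement arising in the low-low, low-high, and high-high regimes.
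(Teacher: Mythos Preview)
Your proposal is correct and follows essentially the same route as the paper: momentum formulation plus Lemma~\ref{low} at low frequency, the effective-velocity decoupling (Lemmas~\ref{heatm} and~\ref{transport}) at high frequency, and Proposition~\ref{momentum} to pass between $m^\ell$ and $u^\ell$. One small refinement: the low-frequency bilinear work in the paper is carried not by Lemma~\ref{prodlemma} alone but by the mixed $L^q$--$L^p$ paraproduct Lemma~\ref{lem key} and the commutator Lemma~\ref{commo} (the latter to handle pieces like $(T_u a^h)^\ell$ at the sharp regularity); your phrase ``Bernstein embeddings for the mixed terms'' captures the content of Lemma~\ref{lem key}, but you should be aware that the commutator device $[\dot S_{k_0},T_f]$ is an additional, non-negligible ingredient.
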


The above proposition follows from Propositions \ref{lowfre} and \ref{highfre} below, which concern low frequency and high frequency analysis respectively.

\subsection{The low frequency analysis}
We prove
\begin{prop}\label{lowfre}
Assume $2 \leq p< 6,\,\,2 \leq q \leq p \leq 2q$, and that $(q,\,\, p)$ satisfies
\begin{align*}
\frac{3}{q}-\frac{3}{p}\leq 1<\frac{2}{q}+\frac{3}{p},
\end{align*}  then we have the following low estimates
\begin{equation*}
\begin{aligned}
\|(a, u)^{\ell}\|_{\tilde{L}_{t}^{\infty}(\dot{B}_{q,1}^{-1
+\frac{3}{q}})\cap\tilde{L}_{t}^{2}(\dot{B}_{q,1}^{-1+\frac{5}{q}})
\cap\tilde{L}_{t}^{1}(\dot{B}_{q,1}^{\frac{5}{q}})} \lesssim\mathcal{X}_0 + \mathcal{X}^{2}.
\end{aligned}
\end{equation*}
\end{prop}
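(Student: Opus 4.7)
My plan is to work with the momentum unknown $(a,m)$, where $m=(1+a)u$, in order to exploit the divergence structure of the nonlinear forcing at low frequency. Substituting $u=m-I(a)m$ into \eqref{CNS} produces the linear system
\begin{equation*}
\partial_t a+\Div m=0,\qquad \partial_t m-\mathcal{A}m+\nabla a=h(a,m),
\end{equation*}
with
\begin{equation*}
h(a,m)=-\Div(m\otimes u)-\mathcal{A}(I(a)m)-\nabla\bigl(P(1+a)-P(1)-a\bigr).
\end{equation*}
Each of the three summands is the spatial derivative of a quadratic-or-higher expression in $(a,u)$, so $h$ carries an overall derivative. This gain is what will compensate for the regularity loss $|1-2/q|$ produced by the dispersive part of the low-frequency semigroup in Lemma~\ref{low}.

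I would then apply Lemma~\ref{low} with $p=q$ and $s=-3+7/q$ to get
\begin{equation*}
\|(a,m)^\ell\|_{\tilde L_t^{\rho_1}(\dot B_{q,1}^{-2+5/q+2/\rho_1})}\lesssim \|(a_0,m_0)^\ell\|_{\dot B_{q,1}^{-3+7/q}}+\|h^\ell\|_{\tilde L_t^1(\dot B_{q,1}^{-3+7/q})}.
\end{equation*}
The choices $\rho_1=2$ and $\rho_1=1$ match exactly the target indices $-1+5/q$ and $5/q$ appearing in $\mathcal{X}$, while $\rho_1=\infty$ yields $\dot B_{q,1}^{-2+5/q}$, which embeds at low frequency into $\dot B_{q,1}^{-1+3/q}$ as soon as $q\geq 2$ (since at low frequency, a larger Besov index is a weaker norm). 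Proposition~\ref{momentum} then converts $m^\ell$ back into $u^\ell$ at the cost of an error of size $\mathcal{X}^2$, so the proof reduces to bounding the data contribution by $\mathcal{X}_0$ (immediate) and the forcing term by $\mathcal{X}^2$.

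The substantive step is the forcing estimate $\|h^\ell\|_{\tilde L_t^1(\dot B_{q,1}^{-3+7/q})}\lesssim \mathcal{X}^2$. Writing $h=\nabla\tilde h$ and using that low-frequency projection commutes with Fourier multipliers, this is equivalent to controlling each quadratic piece of $\tilde h$ in $\tilde L_t^1(\dot B_{q,1}^{-2+7/q})$ (with one extra index for the $\mathcal{A}(I(a)m)$ piece). On each piece I run Bony's decomposition. The decisive part is high-frequency times high-frequency: for instance, on the remainder $R(u^h,u^h)$ I bound each dyadic block via H\"older $L^p\cdot L^p\hookrightarrow L^{p/2}$ and then pass to $L^q$ at low frequency by Bernstein, producing a prefactor $2^{3k(2/p-1/q)}$. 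The resulting geometric sum $\sum_{k\leq k_0}2^{k(-2+4/q+6/p)}$ converges precisely under $\frac{2}{q}+\frac{3}{p}>1$, which is the second inequality in \eqref{condition}. Low-low and the mixed low-high / high-low contributions are controlled by the standard product estimates of Lemma~\ref{prodlemma}, combined with interpolation in time between the $\tilde L_t^\infty$ and $\tilde L_t^1$ norms appearing in $\mathcal X$ to reach $\tilde L_t^2\dot B_{p,1}^{3/p}$. The first inequality $\frac{3}{q}-\frac{3}{p}\leq 1$ in \eqref{condition} enters at exactly this point, ensuring the embeddings used to combine $q$-based low-frequency factors with $p$-based high-frequency factors are admissible. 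The composite quantities $I(a)$ and $P(1+a)-P(1)-a$ are handled by Lemma~\ref{linestimate}.

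The main obstacle I anticipate is the bookkeeping of the many bilinear pieces, especially for $\mathcal{A}(I(a)m)$, where pairing the $L_t^1$ high-derivative regularity of $u^h$ with the $L_t^\infty$ low-derivative regularity of $a^h$ forces me to first interpolate both down to $\tilde L_t^2\dot B_{p,1}^{3/p}$ before any product estimate can be applied. Tracking which pair of norms each term consumes, so that every bilinear piece collapses to $\mathcal{X}^2$ under the stated two-sided condition on $(p,q)$, is where the argument will require the most care.
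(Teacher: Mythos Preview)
Your proposal is correct and follows essentially the same strategy as the paper: pass to the momentum pair $(a,m)$, apply Lemma~\ref{low} at regularity $s=-3+\tfrac{7}{q}$, bound $\|h^\ell\|_{\tilde L^1_t(\dot B^{-3+7/q}_{q,1})}\lesssim\mathcal{X}^2$ via Bony's decomposition, and return to $u^\ell$ through Proposition~\ref{momentum}; you have also correctly isolated the roles of the two inequalities in \eqref{condition}. One technical device the paper leans on that you did not name explicitly is the commutator splitting $(T_f g^h)^\ell=[\dot S_{k_0},T_f]g^h+T_f\dot S_{k_0}g^h$ together with Lemma~\ref{commo}, which is how the paper handles the paraproduct pieces where the second factor is purely high-frequency; you will want this in your bookkeeping.
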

\begin{proof}
We will prove Proposition \ref{lowfre} using results from Subsection \eqref{Momentum estimates}, 
Subsection \eqref{linear part} and Subsection \eqref{nonlinear part}.
\end{proof}
In the low frequency analysis, due to the $high\times high\to low$ interaction, the loss of regularities is problematic in the $L^q$-framework. In order to overcome that, we apply the momentum formula which achieves some null structure in the nonlinear interactions. Specifically, for the momentum $m = \rho u$, then system for $(a, m)$ reads
\begin{equation}\label{comnscauchy}
\left\{
\begin{aligned}
& \partial_t a + \mathrm{div}\, m = 0, \\
& \partial_t m - \mathcal{A} m + \nabla a = h(a, m),
\end{aligned}
\right.
\end{equation}
where $\mathcal{A} m = \mu\Delta m + (\mu+ \lambda) \nabla \mathrm{div} m$, and $h(a, m)= \sum_{i=1}^{3} h_i(a, m)$ with
\EQ{
\left\{
\begin{aligned}
h_1(a, m) &= \mathrm{div}\bigl( (I(a) - 1) m \otimes m \bigr), \\[1mm]
h_2(a, m) &= -\mathcal{A} (I(a) m), \\[1mm]
h_3(a, m)&= \nabla(G(a)a).
\end{aligned} \right.
\label{nonlinear}
}
We use the following fact
\begin{equation*}
\begin{aligned}
&\nabla P(\rho)=\nabla(P'(1)a+G(a)a):=\nabla a+h_3.
\end{aligned}
\end{equation*}

\subsubsection{ \bf Momentum estimates}\label{Momentum estimates}

We first state the following proposition concerning the relationship between $\mathcal{X}$ and $\mathcal{Y}$.
\begin{prop}\label{momentum}
Let $2 \leq p< 6,\,\,2 \leq q \leq p \leq 2q$, and that $(q,\,\, p)$ satisfy
\begin{align*}
\frac{3}{q}-\frac{3}{p}\leq 1<\frac{2}{q}+\frac{3}{p}.
\end{align*} 
Suppose $(a, u)$ is a solution to system \eqref{CNS} with $\mathcal{X}\lesssim 1$ and 
$\|a\|_{\tilde{L}_{t}^{\infty}(\dot{B}_{p,1}^{3/p})} \lesssim 1$. 
Then we have
\begin{align*}
\mathcal{Y}\leq C\mathcal{X}.
\end{align*}
Moreover, the following estimate holds that
\begin{equation*}
\|u^{\ell}\|_{\tilde{L}_{t}^{\infty}(\dot{B}_{q,1}^{-1
+\frac{3}{q}})\cap\tilde{L}_{t}^{2}(\dot{B}_{q,1}^{-1+\frac{5}{q}})
\cap\tilde{L}_{t}^{1}(\dot{B}_{q,1}^{\frac{5}{q}})}
\leq  \mathcal{Y}^{\ell}+C\mathcal{X}^{2}.
\end{equation*}
\end{prop}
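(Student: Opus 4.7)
The strategy is to exploit the algebraic identity $m = u + au$, which reduces both claims to a single bilinear estimate for the product $au$. Writing $m^{\ell} = u^{\ell} + (au)^{\ell}$ and $m^{h} = u^{h} + (au)^{h}$, and conversely $u^{\ell} = m^{\ell} - (au)^{\ell}$, it suffices to establish
\[
\|(au)^{\ell}\|_{\tilde{L}_{t}^{\infty}(\dot{B}_{q,1}^{-1+\frac{3}{q}}) \cap \tilde{L}_{t}^{2}(\dot{B}_{q,1}^{-1+\frac{5}{q}}) \cap \tilde{L}_{t}^{1}(\dot{B}_{q,1}^{\frac{5}{q}})} + \|(au)^{h}\|_{\tilde{L}_{t}^{\infty}(\dot{B}_{p,1}^{-1+\frac{3}{p}}) \cap \tilde{L}_{t}^{1}(\dot{B}_{p,1}^{\frac{3}{p}})} \lesssim \mathcal{X}^{2}.
\]
Granting this, the claim $\mathcal{Y} \leq C\mathcal{X}$ follows from the triangle inequality together with $\mathcal{X} \lesssim 1$ (which absorbs the quadratic factor), while the claim $\|u^{\ell}\|_{\cdots} \leq \mathcal{Y}^{\ell} + C \mathcal{X}^{2}$ is immediate from $u^{\ell} = m^{\ell} - (au)^{\ell}$.

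For the bilinear estimate, I would apply Bony's decomposition $au = T_{a} u + T_{u} a + R(a,u)$, split each factor as $a = a^{\ell} + a^{h}$, $u = u^{\ell} + u^{h}$, and use Lemma \ref{prodlemma} together with the critical embedding $\dot{B}_{p,1}^{3/p} \hookrightarrow L^{\infty}$ (this is where the hypothesis $\|a\|_{\tilde{L}_{t}^{\infty}(\dot{B}_{p,1}^{3/p})} \lesssim 1$ enters). For the high-frequency output norm and for the paraproduct pieces entering the low-frequency output norm, the analysis is routine: in each such term at least one factor is frequency-localized in a way that carries the summability, and Bernstein's inequality on the low-frequency dyadic blocks allows us to pass between the $L^{q}$-based norms used for $(a,u)^{\ell}$ and the $L^{p}$-based norms used for $(a,u)^{h}$ at a cost summable over $k \leq k_{0}$ thanks to $q \leq p$.

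The delicate piece is the $high \times high \to low$ remainder $R(a^{h}, u^{h})^{\ell}$, the only term for which neither input is low-frequency while the output must be controlled in an $L^{q}$-based norm. For each low-frequency block $\dot\Delta_{j}$ with $j \leq k_{0}$, I would estimate $\|\dot\Delta_{j} R(a^{h}, u^{h})\|_{L^{q}}$ by first upgrading from $L^{p/2}$ to $L^{q}$ via Bernstein at cost $2^{3j(2/p - 1/q)}$ (which requires precisely $p \leq 2q$), then bounding the $L^{p/2}$ norm of the remainder by H\"older combined with a $j$-weighted sum of high-frequency $L^{p}$ pieces. Summing against the Besov weights $2^{js}$ for the relevant $s$ converges since the combined exponent is strictly positive on $j \leq k_{0}$; this is exactly ensured by the dimensional conditions $\frac{3}{q} - \frac{3}{p} \leq 1 < \frac{2}{q} + \frac{3}{p}$.

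The main obstacle is, unsurprisingly, this $high \times high \to low$ term, which is the very interaction whose poor behavior in $L^{p}$-based Besov spaces at low frequency motivates the momentum formulation in the analysis of the equation itself. In the present change of unknowns the analogous obstruction is benign in that it costs only a summability factor, not a derivative, and the balance conditions on $(p,q)$ are calibrated exactly so that Bernstein absorbs the mismatch between the $L^{p}$-based high-frequency norms and the $L^{q}$-based low-frequency norms.
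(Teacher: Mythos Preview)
Your approach is correct and follows the same overall line as the paper: reduce both claims to a bilinear bound on $au$ in the resolution norms, and treat this by Bony's decomposition together with the $L^q$--$L^p$ paraproduct/remainder estimates (Lemma~\ref{lem key}) under $p\le 2q$. Two points of comparison are worth noting.

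First, for the ``moreover'' part the paper does \emph{not} use your identity $u^\ell=m^\ell-(au)^\ell$; instead it writes $u=m-I(a)m$ with $I(a)=a/(1+a)$ and estimates $(I(a)m)^\ell$, which forces the composition lemma (Lemma~\ref{linestimate}) into play and leads to a longer argument. Your route is genuinely simpler here, since the same bilinear bound on $(au)^\ell$ already established in Step~1 closes Step~2 immediately.

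Second, within Step~1 the paper singles out the paraproduct piece $(T_u a^h)^\ell$ in the $\tilde L^1_t(\dot B_{q,1}^{5/q})$ norm---not the remainder---as the term needing extra care, and treats it via the commutator decomposition $(T_u a^h)^\ell=[S_{k_0},T_u]a^h+T_u S_{k_0}a^h$ together with Lemma~\ref{commo}. Your blanket ``routine'' for the paraproducts is still justified: since each summand of $T_u a^h$ has frequency near that of $a^h$, the low-frequency projection retains only finitely many blocks near $k_0$, so the regularity index $5/q$ can be traded for $-1+3/q$ and Lemma~\ref{lem key} with $m_1=1\ge 3/q-3/p$ applies directly. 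It is worth making this finite-block observation explicit, since without it one cannot pass from $\dot B_{q,1}^{5/q}$ to a lower index on low frequencies. The remainder $R(a^h,u^h)^\ell$ that you flag as ``delicate'' is in fact the more standard piece here; the paper dispatches it in one line by Lemma~\ref{lem key} and the condition $p\le 2q$, exactly as you outline.
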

\begin{proof}
 Defining \(I(a) = \frac{a}{a+1}\), we establish the fundamental relations
\[
m = u+au, \quad u = m - I(a)m.
\]
Applying Lemma \ref{prodlemma}, we have
\EQ{\label{m1}
\begin{aligned}
\|m\|_{\tilde{L}_t^2(\dot{B}_{p,1}^{3/p})} 
\leq& \|au + u\|_{\tilde{L}_t^2(\dot{B}_{p,1}^{3/p})} \\
 \lesssim& \bigl(1 + \|a\|_{\tilde{L}_t^{\infty}(\dot{B}_{p,1}^{3/p})}\bigr)
 \|u\|_{\tilde{L}_t^2(\dot{B}_{p,1}^{3/p})} \\
 \lesssim& \mathcal{X}(1 + \mathcal{X})\leq C\mathcal{X}.
\end{aligned}
}

\noindent \textbf{Step 1:  To estimate \(\mathcal{Y} \leq C\mathcal{X}\).}  

\(\bullet\) \textbf{High-frequency estimate for} 
\(\|m^h\|_{\tilde{L}_t^{\infty}(\dot{B}_{p,1}^{-1+3/p}) \cap \tilde{L}_t^1(\dot{B}_{p,1}^{3/p})}\). 

Since \(m = (a+1)u\),  for \(2 \leq p < 6\), Lemma \ref{prodlemma} yields
\EQ{\label{mml}
\begin{aligned}
\mathcal{Y}^h 
\lesssim& \|u^h\|_{\tilde{L}_t^{\infty}(\dot{B}_{p,1}^{3/p-1})} + 
\|a\|_{\tilde{L}_t^{\infty}(\dot{B}_{p,1}^{3/p})} 
\|u\|_{\tilde{L}_t^{\infty}(\dot{B}_{p,1}^{3/p-1})} \\
&\quad + \|u^h\|_{\tilde{L}_t^1(\dot{B}_{p,1}^{3/p+1})} 
+ \|a\|_{\tilde{L}_t^2(\dot{B}_{p,1}^{3/p})} 
\|u\|_{\tilde{L}_t^2(\dot{B}_{p,1}^{3/p})}\\
\lesssim& \mathcal{X} + \mathcal{X}^2.
\end{aligned}
}

\(\bullet\) \textbf{Low-frequency estimate for} \(\|m^\ell\|_{\tilde{L}_t^{\infty}(\dot{B}_{q,1}^{-1+3/q}) \cap \tilde{L}_t^2(\dot{B}_{q,1}^{-1+5/q}) \cap \tilde{L}_t^1(\dot{B}_{q,1}^{5/q})}\).

Using Lemma~\ref{lem key} under the conditions \(2 \leq p < 6\), \(\frac{3}{q} - \frac{3}{p} \leq 1\), and  \(q \leq p \leq 2q\), we have
\EQ{\label{au1}
\begin{aligned}
\|(au)^\ell\|_{\tilde{L}_t^{\infty}(\dot{B}_{q,1}^{-1+3/q})} 
&\lesssim \|(T_u a + R(u,a))^\ell\|_{\tilde{L}_t^{\infty}(\dot{B}_{q,1}^{-1+3/q})} + 
\|(T_a u)^\ell\|_{\tilde{L}_t^{\infty}(\dot{B}_{q,1}^{-1+3/q})} \\
&\lesssim \|a\|_{\tilde{L}_t^{\infty}(\dot{B}_{p,1}^{3/p})} 
\|u\|_{\tilde{L}_t^{\infty}(\dot{B}_{p,1}^{-1+3/p})} 
+ \|(T_a u)^\ell\|_{\tilde{L}_t^{\infty}(\dot{B}_{q,1}^{-2+3/q})} \\
&\lesssim \|a\|_{\tilde{L}_t^{\infty}(\dot{B}_{p,1}^{3/p})} 
\|u\|_{\tilde{L}_t^{\infty}(\dot{B}_{p,1}^{-1+3/p})}+ \|a\|_{\tilde{L}_t^{\infty}(\dot{B}_{p,1}^{3/p-1})} 
\|u\|_{\tilde{L}_t^{\infty}(\dot{B}_{p,1}^{-1+3/p})}\\
&\lesssim \mathcal{X}^2
\end{aligned}
}
and
\begin{equation}\label{au2}
\begin{aligned}
\|(au)^\ell\|_{\tilde{L}_t^2(\dot{B}_{q,1}^{-1+5/q})} 
&\lesssim \|(R(a,u) + T_u a)^\ell + (T_a u)^\ell
\|_{\tilde{L}_t^2(\dot{B}_{q,1}^{-1+3/q})} \\
&\lesssim \|a\|_{\tilde{L}_t^2(\dot{B}_{p,1}^{3/p})} 
\|u\|_{\tilde{L}_t^{\infty}(\dot{B}_{p,1}^{-1+3/p})}+ \|u\|_{\tilde{L}_t^2(\dot{B}_{p,1}^{3/p})} 
\|a\|_{\tilde{L}_t^{\infty}(\dot{B}_{p,1}^{-1+3/p})}\\
&\lesssim \mathcal{X}^2.
\end{aligned}
\end{equation}
For \(q \leq p \leq 2q\)  and  \(\frac{3}{q} - \frac{3}{p} \leq 1\),  applying Lemma~\ref{lem key}, we have
\begin{equation}\label{sj1}
\begin{aligned}
\|(T_{a}u^{\ell})^\ell + (T_{u}a^{\ell})^\ell\|_{\tilde{L}_t^1(\dot{B}_{q,1}^{5/q})}\lesssim &\|a\|_{\tilde{L}_t^\infty(L^{\infty})} 
\|u^{\ell}\|_{\tilde{L}_t^{1}(\dot{B}_{q,1}^{5/q})} + 
\|u\|_{\tilde{L}_t^2(L^{\infty})} 
\|a^{\ell}\|_{\tilde{L}_t^{2}(\dot{B}_{q,1}^{5/q})} \\
\lesssim& \|a\|_{\tilde{L}_t^\infty(\dot{B}_{p,1}^{3/p})} 
\|u^{\ell}\|_{\tilde{L}_t^{1}(\dot{B}_{q,1}^{5/q})} + 
\|u\|_{\tilde{L}_t^2(\dot{B}_{p,1}^{3/p})} 
\|a^{\ell}\|_{\tilde{L}_t^{2}(\dot{B}_{q,1}^{-1+5/q})}\\
\lesssim& \mathcal{X}^2,
\end{aligned}
\end{equation}
\begin{equation}\label{sj2}
\begin{aligned}
\|(R(a,u))^\ell\|_{\tilde{L}_t^1(\dot{B}_{q,1}^{5/q})} 
\lesssim& \|(R(a,u))^\ell
\|_{\tilde{L}_t^1(\dot{B}_{q,1}^{3/q})}\\
\lesssim& \|a\|_{\tilde{L}_t^2(\dot{B}_{p,1}^{3/p})} 
\|u\|_{\tilde{L}_t^{2}(\dot{B}_{p,1}^{3/p})}\lesssim \mathcal{X}^2,
\end{aligned}
\end{equation}
\begin{equation}\label{sj3}
\begin{aligned}
\|(T_{a} u^h)^\ell\|_{\tilde{L}_t^1(\dot{B}_{q,1}^{5/q})} 
\lesssim& \|T_{a} u^h\|_{\tilde{L}_t^1(\dot{B}_{q,1}^{-1+3/q})} \\
\lesssim& \|a\|_{\tilde{L}_t^{\infty}(\dot{B}_{p,1}^{-1+3/p})}
\|u^h\|_{\tilde{L}_t^1(\dot{B}_{p,1}^{1+3/p})}\lesssim \mathcal{X}^2.
\end{aligned}
\end{equation}
Use the decomposition 
\[
(T_{u} a^h)^{\ell} = [S_{k_0}, T_{u}] a^h + T_{u} S_{k_0}a^h.
\]
We focus on the commutator, for \(2 \leq q \leq p \leq 2q\).  Lemma~\ref{commo} gives
\begin{equation} \label{sj4}
\begin{aligned}
&\|[S_{k_0}, T_{u}] a^h
\|_{\tilde{L}_{t}^{1}(\dot{B}_{q,1}^{\frac{5}{q}})}\lesssim \|\nabla u
\|_{\tilde{L}_{t}^{1}(\dot{B}_{p,1}^{\frac{3}{p}})}
\|a^h\|_{\tilde{L}_{t}^{1}(\dot{B}_{p,1}^{\frac{3}{p}+\frac{2}{q}-1})} \\
&\lesssim \| u
\|_{\tilde{L}_{t}^{1}(\dot{B}_{p,1}^{1+\frac{3}{p}})}
\|a^h\|_{\tilde{L}_{t}^{1}(\dot{B}_{p,1}^{\frac{3}{p}})}
\lesssim \mathcal{X}^2.
\end{aligned}
\end{equation}
Similar to estimate \((T_{u}a^{\ell})^\ell\), we have
\begin{equation} \label{sj5}
\begin{aligned}
\|T_{u} S_{k_0}a^h\|_{\tilde{L}_{t}^{1}(\dot{B}_{q,1}^{5/q})} \lesssim \mathcal{X}^2.
\end{aligned}
\end{equation}
Gathering \eqref{sj1}, \eqref{sj2}, \eqref{sj3}, \eqref{sj4} and \eqref{sj5}, we obtain
\begin{equation} \label{au3}
\|(au)^\ell\|_{\tilde{L}_t^1(\dot{B}_{q,1}^{5/q})} \lesssim \mathcal{X}^2.
\end{equation}
Combining \eqref{au1}, \eqref{au2}, and \eqref{au3}, we have
\begin{equation}\label{mh}
\begin{aligned}
\mathcal{Y}^\ell 
\lesssim& \|u^\ell\|_{\tilde{L}_{t}^{\infty}(\dot{B}_{q,1}^{-1 + 3/q}) \cap \tilde{L}_{t}^{2}(\dot{B}_{q,1}^{-1 + 5/q}) \cap \tilde{L}_{t}^{1}(\dot{B}_{q,1}^{5/q})}
\\
& + \|(au)^\ell\|_{\tilde{L}_{t}^{\infty}(\dot{B}_{q,1}^{-1 + 3/q}) \cap \tilde{L}_{t}^{2}(\dot{B}_{q,1}^{-1 + 5/q}) \cap \tilde{L}_{t}^{1}(\dot{B}_{q,1}^{5/q})}\\
\lesssim& \mathcal{X} + \mathcal{X}^{2}.
\end{aligned}
\end{equation}
Finally, \eqref{mml} and  \eqref{mh} imply
\begin{align}\label{qu}
\mathcal{Y} \lesssim \mathcal{X}(1 + \mathcal{X}) \leq C\mathcal{X}.
\end{align}

\textbf{Step 2: To estimate $\|u^{\ell}\|_{\tilde{L}_{t}^{\infty}(\dot{B}_{q,1}^{-1
+\frac{3}{q}})\cap\tilde{L}_{t}^{2}(\dot{B}_{q,1}^{-1+\frac{5}{q}})
\cap\tilde{L}_{t}^{1}(\dot{B}_{q,1}^{\frac{5}{q}})}
\leq  \mathcal{Y}^{\ell}+C\mathcal{X}^{2}$.}

$\bullet$ \textbf{Low-frequency estimate for $\|(I(a)m)^\ell\|_{\tilde{L}_t^{\infty}(\dot{B}_{q,1}^{-1+3/q})}$.}  

It suffices to consider quadratic interactions. For $\frac{3}{q} - \frac{3}{p}\leq 1,  q\leq p \leq 2q $ and $2\leq p<6 $,  By Lemma \ref{lem key} and
Lemma \ref{linestimate}, we have
\begin{equation}\label{mom1}
\begin{aligned}
&\|(T_m I(a) + R(m, I(a)))^\ell\|_{\tilde{L}_t^{\infty}(\dot{B}_{q,1}^{-1+3/q})} \\
&\lesssim\|m\|_{\tilde{L}_t^{\infty}(\dot{B}_{p,1}^{3/p-1})} 
\|I(a)\|_{\tilde{L}_t^{\infty}(\dot{B}_{p,1}^{3/p})} \\
&\lesssim \|m\|_{\tilde{L}_t^{\infty}(\dot{B}_{p,1}^{3/p-1})}(1+
\|a\|_{\tilde{L}_t^{\infty}(\dot{B}_{p,1}^{3/p})})
\|a\|_{\tilde{L}_t^{\infty}(\dot{B}_{p,1}^{3/p})}\lesssim \mathcal{X} \mathcal{Y}(1+ \mathcal{X}).
\end{aligned}
\end{equation}
For $\frac{3}{q} - \frac{3}{p}  \leq 1,\,  q\leq p \leq 2q$  and  $2\leq p<6 $, using Lemma \ref{linestimate} and Lemma \ref{lem key}, we obtain 
\EQ{\label{mom2}
&\|(T_{I(a)}m)^\ell\|_{\tilde{L}_t^{\infty}(\dot{B}_{q,1}^{-1+3/q})} 
\lesssim \|(T_{I(a)}m)^\ell\|_{\tilde{L}_t^{\infty}(\dot{B}_{q,1}^{-2+3/q})} \\
&\lesssim \|I(a)\|_{\tilde{L}_t^{\infty}(\dot{B}_{p,1}^{3/p-1})} \|m\|_{\tilde{L}_t^{\infty}(\dot{B}_{p,1}^{3/p-1})} \\
&\lesssim(1+
\|a\|_{\tilde{L}_t^{\infty}(\dot{B}_{p,1}^{3/p})})
\|a\|_{\tilde{L}_t^{\infty}(\dot{B}_{p,1}^{3/p-1})}
 \|m\|_{\tilde{L}_t^{\infty}(\dot{B}_{p,1}^{3/p-1})} \lesssim 
 \mathcal{X} \mathcal{Y}(1+ \mathcal{X}).
}
Combining \eqref{mom1} and \eqref{mom2} yields
\begin{align*}\label{conclusion1}
\|(I(a)m)^\ell\|_{\tilde{L}_t^{\infty}(\dot{B}_{q,1}^{-1+3/q})}
 \leq C \mathcal{X} \mathcal{Y}.
\end{align*}

$\bullet$ \textbf{Low-frequency estimate for $\|(I(a)m)^\ell\|_{\tilde{L}_t^2(\dot{B}_{q,1}^{-1+5/q}) \cap \tilde{L}_t^1(\dot{B}_{q,1}^{5/q})}$.}  

For $\frac{3}{q} - \frac{3}{p}  \leq 1,\,  q\leq p \leq 2q,\, \frac{3}{q} - \frac{3}{p}  \leq 1$, by Lemma \ref{linestimate} and Lemma \ref{lem key}, we obtain
\begin{equation}\label{s0}
\begin{aligned}
&\|(T_m I(a))^\ell\|_{\tilde{L}_t^2(\dot{B}_{q,1}^{-1+5/q})} 
\lesssim \|(T_m I(a))^\ell\|_{\tilde{L}_t^2(\dot{B}_{q,1}^{-1+3/q})}\\
&\leq \|m\|_{\tilde{L}_t^{\infty}(\dot{B}_{p,1}^{-1+3/p})} 
\|I(a)\|_{\tilde{L}_t^2(\dot{B}_{p,1}^{3/p})}  \\
&\leq \|m\|_{\tilde{L}_t^{\infty}(\dot{B}_{p,1}^{-1+3/p})} 
\|a\|_{\tilde{L}_t^2(\dot{B}_{p,1}^{3/p})} 
(1+
\|a\|_{\tilde{L}_t^{\infty}(\dot{B}_{p,1}^{3/p})})
\lesssim \mathcal{X} \mathcal{Y}(1+ \mathcal{X})\leq C\mathcal{X} \mathcal{Y}.
\end{aligned}
\end{equation}
By Taylor's formula, 
\[
I(a) =I'(0)a + \tilde{h}(a)a,\,  \tilde{h}(0) = 0
\]
and given $2\leq p<6,\,\, 2\leq q\leq p \leq 2q, \,\,\frac{3}{q} - \frac{3}{p} \leq 1$,
by Lemma \ref{lem key} and Lemma \ref{linestimate}, we have
\begin{equation}\label{ha}
\begin{aligned}
&\|(\tilde{h}(a)a)^{\ell}\|_{\tilde{L}_{t}^{2}(\dot{B}_{q,1}^{\frac{5}{q}-1})}
\lesssim\|(T_{a}\tilde{h}(a))^{\ell}+(T_{g(a)}a)^{\ell}+(R(a,\tilde{h}(a)))^{\ell}\|_{\tilde{L}_{t}^{2}(\dot{B}_{q,1}^{\frac{3}{q}-1})}
\\
&\lesssim\|a\|_{\tilde{L}_{t}^{\infty}(\dot{B}_{p,1}^{-1+\frac{3}{p}})}
\|\tilde{h}(a)\|_{\tilde{L}_{t}^{2}(\dot{B}_{p,1}^{\frac{3}{p}})}
+\|\tilde{h}(a)\|_{\tilde{L}_{t}^{\infty}(\dot{B}_{p,1}^{-1+\frac{3}{p}})}
\|a\|_{\tilde{L}_{t}^{2}(\dot{B}_{p,1}^{\frac{3}{p}})}
\\
&
\lesssim(1+\|a\|_{\tilde{L}_{t}^{\infty}(\dot{B}_{p,1}^{\frac{3}{p}})})\|a\|_{\tilde{L}_{t}^{\infty}(\dot{B}_{p,1}^{\frac{3}{p}-1})}
\|a\|_{\tilde{L}_{t}^{2}(\dot{B}_{p,1}^{\frac{3}{p}})}
\\
&\lesssim (1+\mathcal{X})\mathcal{X}^{2}
\end{aligned}
\end{equation}
from which we get
\begin{equation}\label{ia} 
\begin{aligned}
&\|(I(a))^\ell\|_{\tilde{L}_{t}^{2}(\dot{B}_{q,1}^{\frac{5}{q}-1})}
\lesssim\|(I'(0)a)^{\ell}\|_{\tilde{L}_{t}^{2}(\dot{B}_{q,1}^{\frac{5}{q}-1})}+
\|(\tilde{h}(a)a)^{\ell}\|_{\tilde{L}_{t}^{2}(\dot{B}_{q,1}^{\frac{5}{q}-1})}
\\
 &
 \lesssim \mathcal{X}(1+ \mathcal{X}+\mathcal{X}^{2})\leq C\mathcal{X}.
\end{aligned}
\end{equation}
Combining \eqref{m1} and \eqref{ia} yields
\begin{equation}\label{ma}
\begin{aligned}
&\|(T_m (I(a))^{\ell})^\ell\|_{\tilde{L}_t^1(\dot{B}_{q,1}^{5/q})} 
\lesssim\|m\|_{\tilde{L}_t^{2}(L^{\infty})} \|(I(a))^{\ell}
\|_{\tilde{L}_t^2(\dot{B}_{q,1}^{\frac{5}{q}})}\\
 &\lesssim\|m\|_{\tilde{L}_t^{2}(\dot{B}_{p,1}^{\frac{3}{p}})} \|(I(a))^{\ell}
\|_{\tilde{L}_t^2(\dot{B}_{q,1}^{-1+\frac{5}{q}})}\leq C\mathcal{X}^{2}.
\end{aligned}
\end{equation}
Use the decomposition $$(T_{m} (I(a))^h)^{\ell}=
 [S_{k_0}, T_{m}] (I(a))^h + T_{m} S_{k_0} (I(a))^h.$$ 
We now focus on the commutator term. For $2 \leq q \leq p \leq 2q$ , the term  \eqref{m1} combined with Lemma~\ref{commo} yields
\begin{equation} \label{ma1}
\begin{aligned}
&\|([S_{k_0}, T_{m}] (I(a))^h)^{\ell}
\|_{\tilde{L}_{t}^{1}(\dot{B}_{q,1}^{\frac{5}{q}})}\lesssim
\|([S_{k_0}, T_{m}] (I(a))^h)^{\ell}
\|_{\tilde{L}_{t}^{1}(\dot{B}_{q,1}^{\frac{3}{q}})}
\\
&\lesssim\|\nabla m\|_{\tilde{L}_{t}^{2}(\dot{B}_{p,1}^{\frac{3}{p}-1})}
\|(I(a))^h\|_{\tilde{L}_{t}^{2}(\dot{B}_{p,1}^{\frac{3}{p}})}\\
&\lesssim\| m\|_{\tilde{L}_{t}^{2}(\dot{B}_{p,1}^{\frac{3}{p}})}
\|a\|_{\tilde{L}_{t}^{2}(\dot{B}_{p,1}^{\frac{3}{p}})}(1+
\|a\|_{\tilde{L}_t^{\infty}(\dot{B}_{p,1}^{3/p})})\leq C\mathcal{X}^{2}.
\end{aligned}
\end{equation}
Similar to \eqref{ma}, we also have
\begin{equation} \label{ma2}
\begin{aligned}
\|T_{m} S_{k_0} (I(a))^h)
\|_{\tilde{L}_{t}^{1}(\dot{B}_{q,1}^{\frac{5}{q}})}\leq C\mathcal{X}^{2}.
\end{aligned}
\end{equation}
Gathering the results from  \eqref{ma}, \eqref{ma1}, and \eqref{ma2}, we derive
\begin{align}\label{s1}
\|(T_m I(a))^\ell\|_{\tilde{L}_t^1(\dot{B}_{q,1}^{5/q})} 
\leq C\mathcal{X}^{2}.
\end{align}
Since $\dot{B}_{p,1}^{3/p} \hookrightarrow \dot{B}_{\infty,1}^0$, applying Remark~\ref{remarkbon} to the paraproduct terms yields
\begin{equation}\label{T1}
\begin{aligned}
\|(T_{I(a)} m^\ell)^\ell\|_{\tilde{L}_t^2(\dot{B}_{q,1}^{-1+5/q})} 
&\lesssim\|I(a)\|_{\tilde{L}_t^{\infty}(\dot{B}_{\infty,1}^0)} 
\|m^\ell\|_{\tilde{L}_t^2(\dot{B}_{q,1}^{-1+5/q})} \leq C\mathcal{X}\mathcal{Y}, \\
\|(T_{I(a)} m^\ell)^\ell\|_{\tilde{L}_t^1(\dot{B}_{q,1}^{5/q})} 
&\lesssim\|I(a)\|_{\tilde{L}_t^{\infty}(\dot{B}_{\infty,1}^0)}
 \|m^\ell\|_{\tilde{L}_t^1(\dot{B}_{q,1}^{5/q})}\leq C\mathcal{X}\mathcal{Y},
\end{aligned}
\end{equation}
and by \eqref{m1}, Lemma \ref{lem key} and 
Lemma \ref{linestimate}, for $2\leq q\leq p \leq 2q$, $\frac{3}{q} - \frac{3}{p} \leq 1$, we have
\begin{equation}\label{T2}
\begin{aligned}
\|(T_{I(a)} m^h)^\ell\|_{\tilde{L}_t^2(\dot{B}_{q,1}^{-1+5/q})} 
&\lesssim \|(T_{I(a)} m^h)^\ell\|_{\tilde{L}_t^2(\dot{B}_{q,1}^{-1+3/q})}\\
&\leq \|I(a)\|_{\tilde{L}_t^{\infty}(\dot{B}_{p,1}^{-1+3/p})} 
\|m^h\|_{\tilde{L}_t^2(\dot{B}_{p,1}^{3/p})} \leq C\mathcal{X}^{2}, \\
\|(T_{I(a)} m^h)^\ell\|_{\tilde{L}_t^1(\dot{B}_{q,1}^{5/q})} 
&\lesssim \|(T_{I(a)} m^h)^\ell\|_{\tilde{L}_t^1(\dot{B}_{q,1}^{-1+3/q})} \\
&\leq \|I(a)\|_{\tilde{L}_t^{\infty}(\dot{B}_{p,1}^{-1+3/p})} 
\|m^h\|_{\tilde{L}_t^1(\dot{B}_{p,1}^{3/p})}\leq C\mathcal{X}\mathcal{Y}.
\end{aligned}
\end{equation}
Combining \eqref{s0}, \eqref{s1}, \eqref{T1}, and \eqref{T2} yields
\begin{equation}\label{T3}
\begin{aligned}
\|(T_{I(a)} m + T_m I(a))^\ell
\|_{\tilde{L}_t^2(\dot{B}_{q,1}^{-1+5/q})
 \cap \tilde{L}_t^1(\dot{B}_{q,1}^{5/q})}
 \leq C\mathcal{X}\mathcal{Y}+C\mathcal{X}^{2}.
\end{aligned}
\end{equation}
Regarding the remainder term for parameters satisfying $2 \leq p < 6$ and $ q\leq p \leq 2q$, applying \eqref{m1} along with Lemma \ref{lem key} yields
\begin{equation}\label{T4}
\begin{aligned}
&\|(R(I(a), m))^\ell\|_{\tilde{L}_t^2(\dot{B}_{q,1}^{-1+5/q})} 
\lesssim\|(R(I(a), m))^\ell\|_{\tilde{L}_t^2(\dot{B}_{q,1}^{-1+3/q})} \\
& \  \  \  \ \lesssim\|I(a)\|_{\tilde{L}_t^2(\dot{B}_{p,1}^{3/p})} 
\|m\|_{\tilde{L}_t^{\infty}(\dot{B}_{p,1}^{-1+3/p})} \leq C\mathcal{X}\mathcal{Y}, \\
&\|(R(I(a), m))^\ell\|_{\tilde{L}_t^1(\dot{B}_{q,1}^{5/q})} 
\lesssim\|(R(I(a), m))^\ell\|_{\tilde{L}_t^1(\dot{B}_{q,1}^{3/q})} \\
& \  \  \  \ \lesssim\|I(a)\|_{\tilde{L}_t^2(\dot{B}_{p,1}^{3/p})} 
\|m\|_{\tilde{L}_t^2(\dot{B}_{p,1}^{3/p})}
 \leq C\mathcal{X}^{2}.
\end{aligned}
\end{equation}
Using \eqref{qu} and combining \eqref{T3} with \eqref{T4}, we obtain
\begin{align}\label{T5}
\|(I(a)m)^\ell\|_{\tilde{L}_t^2(\dot{B}_{q,1}^{-1+5/q}) \cap 
\tilde{L}_t^1(\dot{B}_{q,1}^{5/q})} \leq C\mathcal{X}^{2},
\end{align}
from which and the relation $u = m - I(a)m$, we finally derive
\begin{align*}
&\|u^\ell\|_{\tilde{L}_t^{\infty}(\dot{B}_{q,1}^{-1+3/q}) 
\cap \tilde{L}_t^2(\dot{B}_{q,1}^{-1+5/q}) \cap \tilde{L}_t^1(\dot{B}_{q,1}^{5/q})}\leq \|m^\ell\|_{\tilde{L}_t^{\infty}(\dot{B}_{q,1}^{-1+3/q}) 
\cap \tilde{L}_t^2(\dot{B}_{q,1}^{-1+5/q}) \cap \tilde{L}_t^1(\dot{B}_{q,1}^{5/q})} \nonumber \\
&\quad + \|(I(a)m)^\ell\|_{\tilde{L}_t^{\infty}(\dot{B}_{q,1}^{-1+3/q}) 
\cap \tilde{L}_t^1(\dot{B}_{q,1}^{5/q})} 
\leq\mathcal{Y}^{\ell}+C\mathcal{X}^{2}.
\end{align*}
This concludes the proof.
\end{proof}

\subsubsection{\bf Low frequency estimate for linear part}\label{linear part}
In this subsection, we present the linear estimates for low frequencies.
In light of Proposition \ref{momentum}, now it is enough to control $\mathcal{Y}^\ell$. 
For the coupled system, applying Lemma \ref{low} to the momentum equations \eqref{comnscauchy}
 yield the following estimates for $q \geq 2$:
\begin{equation}\label{amlow}
\begin{aligned}
&\|(a, m)^{\ell}\|_{\tilde{L}_{t}^{\infty}(\dot{B}_{q,1}^{-1 + \frac{3}{q}})} 
 + \|(a, m)^{\ell}\|_{\tilde{L}_{t}^{1}(\dot{B}_{q,1}^{5/q})} 
 + \|(a, m)^{\ell}\|_{\tilde{L}_{t}^{2}(\dot{B}_{q,1}^{-1+ \frac{5}{q}})} \\
&\lesssim \|(a_{0}, m_{0})^{\ell}\|_{\dot{B}_{q,1}^{-2 + \frac{5}{q}}} 
 + \|(a_{0},  m_{0})^{\ell}\|_{\dot{B}_{q,1}^{-3 + \frac{7}{q}}}  \\
&\quad + \| (h(a, m))^{\ell}\|_{\tilde{L}_{t}^{1}(\dot{B}_{q,1}^{\frac{5}{q} - 2})} 
 + \| (h(a, m))^{\ell}\|_{\tilde{L}_{t}^{1}(\dot{B}_{q,1}^{\frac{7}{q} - 3})} 
  \\
&\lesssim \|(a_{0},  m_{0})^{\ell}\|_{\dot{B}_{q,1}^{-3 + \frac{7}{q}}} 
 + \|(h(a, m))^{\ell}\|_{\tilde{L}_{t}^{1}(\dot{B}_{q,1}^{\frac{7}{q}- 3})}.
 \end{aligned}
\end{equation}
Using Proposition~\ref{momentum}, we have
\begin{equation*}
\|u^{\ell}\|_{\tilde{L}_{t}^{\infty}(\dot{B}_{q,1}^{-1
+\frac{3}{q}})\cap\tilde{L}_{t}^{2}(\dot{B}_{q,1}^{-1+\frac{5}{q}})
\cap\tilde{L}_{t}^{1}(\dot{B}_{q,1}^{\frac{5}{q}})}
\lesssim\mathcal{Y}^{\ell}+C\mathcal{X}^{2},
\end{equation*}
which directly implies
\begin{equation}\label{aulow}
\begin{aligned}
\|(a,u)^{\ell}\|_{\tilde{L}_{t}^{\infty}(\dot{B}_{q,1}^{-1 + \frac{3}{q}}) 
    \cap \tilde{L}_{t}^{2}(\dot{B}_{q,1}^{-1 + \frac{5}{q}}) 
    \cap \tilde{L}_{t}^{1}(\dot{B}_{q,1}^{\frac{5}{q}})}
&\lesssim \|(a,m)^{\ell}\|_{\tilde{L}_{t}^{\infty}(\dot{B}_{q,1}^{-1 + \frac{3}{q}}) 
    \cap \tilde{L}_{t}^{2}(\dot{B}_{q,1}^{-1 + \frac{5}{q}}) 
    \cap \tilde{L}_{t}^{1}(\dot{B}_{q,1}^{\frac{5}{q}})}+\mathcal{X}^{2}.
\end{aligned}
\end{equation}
Combining 
\eqref{amlow}, \eqref{aulow} and Proposition~\ref{momentum1}, then we have the following low estimates
\begin{equation*}
\begin{aligned}
&\|(a,u)^{\ell}\|_{\tilde{L}_{t}^{\infty}(\dot{B}_{q,1}^{-1 + \frac{3}{q}}) 
    \cap \tilde{L}_{t}^{2}(\dot{B}_{q,1}^{-1 + \frac{5}{q}}) 
    \cap \tilde{L}_{t}^{1}(\dot{B}_{q,1}^{\frac{5}{q}})} \\
&\lesssim 
\|(a_{0}, m_{0})^{\ell}\|_{\dot{B}_{q,1}^{-3 + \frac{7}{q}}}+ 
\|(h(a, m))^{\ell}\|_{\tilde{L}_{t}^{1}(\dot{B}_{q,1}^{-3 + \frac{7}{q}})} 
    + \mathcal{X}^{2}.
\end{aligned}
\end{equation*}

\subsubsection{\bf Low frequency estimate for nonlinear terms}\label{nonlinear part}

\begin{prop}\label{momentum1}
Assume $2 \leq p< 6,\,\,2 \leq q \leq p \leq 2q$, and that $(q,\,\, p)$ satisfies
\begin{align*}
\frac{3}{q}-\frac{3}{p}\leq 1<\frac{2}{q}+\frac{3}{p},
\end{align*} 
Suppose that \((a, u)\) is a solution of system \eqref{CNS} with \(\mathcal{X} \lesssim 1\) and \(\|a\|_{\tilde{L}_{t}^{\infty}(\dot{B}_{p,1}^{3/p})} \lesssim 1\). Then we have
\begin{equation*}
\label{g}
\|(h(a, m))^{\ell}\|_{\tilde{L}_{t}^{1}(\dot{B}_{q,1}^{\frac{7}{q} - 3})} \lesssim \mathcal{X}^{2}.
\end{equation*}
\end{prop}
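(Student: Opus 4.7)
My plan exploits the null structure present in each $h_i$: $h_1$ is a divergence, $h_3$ a gradient, and $h_2$ carries the second-order Lam\'e operator $\mathcal{A}$. Since the target norm is evaluated only at low frequencies, Bernstein's inequality lets us shift these derivatives onto the projection and reduce the estimates to
\begin{align*}
\|h_1^\ell\|_{\dot{B}_{q,1}^{7/q-3}} &\lesssim \|\bigl((I(a)-1)m\otimes m\bigr)^\ell\|_{\dot{B}_{q,1}^{7/q-2}},\\
\|h_2^\ell\|_{\dot{B}_{q,1}^{7/q-3}} &\lesssim \|(I(a)m)^\ell\|_{\dot{B}_{q,1}^{7/q-1}},\\
\|h_3^\ell\|_{\dot{B}_{q,1}^{7/q-3}} &\lesssim \|(G(a)a)^\ell\|_{\dot{B}_{q,1}^{7/q-2}}.
\end{align*}

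For each reduced product I will apply Bony's decomposition $fg = T_f g + T_g f + R(f,g)$ together with a low/high frequency splitting of each factor, and bound the paraproducts and remainder using the mixed-index Besov product estimate (Lemma~\ref{lem key}) and the commutator estimate (Lemma~\ref{commo}), following the same template as Step~2 of Proposition~\ref{momentum}. The smooth coefficients $I(a)$ and $G(a)$ both vanish at $a=0$, so Lemma~\ref{linestimate} lets us absorb their Besov norms into those of $a$, with all composition constants uniformly bounded by the a priori assumption $\|a\|_{\tilde{L}_{t}^{\infty}(\dot{B}_{p,1}^{3/p})}\lesssim 1$.

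The $h_2$ term is the easiest: since $7/q-1\leq 5/q$ for $q\geq 2$, low-frequency embedding essentially reduces it to the bound on $\|(I(a)m)^\ell\|_{\tilde{L}_{t}^{1}(\dot{B}_{q,1}^{5/q})}$ that was already proved as estimate \eqref{T5} in the previous proposition. The term $h_3 = \nabla(G(a)a)$ is quadratic in $a$ (because $G(0)=0$), and a direct Bony expansion together with Lemmas~\ref{lem key} and~\ref{linestimate} yields an $\mathcal{X}^{2}$ bound without any high-high-to-low difficulty.

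The main obstacle will be $h_1$, which is cubic in $(a,m)$. I will split
$(I(a)-1)m\otimes m = -m\otimes m + I(a)m\otimes m$,
using $I(0)=0$. The genuinely quadratic leading piece $-m\otimes m$ is the most delicate: placing both $m$ factors in $\tilde{L}_{t}^{2}$-based Besov spaces to produce the $\tilde{L}_{t}^{1}$ output and using $\mathcal{Y}^\ell$, $\mathcal{Y}^h$ norms for the low- and high-frequency parts respectively, the high-high-to-low interaction closes precisely because the hypothesis $\frac{2}{q}+\frac{3}{p}>1$ provides the critical regularity gain, while $\frac{3}{q}-\frac{3}{p}\leq 1$ controls the Sobolev loss between the $L^{p}$ high-frequency and $L^{q}$ low-frequency frameworks. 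The genuinely cubic remainder $I(a)m\otimes m$ carries an extra factor of $a$ through Lemma~\ref{linestimate}, so it is bounded by $\mathcal{X}^{3}\leq \mathcal{X}^{2}$ under $\mathcal{X}\lesssim 1$. A final application of Proposition~\ref{momentum} converts the resulting $\mathcal{Y}$-norm bounds back to $\mathcal{X}$-norm bounds, completing the proof.
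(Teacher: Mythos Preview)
Your overall strategy---exploiting the divergence/gradient/Lam\'e structure of each $h_i$ to shift derivatives onto the low-frequency projection, then running Bony decompositions with Lemma~\ref{lem key} and Lemma~\ref{commo}---is exactly the paper's route, and your treatment of $h_1$ and $h_3$ matches Steps~1--3 of the paper's proof.

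However, your shortcut for $h_2$ contains a genuine error: the low-frequency embedding goes the other way. At low frequencies one has $\|f^\ell\|_{\dot B^{s_1}_{q,1}}\lesssim\|f^\ell\|_{\dot B^{s_2}_{q,1}}$ only when $s_1\geq s_2$, because $\sup_{j\leq k_0}2^{j(s_1-s_2)}<\infty$ requires $s_1-s_2\geq 0$. You correctly observe that $7/q-1\leq 5/q$ for $q\geq 2$, but this means the $\dot B^{7/q-1}_{q,1}$ norm puts \emph{more} weight on very low frequencies than the $\dot B^{5/q}_{q,1}$ norm and is therefore \emph{not} controlled by it (except at $q=2$ where the indices coincide). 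So estimate \eqref{T5} does not close the $h_2$ bound for $q>2$.

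The paper handles $h_2$ in its Step~4 by a fresh Bony decomposition of $(I(a)m)^\ell$ targeting $\dot B^{7/q-1}_{q,1}$ directly, repeatedly using the low-frequency embedding in the \emph{correct} direction (e.g.\ reducing the paraproduct $(T_{I(a)}m^\ell)^\ell$ from regularity $7/q-1$ down to $7/q-2$) before invoking Lemma~\ref{lem key}, Lemma~\ref{commo}, and Lemma~\ref{linestimate}. This requires, in particular, the remainder estimates under $\tfrac{2}{q}+\tfrac{3}{p}>1$ (the same high--high--to--low mechanism you identify for $m\otimes m$) and an auxiliary bound on $\|(I(a))^\ell\|_{\tilde L^2_t(\dot B^{5/q-1}_{q,1})}$ analogous to \eqref{ia}. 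You should replace your shortcut with this direct argument; the template is the same as what you propose for $h_1$ and $h_3$, so no new ideas are needed, but the work has to be done.
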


\begin{proof}

\textbf{Step 1: To estimate $\|(\mathrm{div}(m\otimes m))^{\ell}\|_{\tilde{L}_{t}^{1}(\dot{B}_{q,1}^{\frac{7}{q}-3})}$.}
we analyze the terms of quadratic convection. First observe that
\begin{equation}\label{mm1}
\|(\mathrm{div}(m\otimes m))^{\ell}\|_{\tilde{L}_{t}^{1}(\dot{B}_{q,1}^{\frac{7}{q}-3})}
\lesssim\|(m\otimes m)^{\ell}\|_{\tilde{L}_{t}^{1}(\dot{B}_{q,1}^{\frac{7}{q}-2})}.
\end{equation}
Using Bony's decomposition, for $q\geq 2$, 
Remark~\ref{remarkbon} and the term \eqref{m1} yield
\begin{equation}\label{mm11} 
\begin{aligned}
\|(T_{m} m^\ell)^{\ell}\|_{\tilde{L}_{t}^{1}(\dot{B}_{q,1}^{\frac{7}{q}-2})}
&\lesssim\|m\|_{\tilde{L}_{t}^{2}(\dot{B}_{\infty,1}^{\frac{2}{q}-1})}
\|m^\ell\|_{\tilde{L}_{t}^{2}(\dot{B}_{q,1}^{\frac{5}{q}-1})} \\
&\lesssim\|m^\ell\|_{\tilde{L}_{t}^{2}(\dot{B}_{q,1}^{\frac{5}{q}-1})}^2 
+ \|m^h\|_{\tilde{L}_{t}^{2}(\dot{B}_{\infty,1}^{0})}
\|m^\ell\|_{\tilde{L}_{t}^{2}(\dot{B}_{q,1}^{\frac{5}{q}-1})} \\
&\lesssim\|m^\ell\|_{\tilde{L}_{t}^{2}(\dot{B}_{q,1}^{\frac{5}{q}-1})}^2 
+ \|m^h\|_{\tilde{L}_{t}^{2}(\dot{B}_{p,1}^{\frac{3}{p}})}
\|m^\ell\|_{\tilde{L}_{t}^{2}(\dot{B}_{q,1}^{\frac{5}{q}-1})}\lesssim 
\mathcal{Y}^{2}+\mathcal{X}\mathcal{Y}.
\end{aligned}
\end{equation}
Next, we decompose
\[
(T_{m} m^h)^\ell = [S_{k_{0}}, T_{m}]m^h + T_{m}S_{k_{0}}m^h.
\]
 Similar to (\ref{m1}), we have
\begin{equation}\label{mm2} 
\begin{aligned}
\|T_{m}S_{k_{0}}m^h\|_{\tilde{L}_{t}^{1}(\dot{B}_{q,1}^{\frac{7}{q}-2})}
\lesssim 
\mathcal{Y}^{2}+\mathcal{X}\mathcal{Y}.
\end{aligned}
\end{equation}
For $2\leq q\leq p\leq 2q$,  Lemma~\ref{commo} and (\ref{m1}) imply 
\begin{equation}\label{mm3}
\begin{aligned}
\|[S_{k_{0}}, T_{m}]m^h\|_{\tilde{L}_{t}^{1}(\dot{B}_{q,1}^{\frac{7}{q}-2})}
\lesssim\|\nabla m\|_{\tilde{L}_{t}^{2}(\dot{B}_{p,1}^{\frac{3}{p}-1})}
\|m^h\|_{\tilde{L}_{t}^{2}(\dot{B}_{p,1}^{\frac{3}{p} + \frac{4}{q} - 2})}\lesssim\|m\|^2_{\tilde{L}_{t}^{2}(\dot{B}_{p,1}^{\frac{3}{p}})}\lesssim 
\mathcal{X}^{2}.
\end{aligned}
\end{equation}
For the remainder term, when $\frac{2}{q} + \frac{3}{p} > 1$ and  $2\leq q\leq p\leq 2q$, Lemma \ref{lem key} gives
\begin{equation}\label{mm4} 
\begin{aligned}
\|(R(m^h, m^h))^{\ell}\|_{\tilde{L}_{t}^{1}(\dot{B}_{q,1}^{\frac{7}{q}-2})}\lesssim\|m^h\|_{\tilde{L}_{t}^{2}(\dot{B}_{p,1}^{\frac{3}{p}})}
\|m^h\|_{\tilde{L}_{t}^{2}(\dot{B}_{p,1}^{\frac{3}{p} + \frac{4}{q} - 2})}\lesssim\|m\|^2_{\tilde{L}_{t}^{2}(\dot{B}_{p,1}^{\frac{3}{p}})} \lesssim 
\mathcal{X}^{2}.
\end{aligned}
\end{equation}
Under the conditions $p \geq q$, the inequality $\frac{3}{p} + \frac{2}{q} > 1$ implies $\frac{10}{q} - 2 > 0$. Applying Lemma \ref{lem key} to this result, we obtain
\begin{equation}\label{mm5} 
\|(R(m^\ell, m^\ell))^{\ell}\|_{\tilde{L}_{t}^{1}(\dot{B}_{q,1}^{\frac{7}{q}-2})}
\lesssim\|m^\ell\|^2_{\tilde{L}_{t}^{2}(\dot{B}_{q,1}^{\frac{5}{q}-1})}
\lesssim 
\mathcal{Y}^{2}.
\end{equation}
Combining these estimates (\ref{mm1})-(\ref{mm5}) and Proposition ~\ref{momentum}, we have
\begin{equation*}
\begin{aligned}
&\|(\mathrm{div}(m\otimes m))^{\ell}\|_{\tilde{L}_{t}^{1}(\dot{B}_{q,1}^{\frac{7}{q}-3})} \\
&\lesssim\|(T_{m} m^\ell+T_{m} m^h+R(m^h, m^h)+R(m^\ell, m^\ell))^{\ell}
\|_{\tilde{L}_{t}^{1}(\dot{B}_{q,1}^{\frac{7}{q}-3})}
 \\
&\lesssim 
\mathcal{Y}^{2}+\mathcal{X}\mathcal{Y}+\mathcal{X}^{2}\lesssim \mathcal{X}^{2}.
\end{aligned}
\end{equation*}

\textbf{Step 2: To estimate $
\|(\mathrm{div}(I(a)m\otimes m))^{\ell}\|_{\tilde{L}_{t}^{1}(\dot{B}_{q,1}^{\frac{7}{q}-3})}$.}

\textbf{Case 1: To estimate $\|(T_{I(a)m} m^\ell)^{\ell}
\|_{\tilde{L}_{t}^{1}(\dot{B}_{q,1}^{\frac{7}{q}-2})}$.} 
From (\ref{m1}), Lemma \ref{prodlemma} and Lemma~\ref{linestimate}, it holds that
\begin{equation}\label{iam1}
\begin{aligned}
&\|(I(a)m)^{h}\|_{\tilde{L}_{t}^{2}(\dot{B}_{p,1}^{\frac{3}{p}})}\lesssim \|I(a)\|_{\tilde{L}_{t}^{\infty}(\dot{B}_{p,1}^{
\frac{3}{p}})}\|m\|_{\tilde{L}_{t}^{2}(\dot{B}_{p,1}^{
\frac{3}{p}})}\lesssim\mathcal{X}^{2}.
\end{aligned}
\end{equation}
For $q\geq2$, by  (\ref{m1}) and Lemma~\ref{linestimate} and Remark~\ref{remarkbon}, we have
\begin{equation}\label{iam2}
\begin{aligned}
&\|(T_{I(a)}m)^{\ell}\|_{\tilde{L}_{t}^{2}(\dot{B}_{\infty,1}^{-1
+\frac{2}{q}})}\lesssim \|(T_{I(a)}m)^{\ell}\|_{\tilde{L}_{t}^{2}(\dot{B}_{p,1}^{-1
+\frac{2}{q}+\frac{3}{p}})}
\\
&\lesssim \|I(a)\|_{\tilde{L}_{t}^{\infty}(L^{\infty})}(\|m^{\ell}\|_{\tilde{L}_{t}^{2}(\dot{B}_{p,1}^{-1
+\frac{2}{q}+\frac{3}{p}})}+\|m^{h}\|_{\tilde{L}_{t}^{2}(\dot{B}_{p,1}^{-1
+\frac{2}{q}+\frac{3}{p}})})\\
&\lesssim \|I(a)\|_{\tilde{L}_{t}^{\infty}(L^{\infty})}(\|m^{\ell}\|_{\tilde{L}_{t}^{2}(\dot{B}_{q,1}^{-1
+\frac{5}{q}})}+\|m^{h}\|_{\tilde{L}_{t}^{2}(\dot{B}_{p,1}^{-1
+\frac{2}{q}+\frac{3}{p}})})\\
&\lesssim \|a\|_{\tilde{L}_{t}^{\infty}(\dot{B}_{p,1}^{\frac{3}{p}})}(\|m^{\ell}\|_{\tilde{L}_{t}^{2}(\dot{B}_{q,1}^{-1
+\frac{5}{q}})}+
\|m^{h}\|_{\tilde{L}_{t}^{2}(\dot{B}_{p,1}^{\frac{3}{p}})})\lesssim \mathcal{X}^{2}
+\mathcal{X}\mathcal{Y}.
\end{aligned}
\end{equation}
By Lemma \ref{lem key} and Lemma~\ref{linestimate}, for $2\leq p<6,\, 2\leq q\leq p\leq 2q$, we have
\begin{equation}\label{iam3}
\begin{aligned}
&\|(R(I(a),m))^{\ell}\|_{\tilde{L}_{t}^{2}(\dot{B}_{\infty,1}^{-1
+\frac{2}{q}})}\\
&\lesssim \|(R(I(a),m))^{\ell}\|_{\tilde{L}_{t}^{2}(\dot{B}_{q,1}^{-1
+\frac{5}{q}})}\lesssim \|(R(I(a),m)^{\ell}\|_{\tilde{L}_{t}^{2}(\dot{B}_{q,1}^{-1
+\frac{3}{q}})}
\\
&\lesssim \|I(a)\|_{\tilde{L}_{t}^{2}(\dot{B}_{p,1}^{
\frac{3}{p}})}(\|m^{\ell}\|_{\tilde{L}_{t}^{\infty}(\dot{B}_{p,1}^{
-1+\frac{3}{p}})}+\|m^{h}\|_{\tilde{L}_{t}^{\infty}(\dot{B}_{p,1}^{
-1+\frac{3}{p}})})\\
&\lesssim \|I(a)\|_{\tilde{L}_{t}^{2}(\dot{B}_{p,1}^{
\frac{3}{p}})}(\|m^{\ell}\|_{\tilde{L}_{t}^{\infty}(\dot{B}_{q,1}^{
-1+\frac{3}{q}})}+\|m^{h}\|_{\tilde{L}_{t}^{\infty}(\dot{B}_{p,1}^{
-1+\frac{3}{p}})})\lesssim \mathcal{X}\mathcal{Y}.\\
\end{aligned}
\end{equation}
For  $ \frac{3}{q}-\frac{3}{p}\leq1,\,2\leq q\leq p\leq 2q$, by 
Lemma \ref{lem key} and Lemma~\ref{linestimate}, we have
\begin{equation}\label{iam4}
\begin{aligned}
&\|(T_{m}I(a))^{\ell}\|_{\tilde{L}_{t}^{2}(\dot{B}_{\infty,1}^{-1
+\frac{2}{q}})}\lesssim \|(T_{m}I(a))^{\ell}\|_{\tilde{L}_{t}^{2}(\dot{B}_{q,1}^{-1
+\frac{5}{q}})}\lesssim \|(T_{m}I(a))^{\ell}\|_{\tilde{L}_{t}^{2}(\dot{B}_{q,1}^{-1
+\frac{3}{q}})}
\\
&\lesssim \|m\|_{\tilde{L}_{t}^{\infty}(\dot{B}_{p,1}^{-1
+\frac{3}{p}})}\|I(a)\|_{\tilde{L}_{t}^{2}(\dot{B}_{p,1}^{\frac{3}{p}})}\lesssim 
\mathcal{X}\mathcal{Y}.
\end{aligned}
\end{equation}
By  (\ref{iam1})-(\ref{iam4}), we thus obtain for $q\geq2$
\begin{equation}\label{iam5}
\begin{aligned}
&\|I(a)m\|_{\tilde{L}_{t}^{2}(\dot{B}_{\infty,1}^{-1
+\frac{2}{q}})}\lesssim \|(R(I(a),m)+T_{m}I(a)+T_{I(a)}m)^{\ell}\|_{\tilde{L}_{t}^{2}(\dot{B}_{\infty,1}^{-1
+\frac{2}{q}})}\\
& \ \ +\|(I(a)m)^{h}\|_{\tilde{L}_{t}^{2}(\dot{B}_{p,1}^{-1
+\frac{2}{q}+\frac{3}{p}})}\lesssim \mathcal{X}^{2}
+\mathcal{X}\mathcal{Y}+\|(I(a)m)^{h}\|_{\tilde{L}_{t}^{2}(\dot{B}_{p,1}^{\frac{3}{p}})}
\\
&\lesssim\mathcal{X}^{2}+\mathcal{X}\mathcal{Y}.
\end{aligned}
\end{equation}
Remark~\ref{remarkbon}, Proposition ~\ref{momentum} and (\ref{iam5}) yield
 \begin{equation}\label{iam6}
\begin{aligned}
&\|(T_{I(a)m} m^\ell)^{\ell}\|_{\tilde{L}_{t}^{1}(\dot{B}_{q,1}^{\frac{7}{q}-2})}
\lesssim
\|I(a)m\|_{\tilde{L}_{t}^{2}(\dot{B}_{\infty,1}^{\frac{2}{q}-1})}
\|m^\ell\|_{\tilde{L}_{t}^{2}(\dot{B}_{q,1}^{\frac{5}{q}-1})}
\\
&\lesssim\mathcal{X}\mathcal{Y}(\mathcal{X}+\mathcal{Y})\lesssim \mathcal{X}^{3}.
\end{aligned}
\end{equation}

\textbf{Case 2: To estimate $\|(T_{I(a)m} m^h)^{\ell}
\|_{\tilde{L}_{t}^{1}(\dot{B}_{q,1}^{\frac{7}{q}-2})}$.}
Next, we decompose
\[
(T_{I(a)m} m^h)^\ell = [S_{k_{0}}, T_{I(a)m}]m^h + T_{I(a)m}S_{k_{0}}m^h.
\]
 Similar to (\ref{iam6}), we have
\begin{equation*}
\begin{aligned}
\|T_{I(a)m}S_{k_{0}}m^h\|_{\tilde{L}_{t}^{1}(\dot{B}_{q,1}^{\frac{7}{q}-2})}
\lesssim \mathcal{X}^{3}.
\end{aligned}
\end{equation*}
For $2\leq q\leq p\leq 2q$, with the help of Lemma~\ref{commo}, Lemma~\ref{linestimate} and (\ref{m1}),
\begin{equation*}
\begin{aligned}
\|[S_{k_{0}}, T_{I(a)m}]m^h\|_{\tilde{L}_{t}^{1}(\dot{B}_{q,1}^{\frac{7}{q}-2})}
&\lesssim\|\nabla (I(a)m)\|_{\tilde{L}_{t}^{2}(\dot{B}_{p,1}^{\frac{3}{p}-1})}
\|m^h\|_{\tilde{L}_{t}^{2}(\dot{B}_{p,1}^{\frac{3}{p} + \frac{4}{q} - 2})} \\
&\lesssim\|I(a)\|_{\tilde{L}_{t}^{\infty}(\dot{B}_{p,1}^{
\frac{3}{p}})}\|m\|^2_{\tilde{L}_{t}^{2}(\dot{B}_{p,1}^{\frac{3}{p}})} 
\lesssim \mathcal{X}^{3}.
\end{aligned}
\end{equation*}
From above, we thus obtain
\begin{eqnarray}\label{iam10}
\|(T_{I(a)m} m^h)^{\ell}\|_{\tilde{L}_{t}^{1}(\dot{B}_{q,1}^{\frac{7}{q}-2})}
\lesssim \mathcal{X}^{3}.
\end{eqnarray}

\textbf{Case 3: To estimate $\|(T_{m} (I(a)m)^\ell)^{\ell}
\|_{\tilde{L}_{t}^{1}(\dot{B}_{q,1}^{\frac{7}{q}-2})}$.} 
By (\ref{m1}), for $q\geq2$, we have
 \begin{equation}\label{iam7}
\begin{aligned}
&\|m\|_{\tilde{L}_{t}^{2}(\dot{B}_{\infty,1}^{-1
+\frac{2}{q}})}\lesssim \|m^{\ell}\|_{\tilde{L}_{t}^{2}(\dot{B}_{\infty,1}^{-1
+\frac{2}{q}})}+\|m^{h}\|_{\tilde{L}_{t}^{2}(\dot{B}_{p,1}^{\frac{3}{p}})}\\
&\lesssim \|m\|_{\tilde{L}_{t}^{2}(\dot{B}_{q,1}^{-1
+\frac{5}{q}})}^{\ell}+
\|m\|_{\tilde{L}_{t}^{2}(\dot{B}_{p,1}^{\frac{3}{p}})}^{h}\lesssim \mathcal{X}+\mathcal{Y}.
\end{aligned}
\end{equation}
For $q\geq2$, from (\ref{T5}),  (\ref{iam7}), Remark~\ref{remarkbon} and Proposition ~\ref{momentum}, it thus holds that
 \begin{equation}\label{iam8}
\begin{aligned}
&\|T_{m} (I(a)m)^\ell\|_{\tilde{L}_{t}^{1}(\dot{B}_{q,1}^{\frac{7}{q}-2})}^{\ell}
\lesssim\|m\|_{\tilde{L}_{t}^{2}(\dot{B}_{\infty,1}^{\frac{2}{q}-1})}
\|(I(a)m)^\ell\|_{\tilde{L}_{t}^{2}(\dot{B}_{q,1}^{\frac{5}{q}-1})}
\\
&\lesssim \mathcal{X}^{2}(\mathcal{X}+\mathcal{Y})\lesssim \mathcal{X}^{3}.
\end{aligned}
\end{equation}

\textbf{Case 4: To estimate $\|(T_{m} (I(a)m)^h)^{\ell}
\|_{\tilde{L}_{t}^{1}(\dot{B}_{q,1}^{\frac{7}{q}-2})}$.}
Next, we decompose
\[
(T_{m} (I(a)m)^h)^\ell = [S_{k_{0}}, T_{m}](I(a)m)^h + T_{m}S_{k_{0}}(I(a)m)^h.
\]
 Similar to (\ref{iam8}), we obtain
\begin{equation*}
\begin{aligned}
\|T_{m}S_{k_{0}}(I(a)m)^h\|_{\tilde{L}_{t}^{1}(\dot{B}_{q,1}^{\frac{7}{q}-2})}
\lesssim\mathcal{X}^{3}.
\end{aligned}
\end{equation*}
For $q\geq2$, by Lemma~\ref{commo}, Lemma~\ref{linestimate} and (\ref{m1}),
we have
\begin{equation*}
\begin{aligned}
\|[S_{k_{0}}, T_{m}](I(a)m)^h\|_{\tilde{L}_{t}^{1}(\dot{B}_{q,1}^{\frac{7}{q}-2})}
&\lesssim\|\nabla m\|_{\tilde{L}_{t}^{2}(\dot{B}_{p,1}^{\frac{3}{p}-1})}
\|(I(a)m)^h\|_{\tilde{L}_{t}^{2}(\dot{B}_{p,1}^{\frac{3}{p} + \frac{4}{q}-2})} \\
&\lesssim\|I(a)\|_{\tilde{L}_{t}^{\infty}(\dot{B}_{p,1}^{
\frac{3}{p}})}\|m\|^2_{\tilde{L}_{t}^{2}(\dot{B}_{p,1}^{\frac{3}{p}})} 
\lesssim\mathcal{X}^{3}.
\end{aligned}
\end{equation*}
From above, we have the following
\begin{eqnarray}\label{iam9}
\|(T_{m} (I(a)m)^h)^{\ell}\|_{\tilde{L}_{t}^{1}(\dot{B}_{q,1}^{\frac{7}{q}-2})}
\lesssim \mathcal{X}^{3}.
\end{eqnarray}

\textbf{Case 5: To estimate $
\|(R(m,I(a)m))^{\ell}\|_{\tilde{L}_{t}^{1}(\dot{B}_{q,1}^{\frac{7}{q}-2})}$.}
  When $\frac{2}{q}+\frac{3}{p}>1$ and $q\geq 2$, Lemma \ref{prodlemma}, Lemma~\ref{linestimate}, Lemma \ref{lem key} and (\ref{m1}) give 
\begin{equation}\label{ram1}
\begin{aligned}
\|(R(m^h, (I(a)m)^h))^{\ell}\|_{\tilde{L}_{t}^{1}(\dot{B}_{q,1}^{\frac{7}{q}-2})}
&\lesssim \|m^h\|_{\tilde{L}_{t}^{2}(\dot{B}_{p,1}^{\frac{3}{p}})}
\|(I(a)m)^h\|_{\tilde{L}_{t}^{2}(\dot{B}_{p,1}^{\frac{3}{p} + \frac{4}{q} - 2})} \\
&\lesssim\|a\|_{\tilde{L}_{t}^{\infty}(\dot{B}_{p,1}^{\frac{3}{p}})}
\|m\|^2_{\tilde{L}_{t}^{2}(\dot{B}_{p,1}^{\frac{3}{p}})} \lesssim\mathcal{X}^{3}.
\end{aligned}
\end{equation}
When $q<5$, by Lemma \ref{lem key} and (\ref{T5}),
\begin{equation}\label{ram2}
\begin{aligned}
\|(R(m^\ell, (I(a)m)^\ell))^{\ell}\|_{\tilde{L}_{t}^{1}(\dot{B}_{q,1}^{\frac{7}{q}-2})}
\lesssim\|m^\ell\|_{\tilde{L}_{t}^{2}(\dot{B}_{q,1}^{\frac{5}{q}-1})}
\| (I(a)m)^\ell\|_{\tilde{L}_{t}^{2}(\dot{B}_{q,1}^{\frac{5}{q}-1})}
\lesssim\mathcal{X}^{2}\mathcal{Y}.
\end{aligned}
\end{equation}
 By  (\ref{ram1}), (\ref{ram2}), and Proposition ~\ref{momentum}, we have
 \begin{equation}\label{ram3}
\begin{aligned}
\|(R(m,I(a)m))^{\ell}\|_{\tilde{L}_{t}^{1}(\dot{B}_{q,1}^{\frac{7}{q}-2})}\lesssim \mathcal{X}^{3}.
\end{aligned}
\end{equation}
Boosting (\ref{iam6}), (\ref{iam10}), (\ref{iam8}),  (\ref{iam9}) and (\ref{ram3}), we have
\begin{equation*}
\begin{aligned}
\|(\mathrm{div}((I(a)m)\otimes m))^{\ell}\|_{\tilde{L}_{t}^{1}(\dot{B}_{q,1}^{\frac{7}{q}-3})}
\lesssim \mathcal{X}^{3}.
\end{aligned}
\end{equation*}

\textbf{Step 3: To estimate $\|(h_{3}(a,m))^{\ell}\|_{\tilde{L}_{t}^{1}(\dot{B}_{q,1}^{\frac{7}{q}-2})}$.} For $2\leq q,\, \, \frac{6}{p}+ \frac{2}{q}>\frac{3}{p}+ \frac{2}{q}>1$, with the help of Lemma \ref{linestimate} and Lemma \ref{lem key}, there holds
\begin{equation} \label{ga1}
\begin{aligned}
&\|(T_{G(a)} a^\ell)^{\ell}\|_{\tilde{L}_{t}^{1}(\dot{B}_{q,1}^{\frac{7}{q}-2})}
\lesssim\|G(a)\|_{\tilde{L}_{t}^{2}(\dot{B}_{\infty,1}^{\frac{2}{q}-1})}
\|a^\ell\|_{\tilde{L}_{t}^{2}(\dot{B}_{q,1}^{\frac{5}{q}-1})}\\
&\lesssim\|G(a)\|_{\tilde{L}_{t}^{2}(\dot{B}_{p,1}^{\frac{3}{p}+\frac{2}{q}-1})}
\|a^\ell\|_{\tilde{L}_{t}^{2}(\dot{B}_{q,1}^{\frac{5}{q}-1})}\\
&\lesssim(1+\|a\|_{\tilde{L}_{t}^{\infty}(\dot{B}_{p,1}^{\frac{3}{p}})})
\|a\|_{\tilde{L}_{t}^{2}(\dot{B}_{p,1}^{\frac{3}{p}+\frac{2}{q}-1})}
\|a^\ell\|_{\tilde{L}_{t}^{2}(\dot{B}_{q,1}^{\frac{5}{q}-1})}\\
&\lesssim(1+\|a\|_{\tilde{L}_{t}^{\infty}(\dot{B}_{p,1}^{\frac{3}{p}})})
(\|a^{h}\|_{\tilde{L}_{t}^{2}(\dot{B}_{p,1}^{\frac{3}{p}})}+
\|a^{\ell}\|_{\tilde{L}_{t}^{2}(\dot{B}_{q,1}^{\frac{5}{q}-1})})
\|a^\ell\|_{\tilde{L}_{t}^{2}(\dot{B}_{q,1}^{\frac{5}{q}-1})}\lesssim (1+\mathcal{X})\mathcal{X}^{2}.
\end{aligned}
\end{equation}
On the other hand, with the help of Lemma \ref{linestimate} and Lemma \ref{lem key}, for $2\leq p<6,\,\, 2\leq q\leq p \leq 2q, \,\,\frac{3}{q} - \frac{3}{p} \leq 1$, it holds
\begin{equation} \label{ga2}
\begin{aligned}
&\|(T_{G(a)} a^h)^{\ell}\|_{\tilde{L}_{t}^{1}(\dot{B}_{q,1}^{\frac{7}{q}-2})}
\lesssim\|(T_{G(a)} a^h)^{\ell}\|_{\tilde{L}_{t}^{1}(\dot{B}_{q,1}^{\frac{3}{q}-2})}
\lesssim \|G(a)\|_{\tilde{L}_{t}^{\infty}(\dot{B}_{p,1}^{\frac{3}{p}-1})}
\|a^h\|_{\tilde{L}_{t}^{1}(\dot{B}_{p,1}^{-1+\frac{3}{p}})}
\\
&\lesssim(1+\|a\|_{\tilde{L}_{t}^{\infty}(\dot{B}_{p,1}^{\frac{3}{p}})})\|a\|_{\tilde{L}_{t}^{\infty}(\dot{B}_{p,1}^{\frac{3}{p}-1})}
\|a^h\|_{\tilde{L}_{t}^{1}(\dot{B}_{p,1}^{\frac{3}{p}})}
\lesssim (1+\mathcal{X})\mathcal{X}^{2}.
\end{aligned}
\end{equation}
By Taylor's formula, 
\[
G(a)= G'(0)a + g(a)a, \,  \,  g(0)=0
\]
and given $2\leq p<6,\,\, 2\leq q\leq p \leq 2q, \,\,\frac{3}{q} - \frac{3}{p} \leq 1$,
by Lemma \ref{lem key} and Lemma \ref{linestimate}, similar to \eqref{ha} and \eqref{ia}, we have
\begin{equation} \label{ga3}
\begin{aligned}
&\|(G(a))^{\ell}\|_{\tilde{L}_{t}^{2}(\dot{B}_{q,1}^{\frac{5}{q}-1})}
\lesssim (1+\mathcal{X}+\mathcal{X}^{2})\mathcal{X}.
\end{aligned}
\end{equation}
With aid of Remark~\ref{remarkbon} and (\ref{ga3}), we have
\begin{equation} \label{ga4}
\begin{aligned}
&\|(T_{a} (G(a))^\ell)^{\ell}\|_{\tilde{L}_{t}^{1}(\dot{B}_{q,1}^{\frac{7}{q}-2})}
\lesssim\|a\|_{\tilde{L}_{t}^{2}(\dot{B}_{\infty,1}^{\frac{2}{q}-1})}
\|(G(a))^\ell\|_{\tilde{L}_{t}^{2}(\dot{B}_{q,1}^{\frac{5}{q}-1})}\\
&\lesssim(\|a^{\ell}\|_{\tilde{L}_{t}^{2}(\dot{B}_{q,1}^{\frac{5}{q}-1})}
+\|a^{h}\|_{\tilde{L}_{t}^{2}(\dot{B}_{p,1}^{\frac{2}{q}+\frac{3}{p}-1})})
\|(G(a))^\ell\|_{\tilde{L}_{t}^{2}(\dot{B}_{q,1}^{\frac{5}{q}-1})}\\
&\lesssim(\|a^{\ell}\|_{\tilde{L}_{t}^{2}(\dot{B}_{q,1}^{\frac{5}{q}-1})}
+\|a^{h}\|_{\tilde{L}_{t}^{2}(\dot{B}_{p,1}^{\frac{3}{p}})})
\|(G(a))^\ell\|_{\tilde{L}_{t}^{2}(\dot{B}_{q,1}^{\frac{5}{q}-1})}\\
&\lesssim (1+\mathcal{X}+\mathcal{X}^{2})\mathcal{X}^{2}.
\end{aligned}
\end{equation}
Since
$$(T_{a} G(a)^h)^{\ell} =
 [S_{k_0}, T_{a}] (G(a))^h + 
T_{a} S_{k_0} (G(a))^h.$$
For $2\leq q\leq p \leq 2q$,  with aid of Lemma \ref{commo} and Lemma \ref{linestimate}, we get
\begin{equation} \label{ga5}
\begin{aligned}
&\|[S_{k_0}, T_{a}] (G(a))^h\|_{\tilde{L}_{t}^{1}(\dot{B}_{q,1}^{\frac{7}{q}-2})}
\lesssim\|\nabla a\|_{\tilde{L}_{t}^{\infty}(\dot{B}_{p,1}^{\frac{3}{p}-1})}
\|(G(a))^h\|_{\tilde{L}_{t}^{1}(\dot{B}_{p,1}^{\frac{3}{p}+\frac{4}{q}-2})}
\\
&\lesssim\|\nabla a\|_{\tilde{L}_{t}^{\infty}(\dot{B}_{p,1}^{\frac{3}{p}-1})}
\|(G(a))^h\|_{\tilde{L}_{t}^{1}(\dot{B}_{p,1}^{\frac{3}{p}})}
\\
&\lesssim (1+\|a\|_{\tilde{L}_{t}^{\infty}(\dot{B}_{p,1}^{\frac{3}{p}})})\|a\|_{\tilde{L}_{t}^{\infty}(\dot{B}_{p,1}^{\frac{3}{p}})}
\|a\|_{\tilde{L}_{t}^{1}(\dot{B}_{p,1}^{\frac{3}{p}})}
\lesssim (1+\mathcal{X})\mathcal{X}^{2}.
\end{aligned}
\end{equation}
 Similar to (\ref{ga4}), we have
\begin{equation} \label{ga6}
\begin{aligned}
\|T_{a} S_{k_0} (G(a))^h\|_{\tilde{L}_{t}^{1}(\dot{B}_{q,1}^{\frac{7}{q}-2})}
\lesssim (1+\mathcal{X}+\mathcal{X}^{2})\mathcal{X}^{2}.
\end{aligned}
\end{equation}
Summing up above all terms (\ref{ga4}), (\ref{ga5}) and  (\ref{ga6}), we have
\begin{equation} \label{ga66}
\begin{aligned}
&\|(T_{a} (G(a)))^{\ell}\|_{\tilde{L}_{t}^{1}(\dot{B}_{q,1}^{\frac{7}{q}-2})}
\\
&\lesssim \|(T_{a} (G(a))^{\ell})^{\ell}\|_{\tilde{L}_{t}^{1}(\dot{B}_{q,1}^{\frac{7}{q}-2})}
+\|(T_{a} (G(a))^{h})^{\ell}\|_{\tilde{L}_{t}^{1}(\dot{B}_{q,1}^{\frac{7}{q}-2})}
\lesssim \mathcal{X}^{2}.
\end{aligned}
\end{equation}
 When $\frac{2}{q} + \frac{3}{p}>1$ and $2\leq q\leq p \leq 2q$, Lemma \ref{lem key} and Lemma \ref{linestimate} give  
\begin{equation}\label{ga7}
\begin{aligned}
&\|(R(a^h, G(a)^h))^{\ell}\|_{\tilde{L}_{t}^{1}(\dot{B}_{q,1}^{\frac{7}{q}-2})}\\
&\lesssim\|a^h\|_{\tilde{L}_{t}^{2}(\dot{B}_{p,1}^{\frac{3}{p}})}
\|(G(a))^h\|_{\tilde{L}_{t}^{2}(\dot{B}_{p,1}^{\frac{3}{p} + \frac{4}{q} - 2})} \\
&\lesssim\|a^h\|_{\tilde{L}_{t}^{2}(\dot{B}_{p,1}^{\frac{3}{p}})}
\|(G(a))^h\|_{\tilde{L}_{t}^{2}(\dot{B}_{p,1}^{\frac{3}{p}})} \\
&\lesssim (1+\|a\|_{\tilde{L}_{t}^{\infty}(\dot{B}_{p,1}^{\frac{3}{p}})})\|a^h\|_{\tilde{L}_{t}^{2}(\dot{B}_{p,1}^{\frac{3}{p}})}
\|a\|_{\tilde{L}_{t}^{2}(\dot{B}_{p,1}^{\frac{3}{p}})}\lesssim (1+\mathcal{X})\mathcal{X}^{2}.
\end{aligned}
\end{equation}
When $\frac{5}{q} - 1> 0$, by Lemma \ref{lem key} and (\ref{ga3}),  we have
\begin{equation}\label{ga8} 
\begin{aligned}
&\|(R(G(a)^\ell,a^\ell))^{\ell}\|_{\tilde{L}_{t}^{1}(\dot{B}_{q,1}^{\frac{7}{q}-2})}\\
&\lesssim\|(G(a))^\ell\|_{\tilde{L}_{t}^{2}(\dot{B}_{q,1}^{\frac{5}{q}-1})}
\|a^\ell\|_{\tilde{L}_{t}^{2}(\dot{B}_{q,1}^{-1+\frac{5}{q}})}
\lesssim (1+\mathcal{X}+\mathcal{X}^{2})\mathcal{X}^{2}.
\end{aligned}
\end{equation}
Summing up (\ref{ga7}) and (\ref{ga8}), we have
\begin{eqnarray}\label{ga9}
\|(R(G(a),a))^{\ell}\|_{\tilde{L}_{t}^{1}(\dot{B}_{q,1}^{\frac{7}{q}-2})}
\lesssim \mathcal{X}^{2}.
\end{eqnarray}
Summing up  (\ref{ga1}), (\ref{ga2}), (\ref{ga66}) and (\ref{ga9}), we finish the pressure term
\begin{equation*}
\begin{aligned}
\|(\nabla(G(a)a))^{\ell}\|_{\tilde{L}_{t}^{1}(\dot{B}_{q,1}^{\frac{7}{q}-3})}
\lesssim\|(G(a)a)^{\ell}\|_{\tilde{L}_{t}^{1}(\dot{B}_{q,1}^{\frac{7}{q}-2})}
\lesssim \mathcal{X}^{2}.
\end{aligned}
\end{equation*}

\textbf{Step 4: 
To estimate  $\|(h_2(a, m))^{\ell}\|_{\tilde{L}_{t}^{1}(\dot{B}_{q,1}^{\frac{7}{q}-3})}$.} 
We first consider the quadratic viscous term
\begin{equation} \label{iaml1}
\begin{aligned}
\|( \Delta (I(a) m))^{\ell}\|_{\tilde{L}_{t}^{1}(\dot{B}_{q,1}^{\frac{7}{q}-3})}
\lesssim\|(I(a) m)^{\ell}\|_{\tilde{L}_{t}^{1}(\dot{B}_{q,1}^{\frac{7}{q}-1})}.
\end{aligned}
\end{equation}
For $2\leq q\leq p\leq 2q$ and $\frac{6}{p}+\frac{2}{q}>\frac{3}{p}+\frac{2}{q}>1$, with aid of Remark~\ref{remarkbon} and Lemma~\ref{linestimate}, we have
\begin{equation} \label{iaml2}
\begin{aligned}
&\|(T_{I(a)} m^\ell)^{\ell}\|_{\tilde{L}_{t}^{1}(\dot{B}_{q,1}^{\frac{7}{q}-1})}
\lesssim\|(T_{I(a)} m^\ell)^{\ell}\|_{\tilde{L}_{t}^{1}(\dot{B}_{q,1}^{\frac{7}{q}-2})}
\lesssim\|I(a)\|_{\tilde{L}_{t}^{2}(\dot{B}_{\infty,1}^{\frac{2}{q}-1})}
\|m^\ell\|_{\tilde{L}_{t}^{2}(\dot{B}_{q,1}^{\frac{5}{q}-1})}\\
&\lesssim\|I(a)\|_{\tilde{L}_{t}^{2}(\dot{B}_{p,1}^{\frac{3}{p}+\frac{2}{q}-1})}
\|m^\ell\|_{\tilde{L}_{t}^{2}(\dot{B}_{q,1}^{\frac{5}{q}-1})}\\
&\lesssim(1+\|a\|_{\tilde{L}_{t}^{\infty}(\dot{B}_{p,1}^{\frac{3}{p}})})
\|a\|_{\tilde{L}_{t}^{2}(\dot{B}_{p,1}^{\frac{3}{p}+\frac{2}{q}-1})}
\|m^\ell\|_{\tilde{L}_{t}^{2}(\dot{B}_{q,1}^{\frac{5}{q}-1})}\\
&\lesssim(1+\|a\|_{\tilde{L}_{t}^{\infty}(\dot{B}_{p,1}^{\frac{3}{p}})})( \|a^{h}\|_{\tilde{L}_{t}^{2}(\dot{B}_{p,1}^{\frac{3}{p}})} 
+ \|a^{\ell}\|_{\tilde{L}_{t}^{2}(\dot{B}_{q,1}^{\frac{5}{q}-1})} )
\|m^\ell\|_{\tilde{L}_{t}^{2}(\dot{B}_{q,1}^{\frac{5}{q}-1})}
\\
&\lesssim (1+\mathcal{X})\mathcal{X}\mathcal{Y}.
\end{aligned}
\end{equation}
Using  $$(T_{I(a)} m^h )^\ell= [S_{k_0}, T_{I(a)}] m^h +
 T_{I(a)} S_{k_0} m^h.$$ For 
 $2 \leq q \leq p \leq 2q$, using (\ref{m1}), Lemma~\ref{commo} and Lemma~\ref{linestimate}, we have
\begin{equation} \label{iaml3}
\begin{aligned}
&\|[S_{k_{0}},T_{I(a)}]m^h\|_{\tilde{L}_{t}^{1}(\dot{B}_{q,1}^{\frac{7}{q}-1})}
\leq\|[S_{k_{0}},T_{I(a)}]m^h\|_{\tilde{L}_{t}^{1}(\dot{B}_{q,1}^{\frac{7}{q}-2})}\\
&\lesssim\|\nabla I(a)\|_{\tilde{L}_{t}^{2}(\dot{B}_{p,1}^{\frac{3}{p}-1})}
\|m^h\|_{\tilde{L}_{t}^{2}(\dot{B}_{p,1}^{\frac{3}{p}+\frac{4}{q}-2})}\\
&\lesssim (1+\|a\|_{\tilde{L}_{t}^{\infty}(\dot{B}_{p,1}^{\frac{3}{p}})})
\|a\|_{\tilde{L}_{t}^{2}(\dot{B}_{p,1}^{\frac{3}{p}})}
\|m\|_{\tilde{L}_{t}^{2}(\dot{B}_{p,1}^{\frac{3}{p}})}
\lesssim(1+\mathcal{X})\mathcal{X}^{2}.
\end{aligned}
\end{equation}
 Since 
$\|S_{k_0} m^h\|_{\tilde{L}_{t}^{1}(\dot{B}_{q,1}^{\frac{3}{p}+\frac{4}{q}})}$
 is a sum over finite terms, by Lemma~\ref{lem key} and Lemma~\ref{linestimate}, we have
\begin{equation} \label{iaml4}
\begin{aligned}
&\|T_{I(a)} S_{k_0} m^h\|_{\tilde{L}_{t}^{1}(\dot{B}_{q,1}^{\frac{7}{q}-1})}
 \lesssim \|I(a)\|_{\tilde{L}_{t}^{\infty}(\dot{B}_{p,1}^{-1+\frac{3}{p}})}
\|S_{k_0} m^h\|_{\tilde{L}_{t}^{1}(\dot{B}_{p,1}^{\frac{3}{p}+\frac{4}{q}})}\\
&\lesssim\|I(a)\|_{\tilde{L}_{t}^{\infty}(\dot{B}_{p,1}^{-1+\frac{3}{p}})}
\|m^h\|_{\tilde{L}_{t}^{1}(\dot{B}_{p,1}^{\frac{3}{p}})}
\lesssim(1+\mathcal{X})\mathcal{X}\mathcal{Y}.
\end{aligned}
\end{equation}
Under the conditions $\frac{3}{q} - \frac{3}{p} 
\leq 1$ and $ 2\leq q \leq p \leq 2q$, Lemma~\ref{lem key} and
Lemma~\ref{linestimate} imply
\begin{equation} \label{iaml5}
\begin{aligned}
&\|(T_{m}I(a))^{\ell}\|_{\tilde{L}_{t}^{1}(\dot{B}_{q,1}^{\frac{7}{q}-1})}
\lesssim\|(T_{m}I(a))^{\ell}\|_{\tilde{L}_{t}^{1}(\dot{B}_{q,1}^{\frac{3}{q}-1})}
\\
&\lesssim\|m\|_{\tilde{L}_{t}^{\infty}(\dot{B}_{p,1}^{\frac{3}{p}-1})}
\|I(a)\|_{\tilde{L}_{t}^{1}(\dot{B}_{p,1}^{\frac{3}{p}})}\\
&\lesssim\|m\|_{\tilde{L}_{t}^{\infty}(\dot{B}_{p,1}^{\frac{3}{p}-1})}
(1+\|a\|_{\tilde{L}_{t}^{\infty}(\dot{B}_{p,1}^{\frac{3}{p}})})
\|a\|_{\tilde{L}_{t}^{1}(\dot{B}_{p,1}^{\frac{3}{p}})}
\lesssim (1+\mathcal{X})\mathcal{X}\mathcal{Y}.
\end{aligned}
\end{equation}
When $\frac{2}{q} + \frac{3}{p} >1$, using (\ref{m1}), Lemma~\ref{lem key}
 and Lemma \ref{linestimate}, we have
\begin{equation}\label{iaml6}
\begin{aligned}
&\|(R(m^h, I(a)^h))^{\ell}\|_{\tilde{L}_{t}^{1}(\dot{B}_{q,1}^{\frac{7}{q}-2})} \\
&\lesssim\|m^h\|_{\tilde{L}_{t}^{2}(\dot{B}_{p,1}^{\frac{3}{p}})}
\|(I(a))^h\|_{\tilde{L}_{t}^{2}(\dot{B}_{p,1}^{\frac{3}{p} + \frac{4}{q} - 2})} \\
&\lesssim\|m^h\|_{\tilde{L}_{t}^{2}(\dot{B}_{p,1}^{\frac{3}{p}})}
\|(I(a))^h\|_{\tilde{L}_{t}^{2}(\dot{B}_{p,1}^{\frac{3}{p}})} \\
&\lesssim\|m^h\|_{\tilde{L}_{t}^{2}(\dot{B}_{p,1}^{\frac{3}{p}})}
(1+\|a\|_{\tilde{L}_{t}^{\infty}(\dot{B}_{p,1}^{\frac{3}{p}})})
\|a\|_{\tilde{L}_{t}^{2}(\dot{B}_{p,1}^{\frac{3}{p}})}\lesssim 
(1+\mathcal{X})\mathcal{X}^{2}.
\end{aligned}
\end{equation}
Similar to \eqref{ha} and \eqref{ia}, we have 
 $$\|(I(a))^{\ell}
\|_{\tilde{L}_t^2(\dot{B}_{q,1}^{-1+\frac{5}{q}})}\lesssim 
(1+\mathcal{X}+\mathcal{X}^{2})\mathcal{X},$$
 from which and Lemma~\ref{lem key}, for $q<5$ we have
\begin{equation}\label{iaml7}
\begin{aligned}
\|(R(m^\ell,I(a)^\ell))^{\ell}\|_{\tilde{L}_{t}^{1}(\dot{B}_{q,1}^{\frac{7}{q}-2})}
&\lesssim\|(I(a))^\ell\|_{\tilde{L}_{t}^{2}(\dot{B}_{q,1}^{\frac{5}{q}-1})}
\|m^\ell\|_{\tilde{L}_{t}^{2}(\dot{B}_{q,1}^{-1+\frac{5}{q}})}\\
&\lesssim
(1+\mathcal{X}+\mathcal{X}^{2})\mathcal{X}\mathcal{Y}.
\end{aligned}
\end{equation}
Summing up (\ref{iaml2})--(\ref{iaml7}) and using Proposition~\ref{momentum}, we conclude
\begin{equation*}
\begin{aligned}
\|(-\mu \Delta (I(a) m))^{\ell}\|_{\tilde{L}_{t}^{1}(\dot{B}_{q,1}^{\frac{7}{q}-3})}
\lesssim \mathcal{X}^{2}.
\end{aligned}
\end{equation*}
Similarly, we can prove
\begin{equation*}
\begin{aligned}
\|( (\mu + \lambda) \nabla \mathrm{div} (I(a) m))^{\ell}\|_{\tilde{L}_{t}^{1}(\dot{B}_{q,1}^{\frac{7}{q}-3})}
\lesssim \mathcal{X}^{2}.
\end{aligned}
\end{equation*}
Consequently, gathering the results from steps 1--4, we end up 
\begin{equation*}
\begin{aligned}
\|(h(a,m))^{\ell}\|_{L_{t}^{1}(\dot{B}_{q,1}^{\frac{7}{q}-3}}
\lesssim \mathcal{X}^{2}.
\end{aligned}
\end{equation*}
We complete the proof.
\end{proof}

\subsection{The high frequency analysis}
For high frequency estimate, we employ $L^p$ estimates using the effective velocity technique (see \cite{H20111, H20112}). This approach decouples the system and mitigates potential derivative loss.
\begin{prop}\label{highfre} Assume $2 \leq p< 6,\,\,2 \leq q \leq p \leq 2q$, then we have
\begin{equation*}
\begin{aligned}
&\|u^{h}\|_{\tilde{L}_{t}^{\infty}(\dot{B}_{p,1}^{-1+3/p}) \cap 
\tilde{L}_{t}^{1}(\dot{B}_{p,1}^{1+3/p})} + 
\|a^{h}\|_{\tilde{L}_{t}^{\infty}(\dot{B}_{p,1}^{3/p}) \cap 
L_{t}^{1}(\dot{B}_{p,1}^{3/p})} 
\lesssim \mathcal{X}_0+\mathcal{X}^{2}.
\end{aligned}
\end{equation*}
\end{prop}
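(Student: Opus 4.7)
The plan is to adapt the effective velocity technique of Haspot~\cite{H20111, H20112}. Introduce
\[
w := u + \nabla(-\Delta)^{-1}a,
\]
a one-order-smoothing perturbation of $u$ at high frequencies, so that $u^h$ and $w^h$ lie in the same target Besov spaces once $a^h$ is controlled. Using the normalization $2\mu+\lambda=1$ one verifies the algebraic identities $\mathcal{A}u=\mathcal{A}w+\nabla a$ and $\Div u=\Div w+a$, and substitution into \eqref{CNS} converts the system into
\begin{equation*}
\left\{\begin{aligned}
&\partial_t a+u\cdot\nabla a+a=F_a,\\
&\partial_t w-\mathcal{A}w=F_w,
\end{aligned}\right.
\end{equation*}
where $F_a=-(1+a)\Div w-a^2$ and
\[
F_w=\mathcal{Q}w-\nabla(-\Delta)^{-1}a-u\cdot\nabla u-I(a)\mathcal{A}u-k(a)\nabla a-\nabla(-\Delta)^{-1}\Div(au).
\]
The key gain is that $a$ now satisfies a damped transport equation with $O(1)$ damping, while $w$ satisfies a pure parabolic equation; the troublesome $\nabla a$--$\Div u$ pairing is hidden inside the change of variables.

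Next I apply Lemma~\ref{transport} to $a^h$ with $s=3/p$, $p_1=p$, and $\lambda=1$. The bootstrap smallness $\mathcal{X}\ll 1$ gives $V(t)\lesssim\|u\|_{\tilde L^1_t\dot B^{1+3/p}_{p,1}}\le\mathcal{X}$, hence $e^{CV(t)}\lesssim 1$, yielding
\[
\|a^h\|_{\tilde L^\infty_t(\dot B^{3/p}_{p,1})\cap\tilde L^1_t(\dot B^{3/p}_{p,1})}\lesssim\|a_0^h\|_{\dot B^{3/p}_{p,1}}+\|F_a\|_{\tilde L^1_t(\dot B^{3/p}_{p,1})}.
\]
In parallel, Lemma~\ref{heatm} applied to $w^h$ with $s=-1+3/p$ in the two pairs $(\varrho_1,\varrho_2)\in\{(\infty,1),(1,1)\}$ gives
\[
\|w^h\|_{\tilde L^\infty_t(\dot B^{-1+3/p}_{p,1})\cap\tilde L^1_t(\dot B^{1+3/p}_{p,1})}\lesssim\|w_0^h\|_{\dot B^{-1+3/p}_{p,1}}+\|F_w\|_{\tilde L^1_t(\dot B^{-1+3/p}_{p,1})}.
\]
Summing these two bounds couples $(a^h,w^h)$ through $\|\Div w^h\|_{\tilde L^1_t\dot B^{3/p}_{p,1}}=\|w^h\|_{\tilde L^1_t\dot B^{1+3/p}_{p,1}}$ (appearing inside $F_a$) and $\|\nabla(-\Delta)^{-1}a^h\|_{\tilde L^1_t\dot B^{-1+3/p}_{p,1}}\lesssim 2^{-k_0}\|a^h\|_{\tilde L^1_t\dot B^{3/p}_{p,1}}$ (appearing inside $F_w$); choosing the cutoff $k_0$ sufficiently large absorbs the $a^h$-feedback into $F_w$, while the $w^h$-feedback into $F_a$ is part of the closed linear system on $(a^h,w^h)$. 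Reverting via $u^h=w^h-\nabla(-\Delta)^{-1}a^h$ then recovers the $u^h$ norms from already-controlled quantities.

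The remaining task is to bound the nonlinear pieces of $F_a$ and $F_w$ by $\mathcal{X}^2$. Bony's paraproduct decomposition combined with Lemma~\ref{prodlemma} (bilinear estimates, unconditional for $p\ge 2$ since $d\max\{0,2/p-1\}=0$) and Lemma~\ref{linestimate} (for the compositions $I(a)$, $k(a)$) reduces each piece to a product of norms already in the resolution space. The convection $u\cdot\nabla u$, the pressure correction $k(a)\nabla a$, the quadratic terms $a^2$ and $a\Div w$, and $\nabla(-\Delta)^{-1}\Div(au)$ are all routine; any low-frequency leakage through high-high products is controlled using Proposition~\ref{lowfre}. The main obstacle is the viscous nonlinearity $I(a)\mathcal{A}u$, which carries two derivatives on $u$. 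The paraproduct $T_{I(a)}\mathcal{A}u$ is handled through $\dot B^{3/p}_{p,1}\hookrightarrow L^\infty$ together with $\|u^h\|_{\tilde L^1_t\dot B^{1+3/p}_{p,1}}\le\mathcal{X}$. The Bony remainder $R(I(a),\mathcal{A}u)$ and the opposite paraproduct $T_{\mathcal{A}u}I(a)$ require the Besov product estimate with indices $s_1=3/p$ and $s_2=-1+3/p$, whose admissibility condition $s_1+s_2=6/p-1>0$ is precisely $p<6$. This is where the three-dimensional threshold in the statement originates; the auxiliary conditions on $(q,p)$ are used only to invoke Proposition~\ref{lowfre} for the low-frequency leakage. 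Assembling every piece yields the announced estimate.
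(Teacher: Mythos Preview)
Your proposal is correct and follows the same effective-velocity strategy as the paper. The only structural difference is cosmetic: the paper first splits $u=\mathbb{P}u+\mathbb{Q}u$ via the Leray projectors, treats $\mathbb{P}u$ by the heat estimate directly, and then introduces the effective velocity $w=\mathbb{Q}u+\nabla(-\Delta)^{-1}a$ for the potential part, whereas you keep the full velocity in $w=u+\nabla(-\Delta)^{-1}a$ and absorb the resulting linear term $\mathcal{Q}w$ into the parabolic estimate via the same $2^{-2k_0}$ smallness; both routes close identically.

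One small point deserves correction. Your appeal to Proposition~\ref{lowfre} for ``low-frequency leakage'' is unnecessary and conceptually misplaced: Proposition~\ref{highfre} is an a~priori estimate in terms of the full quantity $\mathcal{X}$, and the low-frequency contributions to norms such as $\|a\|_{\tilde L^\infty_t\dot B^{3/p}_{p,1}}$ or $\|u\|_{\tilde L^1_t\dot B^{1+3/p}_{p,1}}$ are bounded by $\mathcal{X}$ directly via the embeddings recorded at the start of Section~4 (using only $q\le p$ and the finite frequency cutoff $j\le k_0$), not via the separate low-frequency proposition. The two propositions are proved independently and only combined afterward in Proposition~\ref{priori}.
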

\begin{proof}
Let $\mathbb{P} := I+ \nabla(-\Delta)^{-1} \mathrm{div}$ and $\mathbb{Q} :=-\nabla(-\Delta)^{-1} \mathrm{div}$ denote the orthogonal projectors onto the spaces of divergence-free and potential vector fields, respectively. We consider the linearized compressible Navier-Stokes system. We obtain
\begin{equation}\label{linearized CNS}
\left\{
\begin{aligned}
& \partial_t a + \operatorname{div} u =f, \\
& \partial_t u - \mathcal{A} u +  \nabla a = g,
\end{aligned}
\right.
\end{equation}
where $\mathcal{A} = \mu\Delta + (\mu + \lambda)\nabla \operatorname{div}$, and $g = \sum_{i=1}^3 g_i$ with
$$ f = -\operatorname{div} (a u),\, g_1 = -u \cdot \nabla u,\, g_2 =-I(a) \mathcal{A}u,\,
g_3 =k(a) \nabla a,
$$
where $1 = 2\mu + \lambda$,
 $G'(a) =\frac{ P'(a+1)}{a+1} $. Let $I(a) = \frac{a}{a+1}$ 
  and $k(a)=G'(a)-G'(0)$ are smooth functions vanishing at the origin.
   Applying the projections $\mathbb{P}$ and $\mathbb{Q}$ to \eqref{linearized CNS} yields
\begin{equation}\label{lincns}
\left\{
\begin{aligned}
\partial_t a + \operatorname{div} \mathbb{Q} u &= f, \\
\partial_t \mathbb{Q} u -  \Delta \mathbb{Q} u +  \nabla a &= \mathbb{Q} g, \\
\partial_t \mathbb{P} u - \mu \Delta \mathbb{P} u &= \mathbb{P} g.
\end{aligned}
\right.
\end{equation}

\textbf{Step 1. Incompressible part}

First, for the incompressible part, we immediately obtain
\begin{equation}\label{puh1}
\|(\mathbb{P} u)^{h}\|_{\tilde{L}_{t}^{\infty}(\dot{B}_{p,1}^{-1+3/p})} +
\|(\mathbb{P} u)^{h}\|_{\tilde{L}_{t}^{1}(\dot{B}_{p,1}^{3/p+1})}
\lesssim \|(\mathbb{P} u_{0})^{h}\|_{\dot{B}_{p,1}^{-1+3/p}} + 
\|(\mathbb{P} g)^{h}\|_{\tilde{L}_{t}^{1}(\dot{B}_{p,1}^{3/p-1})}.
\end{equation}

\textbf{Step 2. Effective Velocity}

Define the effective velocity 
$w := \nabla(-\Delta)^{-1}a + \mathbb{Q}u$. From \eqref{linearized CNS}, we derive
\[
\partial_t w - \Delta w = \nabla(-\Delta)^{-1}(f - \operatorname{div} g) + w - \nabla(-\Delta)^{-1}a.
\]
Applying Lemma \ref{heatm} (restricted to high frequencies) and Bernstein's inequality yields
\begin{equation}
\begin{aligned}
&\|w^{h}\|_{\tilde{L}_{t}^{\infty}(\dot{B}_{p,1}^{-1+3/p})} + 
\|w^{h}\|_{\tilde{L}_{t}^{1}(\dot{B}_{p,1}^{3/p+1})} \\
&\lesssim \|w_0^{h}\|_{\dot{B}_{p,1}^{-1+3/p}} + 
\|((f, \operatorname{div} g))^{h}\|_{\tilde{L}_{t}^{1}(\dot{B}_{p,1}^{3/p-2})} \\
&\quad + 2^{-2k_0} \left( \|w^{h} \|_{\tilde{L}_{t}^{1}(\dot{B}_{p,1}^{1+3/p})}+ 
\|a^{h}\|_{\tilde{L}_{t}^{1}(\dot{B}_{p,1}^{3/p})} \right) \\
&\lesssim \|(\mathbb{Q} u_0)^{h}\|_{\dot{B}_{p,1}^{-1+3/p}} + 
2^{-2k_0}\|a_0^{h}\|_{\dot{B}_{p,1}^{3/p}} \\
&\quad + 2^{-2k_0} \left( \|w^{h}\|_{\tilde{L}_{t}^{1}(\dot{B}_{p,1}^{1+3/p})} + 
\|a^{h}\|_{\tilde{L}_{t}^{1}(\dot{B}_{p,1}^{3/p})} \right) + 
\|(f, \operatorname{div} g)^{h}\|_{\tilde{L}_{t}^{1}(\dot{B}_{p,1}^{3/p-2})}.
\end{aligned}
\end{equation}
For sufficiently large $k_0$, this implies
\begin{equation}\label{wnh1}
\begin{aligned}
&\|w^{h}\|_{\tilde{L}_{t}^{\infty}(\dot{B}_{p,1}^{-1+3/p})} + 
\|w^{h}\|_{\tilde{L}_{t}^{1}(\dot{B}_{p,1}^{3/p+1})} \\
&\lesssim \|u_0^{h}\|_{\dot{B}_{p,1}^{-1+3/p}} + 
2^{-2k_0}\|a_0\|_{\dot{B}_{p,1}^{3/p}}^{h}+ 2^{-2k_0} \|a^{h}\|_{\tilde{L}_{t}^{1}(\dot{B}_{p,1}^{3/p})} + 
\|(f, \operatorname{div} g)^{h}\|_{\tilde{L}_{t}^{1}(\dot{B}_{p,1}^{3/p-2})}.
\end{aligned}
\end{equation}

\textbf{Step 3. Density Damping}

From \eqref{linearized CNS}, we observe that
\begin{equation}\label{aNtrans1}
\partial_t a + u \cdot \nabla a + a = -\operatorname{div} w - a \operatorname{div} u.
\end{equation}
Applying Lemma \ref{transport} (restricted to high frequencies) to \eqref{aNtrans1} and \eqref{wnh1} gives
\EQ{\label{anh1}
&\|a^{h} \|_{\tilde{L}_{t}^{\infty}(\dot{B}_{p,1}^{3/p}) \cap \tilde{L}_{t}^{1}(\dot{B}_{p,1}^{3/p})}\\
&\lesssim \|a_0^{h}\|_{\dot{B}_{p,1}^{3/p}} + 
\|(\operatorname{div} w - a \operatorname{div} u)^h\|_{\tilde{L}_{t}^{1}(\dot{B}_{p,1}^{3/p})}+\int_{0}^{t}
\|\nabla u\|_{\dot{B}_{p,1}^{3/p}}\|a\|_{\dot{B}_{p,1}^{3/p}}d\tau \\
&\lesssim (1 + 2^{-2k_0}) \|a_0^{h}\|_{\dot{B}_{p,1}^{3/p}} + 
\|u_0^h\|_{\dot{B}_{p,1}^{-1+3/p}}+ 2^{-2k_0} \|a^{h}\|_{\tilde{L}_{t}^{1}(\dot{B}_{p,1}^{3/p})} \\
& \ \ + 
\|(a\cdot \Div u)^h\|_{\tilde{L}_{t}^{1}(\dot{B}_{p,1}^{3/p})}
+\|(f, \operatorname{div} g)^{h}\|_{\tilde{L}_{t}^{1}(\dot{B}_{p,1}^{3/p-2})}.
}

We now focus on high frequencies, where the analysis is classical. Specifically,  
Sobolev embedding yields
\[
\|(a,u)\|_{\tilde{L}_{t}^{2}(\dot{B}_{p,1}^{3/p})} \lesssim 
\|(a,u)^h\|_{\tilde{L}_{t}^{2}(\dot{B}_{p,1}^{3/p})} + 
\|(a,u)^\ell\|_{\tilde{L}_{t}^{2}(\dot{B}_{q,1}^{-1+5/q})} \lesssim \mathcal{X},
\]
from which and Lemma~\ref{prodlemma}, we have
\[
\|(a \operatorname{div} u)^{h}\|_{\tilde{L}_{t}^{1}(\dot{B}_{p,1}^{3/p})} \lesssim 
\|a\|_{\tilde{L}_{t}^{\infty}(\dot{B}_{p,1}^{3/p})} \|u\|_{\tilde{L}_{t}^{1}(\dot{B}_{p,1}^{1+3/p})}\lesssim \mathcal{X}^{2}
\]
and
\begin{equation}\label{aulow1}
\begin{aligned}
\|f^{h}\|_{\tilde{L}_{t}^{1}(\dot{B}_{p,1}^{-2+3/p})} \lesssim 
\|(au)^{h}\|_{\tilde{L}_{t}^{1}(\dot{B}_{p,1}^{3/p})} \lesssim
\|u\|_{\tilde{L}_{t}^{2}(\dot{B}_{p,1}^{3/p})}\|a\|_{\tilde{L}_{t}^{2}(\dot{B}_{p,1}^{3/p})}
 \lesssim \mathcal{X}^{2}
\end{aligned}
\end{equation}
and similarly for the velocity,
 $\|a \operatorname{div} u\|_{L_{t}^{1}(\dot{B}_{p,1}^{3/p})}^{h} \lesssim \mathcal{X}^2$. For $q\geq2$ and $2\leq p<6$, using Lemma~\ref{prodlemma}, then we have
\begin{equation}\label{aulow2}
\begin{aligned}
&\| (\operatorname{div} g)^{h}\|_{\tilde{L}_{t}^{1}(\dot{B}_{p,1}^{3/p-2})} \lesssim 
\| g^{h} \|_{\widetilde{L}_{t}^{1}(\dot{B}_{p,1}^{3/p-1})}
 \lesssim
 \|u\|_{\tilde{L}_{t}^{\infty}(\dot{B}_{p,1}^{-1+3/p})}
 \|\nabla u\|_{\tilde{L}_{t}^{1}(\dot{B}_{p,1}^{3/p})}\\
&
 +\|a\|_{\tilde{L}_{t}^{\infty}(\dot{B}_{p,1}^{-1+3/p})}
 \|\nabla^{2} u\|_{\tilde{L}_{t}^{1}(\dot{B}_{p,1}^{-1+3/p})}+\|a\|_{\tilde{L}_{t}^{2}(\dot{B}_{p,1}^{3/p})}
 \|\nabla u\|_{\tilde{L}_{t}^{2}(\dot{B}_{p,1}^{-1+3/p})}\lesssim \mathcal{X}^{2}.
\end{aligned}
\end{equation}
Combining \eqref{puh1}, \eqref{anh1},
\eqref{aulow1} and \eqref{aulow2},
and taking $k_0$ sufficiently large such that $2^{-k_0} \ll 1$, we obtain
\begin{equation}
\begin{aligned}
&\|u^{h}\|_{\tilde{L}_{t}^{\infty}(\dot{B}_{p,1}^{-1+3/p}) \cap 
\tilde{L}_{t}^{1}(\dot{B}_{p,1}^{1+3/p})} + 
\|a^{h}\|_{\tilde{L}_{t}^{\infty}(\dot{B}_{p,1}^{3/p}) \cap 
\tilde{L}_{t}^{1}(\dot{B}_{p,1}^{3/p})} \\
&\lesssim  \|u_0^{h}\|_{\dot{B}_{p,1}^{-1+3/p}} + 
\|a_0\|_{\dot{B}_{p,1}^{3/p}}^{h}+ \|a \operatorname{div} u\|_{\tilde{L}_{t}^{1}(\dot{B}_{p,1}^{3/p})} + 
\|(f, \operatorname{div} g)^{h}\|_{\tilde{L}_{t}^{1}(\dot{B}_{p,1}^{3/p-2})}\\
&\lesssim
\|u_0^{h}\|_{\dot{B}_{p,1}^{-1+3/p}} + 
\|a_0^{h}\|_{\dot{B}_{p,1}^{3/p}}+\mathcal{X}^{2}.
\end{aligned}
\end{equation}
We thus finish the proof of Proposition ~\ref
{highfre}.
\end{proof}

\section{Proof of Theorem~\ref{gwp}}

In this section we prove Theorem~\ref{gwp}. The proof proceeds first deriving the a priori estimates, implementing and completing the bootstrap argument using these estimates. Suppose $(a,u)$ solves \eqref{CNS} with
\begin{equation}\label{a1}
\mathcal{X} \lesssim 1 \quad \text{and} \quad \|a\|_{\widetilde{L}_{t}^{\infty}(\dot{B}_{p,1}^{3/p})} \lesssim 1.
\end{equation}
Under the condition
\begin{equation}\label{x2}
C\mathcal{X} \leq  \frac{1}{2},
\end{equation}
and using Propositions~\ref{lowfre} and~\ref{highfre}, we obtain
\begin{equation*}
\mathcal{X} \leq C\mathcal{X}_{0}+ C\mathcal{X}^2,
\end{equation*}
which yields
\begin{equation}\label{xsmall}
\mathcal{X} \leq C\mathcal{X}_{0}.
\end{equation}
Applying a standard bootstrap argument, we conclude that for sufficiently small initial data \(\mathcal{X}_{0}\), conditions \eqref{a1} and \eqref{x2} persist for all \(t \in [0, T^*)\).
Consequently, the solution $(a,u)$ exists globally with 
uniform bound \eqref{xsmall}, provided \(\mathcal{X}_{0}\) is chosen sufficiently small.

To prove the global existence and uniqueness, we are left to show that $X_{q,p}$ norm (see \eqref{bound}) is locally propagated along the flow. Let $(a,u)\in C_T\X_p$ be the local solution on $[0,T]$ obtained in the local well-posedness. We denote
\begin{align*}
E_p(t)=&\|a\|_{\tilde L^\infty_{T}( \dot{B}_{p,1}^{\frac{3}{p}})}+\| u\|_{\tilde L^\infty_{T}(\dot{B}_{p,1}^{-1+\frac{3}{p}})}+\| (\partial_{t}u,\nabla^2 u)\|_{ \tilde L^1_{T}(\dot{B}_{p,1}^{\frac{3}{p}})}\lesssim 1.
\end{align*}
Therefore, it suffices to show the boundedness of $\|(a,m)^{\ell}\|_{\tilde{L}^{\infty}_{T}(\dot{B}_{q,1}^{-3
+\frac{7}{q}})}$ provided that $\mathcal{X}_{0}$ is bounded. 

First we estimate $\|a^{\ell}\|_{\tilde{L}_{t}^{\infty}(\dot{B}_{p,1}^{-1
+\frac{3}{p}})}$, by the continuity equation and get
\begin{equation}\begin{aligned}\|a^{\ell}\|_{\tilde{L}_{t}^{\infty}(\dot{B}_{p,1}^{-1
+\frac{3}{p}})}&\lesssim
\|a_0\|_{\dot{B}_{p,1}^{-1
+\frac{3}{p}}}^{\ell}+\|\mathrm{div} (au)\|_{\tilde{L}_{t}^{1}(\dot{B}_{p,1}^{-1
+\frac{3}{p}})}^{\ell}\\&
\lesssim\|a_0\|_{\dot{B}_{p,1}^{-1
+\frac{3}{p}}}^{\ell}+T^{\frac 12}
\|a\|_{\tilde{L}_{t}^{\infty}(\dot{B}_{p,1}^{
\frac{3}{p}})}\|u\|_{\tilde{L}_{t}^{2}(\dot{B}_{p,1}^{\frac{3}{p}})}
\lesssim \mathcal{X}_{0}+T^{1/2}E^2_p(t).\end{aligned}
\end{equation}
Next we estimate the momentum using the momentum equation
$$\partial_t m - \mathcal{A} m  =- \nabla a+h(a, m)$$
and obtain
\begin{equation}
\begin{aligned}
&\|m^{\ell}\|_{\tilde{L}_{t}^{\infty}(\dot{B}_{q,1}^{-3
+\frac{7}{q}})\cap \tilde{L}_{t}^{1}(\dot{B}_{q,1}^{-1
+\frac{7}{q}})}\\
&\lesssim
\|m_0^{\ell}\|_{\dot{B}_{q,1}^{-3
+\frac{7}{q}}}+\|(\nabla a)^{\ell}\|_{\tilde{L}_{t}^{1}(\dot{B}_{q,1}^{-3
+\frac{7}{q}})}
+
\|h(a, m)^{\ell}\|_{\tilde{L}_{t}^{1}(\dot{B}_{q,1}^{-3
+\frac{7}{q}})}\\
&\lesssim\|m_0^{\ell}\|_{\dot{B}_{q,1}^{-3
+\frac{7}{q}}}+T2^{k_0}\| a\|^{\ell}_{\tilde{L}_{t}^{\infty}(\dot{B}_{q,1}^{-3
+\frac{7}{q}})}
+\|h(a, m)^{\ell}\|_{\tilde{L}_{t}^{1}(\dot{B}_{q,1}^{-3
+\frac{7}{q}})}.
\end{aligned}
\end{equation}
It remains to estimate $h$. 
For the convection terms, we focus on the term $\mathrm{div}(m\otimes m)$. We perform the Bony's paraproduct decomposition. We get for $q\leq4$ that 
\begin{equation*}
\begin{aligned}
\|(\mathrm{div}(T_m m))^{\ell}\|_{\tilde{L}_{t}^{1}(\dot{B}_{q,1}^{-3
+\frac{7}{q}})}&\lesssim\|(T_m m)^{\ell}\|_{\tilde{L}_{t}^{1}(\dot{B}_{q,1}^{-1
+\frac{3}{q}})}\\
&\lesssim
\|m\|_{\tilde{L}_{t}^{\infty}(\dot{B}_{p,1}^{-1
+\frac{3}{p}})}\|m\|_{\tilde{L}_{t}^{1}(\dot{B}_{p,1}^{\frac{3}{p}})}\lesssim T^{\frac 12} E^2_p(t),
\end{aligned}
\end{equation*}
where we utilize
$$\|m\|_{\tilde{L}_{t}^{\infty}(\dot{B}_{p,1}^{-1
+\frac{3}{p}})}\lesssim C_{\|a\|_{\dot{B}_{p,1}^{\frac{3}{p}}}}\|u\|_{\tilde{L}_{t}^{\infty}(\dot{B}_{p,1}^{-1
+\frac{3}{p}})}\lesssim E_p(t);$$
$$\|m\|_{\tilde{L}_{t}^{1}(\dot{B}_{p,1}^{\frac{3}{p}})}\lesssim T^{\frac 12}C_{\|a\|_{\dot{B}_{p,1}^{\frac{3}{p}}}}\|u\|_{\tilde{L}_{t}^{2}(\dot{B}_{p,1}^{\frac{3}{p}})}\lesssim T^{\frac 12}E_p(t),$$
with $C_{\|a\|_{\dot{B}_{p,1}^{\frac{3}{p}}}}=1+\|a\|_{\dot{B}_{p,1}^{\frac{3}{p}}}$. For the remainder, it holds
\begin{equation*}\begin{aligned}\|(\mathrm{div}(R(m, m)))^{\ell}\|_{\tilde{L}_{t}^{1}(\dot{B}_{q,1}^{-3
+\frac{7}{q}})}&\lesssim\|m\|_{\tilde{L}_{t}^{2}(\dot{B}_{p,1}^{\frac{3}{p}})}\|m\|_{\tilde{L}_{t}^{2}(\dot{B}_{p,1}^{-2
+\frac 4q+\frac{3}{p}})}.
\end{aligned}
\end{equation*}
Note that
$$\|m\|_{\tilde{L}_{t}^{\frac {2q}{4-q}}(\dot{B}_{p,1}^{-2
+\frac 4q+\frac{3}{p}})}\lesssim\|m\|^{2-\frac{4}{q}}_{\tilde{L}_{t}^{\infty}(\dot{B}_{p,1}^{-1
+\frac{3}{p}})}\|m\|^{\frac{4}{q}-1}_{\tilde{L}_{t}^{2}(\dot{B}_{p,1}^{\frac{3}{p}})}\lesssim E_p(t).$$
Thus we naturally get
\begin{equation*}
\begin{aligned}\|(\mathrm{div}(R(m, m)))^{\ell}\|_{\tilde{L}_{t}^{1}(\dot{B}_{q,1}^{-3
+\frac{7}{q}})}
\lesssim& T^{\frac{q-2}{2}}\|m\|_{\tilde{L}_{t}^{2}(\dot{B}_{p,1}^{\frac{3}{p}})}\|m\|_{\tilde{L}_{T}^{\frac {2q}{4-q}}(\dot{B}_{p,1}^{-2
+\frac 4q+\frac{3}{p}})}\lesssim T^{\frac{q-2}{2}} E^2_p(t),
\end{aligned}
\end{equation*}
On the other hand, if $q>4$, we have
\begin{equation*}\begin{aligned}\|(\mathrm{div}(T_m m))^{\ell} \|_{\tilde{L}_{t}^{1}(\dot{B}_{q,1}^{-3
+\frac{7}{q}})}&\lesssim
\|m\|_{\tilde{L}_{t}^{\infty}(\dot{B}_{p,1}^{-1
+\frac{3}{p}})}\|m\|_{\tilde{L}_{t}^{1}(\dot{B}_{p,1}^{-1
+\frac 4q+\frac{3}{p}})}\\&\lesssim T^{\frac{q-2}{q}}
\|m\|_{\tilde{L}_{t}^{\infty}(\dot{B}_{p,1}^{-1
+\frac{3}{p}})}\|m\|_{\tilde{L}_{T}^{\frac q2}(\dot{B}_{p,1}^{-1
+\frac 4q+\frac{3}{p}})}\lesssim T^{\frac{q-2}{q}}E^2_p(t),
\end{aligned}
\end{equation*}
where we used the following interpolation:
$$\|m\|_{\tilde{L}_{t}^{\frac q2}(\dot{B}_{p,1}^{-1
+\frac 4q+\frac{3}{p}})}\lesssim\|m\|^{1-\frac{4}{q}}_{\tilde{L}_{t}^{\infty}(\dot{B}_{p,1}^{-1
+\frac{3}{p}})}\|m\|^{\frac{4}{q}}_{\tilde{L}_{t}^{2}(\dot{B}_{p,1}^{\frac{3}{p}})}\lesssim E_p(t).$$
For the remainder, it also holds
\begin{equation*}\begin{aligned}\|(\mathrm{div}(R(m, m)))^{\ell}\|_{\tilde{L}_{t}^{1}(\dot{B}_{q,1}^{-3
+\frac{7}{q}})}&\lesssim\|m\|_{\tilde{L}_{t}^{\infty}(\dot{B}_{p,1}^{-1+\frac{3}{p}})}\|m\|_{\tilde{L}_{t}^{1}(\dot{B}_{p,1}^{-1
+\frac 4q+\frac{3}{p}})}\lesssim T^{\frac{q-2}{2}} E^2_p(t).
\end{aligned}
\end{equation*}
For the other term $\Div(I(a)m\otimes m)$, the proof is similar. This completes estimates for $h_1$.

For the viscous term, Bony decomposition and Lemma imply for $q\leq4$
\begin{equation*}
\begin{aligned}\|(\Delta(T_{I(a)}m))^{\ell}\|_{\tilde{L}_{t}^{1}(\dot{B}_{q,1}^{-3
+\frac{7}{q}})}&\lesssim\|(T_{I(a)}m)^{\ell}\|_{\tilde{L}_{t}^{1}(\dot{B}_{q,1}^{-1
+\frac{3}{q}})}\\
&\lesssim
\|I(a)\|_{\tilde{L}_{t}^{\infty}(\dot{B}_{p,1}^{-1
+\frac{3}{p}})}\|m\|_{\tilde{L}_{t}^{1}(\dot{B}_{p,1}^{\frac{3}{p}})}\lesssim T^{\frac 12} E^2_p(t),
\end{aligned}
\end{equation*}
\begin{equation*}\begin{aligned}\|(\Delta(T_{m}I(a)))^{\ell}\|_{\tilde{L}_{t}^{1}(\dot{B}_{q,1}^{-3
+\frac{7}{q}})}&\lesssim
\|m\|_{\tilde{L}_{t}^{\infty}(\dot{B}_{p,1}^{-1
+\frac{3}{p}})}\|I(a)\|_{\tilde{L}_{t}^{1}(\dot{B}_{p,1}^{\frac{3}{p}})}\lesssim T E^2_p(t);
\end{aligned}
\end{equation*}
\begin{equation*}\begin{aligned}\|(\Delta(R(I(a),m)))^{\ell}\|_{\tilde{L}_{t}^{1}(\dot{B}_{q,1}^{-3
+\frac{7}{q}})}&\lesssim
\|I(a)\|_{\tilde{L}_{t}^{2}(\dot{B}_{p,1}^{\frac{3}{p}})}\|m\|_{\tilde{L}_{t}^{2}(\dot{B}_{p,1}^{-2
+\frac 4q+\frac{3}{p}})}\lesssim T^{\frac{q-1}{2}} E^2_p(t).
\end{aligned}
\end{equation*}
Symmetrically we can handle with $q>4$ and this finish the viscous terms. The pressure term can be controlled  by parallel estimates as convection term where
\begin{equation}
\begin{aligned}
\|(\nabla(G(a)a)^{\ell}\|_{\tilde{L}_{t}^{1}(\dot{B}_{q,1}^{-3
+\frac{7}{q}})}&\lesssim\|a\|_{\tilde{L}_{t}^{\infty}(\dot{B}_{p,1}^{-1
+\frac{3}{p}})}\|a\|_{\tilde{L}_{t}^{1}(\dot{B}_{p,1}^{\frac{3}{p}})}\\
&+
\|G(a)\|_{\tilde{L}_{t}^{2}(\dot{B}_{p,1}^{\frac{3}{p}})}\|a\|_{\tilde{L}_{t}^{2}(\dot{B}_{p,1}^{-2
+\frac 4q+\frac{3}{p}})}\lesssim T^{\frac{q-2}{q}}E^2_p(t)
\end{aligned}
\end{equation}
and we conclude for $t\in[0,T]$,
$$\|(h(a, m))^{\ell}\|_{\tilde{L}_{t}^{1}(\dot{B}_{q,1}^{-3
+\frac{7}{q}})}\leq C_T( E^2_p(t)+E^3_p(t))$$
and this concludes the boundedness of $m$ with
\begin{equation}\begin{aligned}\|m^{\ell}\|_{\tilde{L}_{t}^{\infty}(\dot{B}_{q,1}^{-3
+\frac{7}{q}})\cap \tilde{L}_{t}^{1}(\dot{B}_{q,1}^{-1
+\frac{7}{q}})}
\lesssim\|m_0^{\ell}\|_{\dot{B}_{q,1}^{-3
+\frac{7}{q}}}+T2^{k_0}\| a^{\ell}\|_{\tilde{L}_{t}^{\infty}(\dot{B}_{q,1}^{-3
+\frac{7}{q}})}
+E^2_p(t)+E^3_p(t).
\end{aligned}
\end{equation}
On the other hand, for the density, the continuity equation yields
\begin{equation*}\begin{aligned}\|a^{\ell}\|_{\tilde{L}_{t}^{\infty}(\dot{B}_{q,1}^{-3
+\frac{7}{q}})}\lesssim&
\|a_0\|_{\dot{B}_{q,1}^{-3
+\frac{7}{q}}}^{\ell}+\|\mathrm{div} m^{\ell}\|_{\tilde{L}_{t}^{1}(\dot{B}_{q,1}^{-3
+\frac{7}{q}})}\\
\lesssim&\|a_0^{\ell}\|_{\dot{B}_{q,1}^{-3
+\frac{7}{q}}}+T2^{k_0}
\|m^{\ell}\|_{\tilde{L}_{t}^{\infty}(\dot{B}_{q,1}^{-3
+\frac{7}{q}})}.
\end{aligned}
\end{equation*}
Above two inequaities finally yields
\begin{equation*}\begin{aligned}\|(a,m)^{\ell}\|_{\tilde{L}_{t}^{\infty}(\dot{B}_{q,1}^{-3
+\frac{7}{q}})}
\lesssim\|(a_0,m_0)^{\ell}\|_{\tilde{L}_{t}^{\infty}(\dot{B}_{q,1}^{-3
+\frac{7}{q}})}+T2^{k_0}
\|(a,m)^{\ell}\|_{\tilde{L}_{t}^{\infty}(\dot{B}_{q,1}^{-3
+\frac{7}{q}})}+E^2_p(t)+E^3_p(t).
\end{aligned}
\end{equation*}
Then we are able to prove $\|(a,m)^{\ell}\|_{\tilde{L}_{t}^{\infty}(\dot{B}_{q,1}^{-3
+\frac{7}{q}})}$ is also locally bounded once we take $T$ to be sufficient small such that $T2^{k_0}\ll1$. Therefore we could apply the local results on uniform estimates to get to Theorem \ref{gwp}.

Finally we show the continuity. According to \cite{GYZ2024}, for any initial data \((a_0, u_0) \in \X_p\), 
there exist a neighbourhood 
\(U \subset \X_p\) of \((a_0, u_0)\) and a time \(T = T(U) > 0\) such that for every
 \((\tilde{a}_0, \tilde{u}_0) \in U\), the Cauchy problem \eqref{CNS} has a unique solution  
\[
(\tilde{a}, \tilde{u}) := S_T(\tilde{a}_0, \tilde{u}_0) \in Z_p(T) = C\big([0,T]; 
\dot{B}^{\frac{3}{p}}_{p,1}\big) \times C\big([0,T]; \dot{B}^{-1+\frac{3}{p}}_{p,1}\big),
\]
and the solution map \(S_T: U \to Z_p(T)\) is continuous. Since the solution \((a, u)\) satisfies
\[
\|a\|_{L^\infty_{T}(\dot{B}^{\frac{3}{p}}_{p,1})} + \|u\|_{L^\infty_{T}(\dot{B}^{-1+\frac{3}{p}}_{p,1})} \lesssim \|(a,u)\|_{X_{q,p}} \lesssim \mathcal{X}_0,
\]
and \(\mathcal{X}_0\) is sufficiently small, a bootstrap argument yields that
 \(S_T\) is continuous from \(U\) into \(Z_p(T)\) for any \(T > 0\).

\appendix
\section{Some paraproduct estimate}
Some basic lemmas concerning paraproducts are presented below. We begin by introducing the following nonclassical $L^{q}$--$L^{p}$ type estimates.estimates:
\begin{lemma}\label{lem key}
Let $s,m,m_{1},m_{2}\in \mathbb{R}$ and $1 \leq p,q,r\leq\infty$. Then there holds for $m=m_{1}+m_{2}$ and $1=1/p+1/p'$ such that
\begin{equation}\label{para}
\|T_{a}b\|_{\dot{B}^{s-m+\frac{d}{q}-\frac{d}{p}}_{q,r}}\lesssim
\|a\|_{\dot{B}^{\frac{d}{p}-m_{1}}_{p,1}}\|b\|_{\dot{B}^{s-m_{2}}_{p,r}} 
\,\,\,\,\mathrm{if}\,\,\,\,
m_{1}\geq d\max(0,\frac{1}{q}-\frac{1}{p})\, \mbox{and} \, \, p\leq2q;
\end{equation}
and
\begin{equation}\label{remains}
\|R(a,b)\|_{\dot{B}^{s-m+\frac{d}{q}-\frac{d}{p}}_{q,r}}
\lesssim\|a\|_{\dot{B}^{\frac{d}{p}-m_{1}}_{p,1}}
\|b\|_{\dot{B}^{s-m_{2}}_{p,r}}\,\,\,\,\mathrm{if}\,\,\,\,
s>m-d\min(\frac{1}{p},\frac{1}{p'})\, \mbox{and} \, \,  p\leq2q.
\end{equation}
\end{lemma}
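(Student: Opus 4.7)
The plan is Bony's paraproduct decomposition combined with Hölder and Bernstein inequalities applied block by block, treating \eqref{para} and \eqref{remains} separately.

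For the paraproduct \eqref{para}, write $T_a b = \sum_j \dot S_{j-1}a\,\dot\Delta_j b$ and note that each summand has Fourier support in an annulus $|\xi|\sim 2^j$, so $\dot\Delta_k(T_a b)$ picks up only $O(1)$ terms with $j\sim k$. I would split by the sign of $1/q-1/p$. When $q\leq p$, apply Hölder with $\tfrac{1}{q}=\tfrac{1}{r}+\tfrac{1}{p}$ (so $r\in[1,\infty]$) and control $\|\dot S_{j-1}a\|_{L^r}\leq\sum_{j'\leq j-2}\|\dot\Delta_{j'}a\|_{L^r}$ by Bernstein-upgrading each $L^p$ block to $L^r$; this upgrade is legitimate exactly when $p\leq r$, which unfolds to the hypothesis $p\leq 2q$. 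When $q>p$, use $\|fg\|_{L^q}\leq\|f\|_{L^\infty}\|g\|_{L^q}$ with Bernstein on both factors. The dyadic sum defining $\dot S_{j-1}a$ then converges geometrically provided $m_1\geq d\max(0,1/q-1/p)$, with $\ell^1$-summability of the $\dot B^{d/p-m_1}_{p,1}$ norm handling the borderline case. Multiplying by $2^{k(s-m+d/q-d/p)}$ and taking $\ell^r$ in $k$ yields \eqref{para}.

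For the remainder \eqref{remains}, write $R(a,b)=\sum_{j'}\tilde{\dot\Delta}_{j'}a\,\dot\Delta_{j'}b$; each summand has Fourier support in a \emph{ball} $|\xi|\lesssim 2^{j'}$, so $\dot\Delta_k R(a,b)$ involves the entire tail $j'\geq k-N$. In the main range $p\geq 2$, the sharp approach is to estimate $\|\tilde{\dot\Delta}_{j'}a\,\dot\Delta_{j'}b\|_{L^{p/2}}\leq\|\tilde{\dot\Delta}_{j'}a\|_{L^p}\|\dot\Delta_{j'}b\|_{L^p}$ by pure Hölder, then Bernstein-upgrade $\dot\Delta_k$ of the product from $L^{p/2}$ to $L^q$---legitimate exactly when $p/2\leq q$, i.e.\ $p\leq 2q$---using the fact that $\dot\Delta_k(\tilde{\dot\Delta}_{j'}a\,\dot\Delta_{j'}b)$ has Fourier support in $|\xi|\sim 2^k$. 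Multiplying by $2^{k(s-m+d/q-d/p)}$ reduces the problem to a discrete convolution $\sum_{j'\geq k-N}2^{(k-j')\tau}d_{j'}$ with $\tau=s-m+d/p$ and $\{d_{j'}\}\in\ell^r$, summable in $k$ by Young's inequality exactly when $\tau>0$; this gives $s>m-d/p = m-d\min(1/p,1/p')$ in the regime $p\geq 2$. For $p<2$, where $L^{p/2}$ is not available, one uses analogous duality or Sobolev-embedding arguments to produce the dual threshold $s>m-d/p'$, and the two regimes combine into the stated condition.

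The main obstacle is the careful exponent bookkeeping, particularly for the remainder: it is essential to Bernstein \emph{after} applying $\dot\Delta_k$, so that the gain appears on $k$ and not on $j'$, as this is what preserves the sharp Bony threshold. A naive upgrade of the product before the projection yields a spuriously strong regularity condition. Once the exponents are correctly aligned, the final $\ell^r$-in-$k$ summability follows routinely from Young's or Minkowski's inequality for discrete convolutions.
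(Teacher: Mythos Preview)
Your proposal is correct and follows essentially the same approach as the paper: the same case split on $q\le p$ versus $q>p$ for the paraproduct with H\"older plus Bernstein on $\dot S_{j-1}a$, and the same Bernstein-after-projection strategy for the remainder with the $L^{p/2}$ H\"older step when $p\ge 2$. The only place you are vague is the $p<2$ remainder case, where the paper's concrete route is Bernstein $L^q\!\to\!L^1$ on $\dot\Delta_k$, then H\"older $L^1\le L^2\cdot L^2$, then Bernstein $L^p\!\to\!L^2$ on each factor---which is precisely a Sobolev-embedding argument as you indicate.
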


\begin{proof}
To prove (\ref{para}), it firstly follows from the definition of $T_{a}b$ and the spectral cut-off property that
$$\dot{\Delta}_{k}T_{a}b=\dot{\Delta}_{k}\Big(\sum_{k'}\dot{S}_{k'-1}a\dot{\Delta}_{k'}b\Big)=\sum_{|k-k'|\leq 4}\dot{\Delta}_{k}(\dot{S}_{k'-1}a\dot{\Delta}_{k'}b).$$

On the one hand, we assume that $q\leq p$. Set $\frac{1}{q}=\frac{1}{\tilde p}+\frac{1}{p}$. The assumption $p\leq2q$ implies that $p\leq \tilde p$. Hence, we have
\EQN{
\|\dot{\Delta}_{k}T_{a}b\|_{L^q}\lesssim &\sum_{|k-k'|\leq 4}\|\dot{S}_{k'-1}a\dot{\Delta}_{k'}b\|_{L^q}\\
\lesssim &\sum_{|k-k'|\leq 4}\sum_{l\leq k'-2} \|\dot{\Delta}_{l}a\|_{L^{\tilde p}}\|\dot{\Delta}_{k'}b\|_{L^p}\\
\lesssim &\sum_{|k-k'|\leq 4}\big(\sum_{l\leq k'-2}2^{(\frac{d}{p}-\frac{d}{q}+m_{1})l}2^{(\frac{d}{p}-m_{1})l}\|\dot{\Delta}_{l}a\|_{L^p}\big)\|\dot{\Delta}_{k'}b\|_{L^p}.
}
Note that $\frac{d}{p}-\frac{d}{q}+m_{1}\geq 0$, we deduct that
\EQN{
\|\dot{\Delta}_{k}T_{a}b\|_{L^q}
\lesssim&\sum_{|k-k'|\leq 4}\big(\sup_{l\leq k'-2}2^{(\frac{d}{p}-\frac{d}{q}+m_{1})l}\big)\|a\|_{\dot{B}^{\frac{d}{p}-m_{1}}_{p,1}}\|\dot{\Delta}_{k'}b\|_{L^p}\\
\lesssim&\sum_{|k-k'|\leq 4}2^{(\frac{d}{p}-\frac{d}{q}+m_{1})k'}\|\dot{\Delta}_{k'}b\|_{L^p}\|a\|_{\dot{B}^{\frac{d}{p}-m_{1}}_{p,1}}.
}
On the other hand, if $q>p$ then
\EQN{
\|\dot{\Delta}_{k}T_{a}b\|_{L^q}\lesssim&\sum_{|k-k'|\leq 4}\|\dot{S}_{k'-1}a\dot{\Delta}_{k'}b\|_{L^q}\lesssim\sum_{|k-k'|\leq 4}\sum_{l\leq k'-2}\|\dot{\Delta}_{l}a\|_{L^\infty}\|\dot{\Delta}_{k'}b\|_{L^q}\\
\lesssim&\sum_{|k-k'|\leq 4}2^{(\frac{d}{p}-\frac{d}{q})k'}\big(\sup_{l\leq k'-2}2^{m_{1}l}\big)\|a\|_{\dot{B}^{\frac{d}{p}-m_{1}}_{p,1}}\|\dot{\Delta}_{k'}b\|_{L^p}\\
\lesssim&\sum_{|k-k'|\leq 4}2^{(\frac{d}{p}-\frac{d}{q}+m_{1})k'}\|\dot{\Delta}_{k'}b\|_{L^p}\|a\|_{\dot{B}^{\frac{d}{p}-m_{1}}_{p,1}},
}
where $m_{1}\geq0$ was used. Consequently, employing Young's inequality enables us to get to (\ref{para}).
We turn to prove (\ref{remains}). By the spectrum cut-off, one has
$$\dot{\Delta}_{k}R(a,b)=\dot{\Delta}_{k}\Big(\sum_{k'}\tilde{\dot{\Delta}}_{k'}a\dot{\Delta}_{k'}\Big)=\sum_{k\leq k'+2}\dot{\Delta}_{k}(\tilde{\dot{\Delta}}_{k'}a\dot{\Delta}_{k'}b),$$
where $\tilde{\dot{\Delta}}_{k'}a\triangleq\sum_{|k-k'|\leq1}\dot{\Delta}_{k}a.$ We consider the case $1\leq p\leq2$ first. By H\"{o}lder and Bernstein inequalities, we arrive at
\EQN{
\|\dot{\Delta}_{k}R(a,b)\|_{L^q}
\lesssim&2^{(d-\frac{d}{q})k}\|\dot{\Delta}_{k}\Big(\sum_{k'}\tilde{\dot{\Delta}}_{k'}a\dot{\Delta}_{k'}b\Big)\|_{L^1}\lesssim2^{(d-\frac{d}{q})k}\sum_{k'\geq k-2}\|\dot{\Delta}_{k'}a\|_{L^{2}}\|\dot{\Delta}_{k'}b\|_{L^2}\\
\lesssim&2^{(d-\frac{d}{q})k}\sum_{k'\geq k-2}2^{(m-s+\frac{d}{p}-d)k'}2^{(\frac{d}{p}-m_{1})k'}\|\dot{\Delta}_{k'}a\|_{L^{p}}2^{(s-m_{2})k'}\|\dot{\Delta}_{k'}b\|_{L^p}.
}
If $s>m-\frac{d}{p'}$, then $s-m-\frac{d}{p}+d>0$, so one can immediately have (\ref{remain}),
where H$\ddot{o}$lder inequality for series was performed. On the other hand, we deal with the case $2< p\leq2q$. By H\"{o}lder and Bernstein inequalities, we have
\EQN{
\|\dot{\Delta}_{k}R(a,b)\|_{L^q}
\lesssim&2^{(\frac{2d}{p}-\frac{d}{q})k}\|\dot{\Delta}_{k}\Big(\sum_{k'}\tilde{\dot{\Delta}}_{k'}a\dot{\Delta}_{k'}b\Big)\|_{L^\frac{p}{2}}\lesssim2^{(\frac{2d}{p}-\frac{d}{q})k}\sum_{k'\geq k-2}\|\dot{\Delta}_{k'}a\|_{L^{p}}\|\dot{\Delta}_{k'}b\|_{L^p}\\
\lesssim&2^{(\frac{2d}{p}-\frac{d}{q})k}\sum_{k'\geq k-2}2^{(m-s-\frac{d}{p})k'}2^{(\frac{d}{p}-m_{1})k'}\|\dot{\Delta}_{k'}a\|_{L^{p}}2^{(s-m_{2})k'}\|\dot{\Delta}_{k'}b\|_{L^p}.
}
Since $s>m-\frac{d}{p}$, i.e. $s-m+\frac{d}{p}>0$, we similarly get (\ref{remains}).
\end{proof}

\begin{rema}\label{remarkbon}
Let $s, s_1, s_2 \in \mathbb{R}, s=s_1+s_2$  and \(1 \leq p, r \leq \infty\).
 For the paraproduct, when \(s_1 \leq 0\), we have
\begin{equation*}
 \|T_f g\|_{\dot{B}^{s}_{p,r}} \lesssim \|f\|_{\dot{B}^{s_1}_{\infty,1}} \|g\|_{\dot{B}^{s_2}_{p,r}}.   
\end{equation*}
Furthermore, if \(s > 0\), the remainder satisfies
\begin{equation*}\label{remain}
\|R(f, g)\|_{\dot{B}^{s}_{p,r}} \lesssim \|f\|_{\dot{B}^{s_1}_{\infty,1}} \|g\|_{\dot{B}^{s_2}_{p,r}}.
\end{equation*}
Moreover, the case \(s = 0\) holds in the above inequalities when \(r = \infty\).
\end{rema}

Motivated by Lemma 6.1 in  \cite{DH2016}, we now prove the following commutator estimates.
\begin{lemma}\label{commo}
Let \(A(D)\) be a \(0\)-order Fourier multiplier, and \(k_{0} \in \mathbb{Z}\). 
Let $2\leq q\leq p\leq2q$. Then there exists a constant $C$ depends on
 $k_{0}$ such that for $s\leq1$ and $\sigma\in \mathbb{R}$, it holds
\begin{equation*}
\|[\dot S_{k_{0}}A(D),T_{a}]b\|_{\dot{B}_{q,1}^{\sigma+s}}
\leq C\|\nabla a\|_{\dot{B}_{p,1}^{s-1+\frac{2d}{p}-\frac{d}{q}}}
\|b\|_{\dot{B}_{p,1}^{\sigma}}.
\end{equation*}
\end{lemma}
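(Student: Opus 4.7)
The plan is to follow a standard kernel-based commutator argument. First, decompose
\[
[\dot S_{k_0} A(D), T_a] b = \sum_j C_j, \qquad C_j := [\dot S_{k_0} A(D), \dot S_{j-1} a]\dot \Delta_j b.
\]
A direct spectral analysis shows that $C_j \equiv 0$ for $j \geq k_0 + C_0$, and that for $j \leq k_0 + C_0$ the spectrum of $C_j$ is contained in an annulus of the form $\{c_1 2^j \leq |\xi| \leq c_2 \min(2^{j+1}, 2^{k_0+1})\}$. Hence each $\dot{\Delta}_J C_j$ vanishes unless $|J-j|=O(1)$, and therefore
\[
\|[\dot S_{k_0}A(D), T_a]b\|_{\dot B^{\sigma+s}_{q,1}} \lesssim \sum_{j \leq k_0 + C_0} 2^{j(\sigma + s)} \|C_j\|_{L^q},
\]
with implicit constant depending on $k_0$.

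Next, I use the kernel representation of $\dot S_{k_0} A(D)$. Its kernel $h$ is Schwartz with $\|\,|z|h(z)\|_{L^1}\lesssim 1$. Applying Taylor's formula to $\dot S_{j-1} a$ inside the commutator gives
\[
C_j(x) = -\int_0^1\!\!\int h(z)\, z \cdot \nabla \dot S_{j-1} a(x - \theta z)\, \dot \Delta_j b(x - z)\, dz\, d\theta.
\]
Minkowski combined with Hölder (for $\tfrac1q=\tfrac1{p_1}+\tfrac1{p_2}$) and translation invariance of $L^{p_i}$-norms yield
\[
\|C_j\|_{L^q} \lesssim \|\nabla \dot S_{j-1} a\|_{L^{p_1}} \|\dot \Delta_j b\|_{L^{p_2}}.
\]
I will take $p_1=p_2=2q$, which is legitimate because the hypothesis $p\leq 2q$ guarantees $p_1,p_2\geq p$. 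Bernstein's inequality then gives
\[
\|\nabla \dot S_{j-1} a\|_{L^{2q}} \lesssim \sum_{k \leq j-2} 2^{k(d/p - d/(2q))} \|\dot \Delta_k \nabla a\|_{L^p}, \quad \|\dot \Delta_j b\|_{L^{2q}} \lesssim 2^{j(d/p - d/(2q))} \|\dot \Delta_j b\|_{L^p}.
\]

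For the final assembly, set $\alpha = s - 1 + \tfrac{2d}{p} - \tfrac{d}{q}$ and $\mu = \tfrac{d}{2q}-\tfrac{d}{p}\leq 0$, and normalise $\tilde a_k = 2^{k\alpha}\|\dot \Delta_k \nabla a\|_{L^p}$, $\tilde b_j = 2^{j\sigma}\|\dot \Delta_j b\|_{L^p}$. After substitution and exchanging the order of summation, the estimate reduces to controlling
\[
\sum_{k \leq k_0} 2^{k(1-s+\mu)} \tilde a_k \sum_{k+2 \leq j \leq k_0+C_0} 2^{j(s-\mu)} \tilde b_j.
\]
I expect the main obstacle to be the bookkeeping in this last step: exponents like $2^{k(1-s+\mu)}$ or $2^{j(s-\mu)}$ can in principle blow up as their indices tend to $-\infty$. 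The key observation that resolves this is that both summation indices are bounded above by $k_0+O(1)$, so any residual power $2^{kc_1}2^{jc_2}$ with bounded exponents $c_i$ can be absorbed into a $k_0$-dependent constant. A short case split on the signs of $s-\mu$ and $1-s+\mu$ (crucially using the hypothesis $s\leq 1$) then produces $\lesssim_{k_0} \|\nabla a\|_{\dot B^{\alpha}_{p,1}} \|b\|_{\dot B^\sigma_{p,1}}$, which is the claim.
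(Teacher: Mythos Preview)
Your argument has a genuine gap in the final summation step. After Bernstein, you arrive at
\[
\sum_{j\le k_0+C_0}\ \sum_{k\le j-2} 2^{k(1-s+\mu)}\,2^{j(s-\mu)}\,\tilde a_k\,\tilde b_j,
\qquad \mu=\tfrac{d}{2q}-\tfrac{d}{p}\le 0,
\]
and claim that a case split on the signs of $s-\mu$ and $1-s+\mu$, together with $j,k\le k_0+O(1)$, closes the estimate. But since $(s-\mu)+(1-s+\mu)=1$, the case $s-\mu>1$ (equivalently $1-s+\mu<0$) is not excluded by $s\le 1$; it occurs whenever $p<2q$ and $s\in(1+\tfrac{d}{2q}-\tfrac{d}{p},\,1]$. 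In that regime neither factorisation $2^{k}2^{(j-k)(s-\mu)}$ nor $2^{j}2^{(k-j)(1-s+\mu)}$ produces a summable convolution kernel, and the upper bound on $j,k$ does not help because $k$ may tend to $-\infty$. Concretely, taking $\tilde a=\delta_{-M}$ and $\tilde b=\delta_{k_0}$ gives a contribution $\sim 2^{-M(1-s+\mu)}\to\infty$ as $M\to\infty$, while $\|\tilde a\|_{\ell^1}=\|\tilde b\|_{\ell^1}=1$. So your intermediate bound is simply too weak to imply the lemma in this range.

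The missing ingredient is a $2^{-j}$ gain in the commutator bound, and this is exactly what the paper extracts by frequency localization: since both $\dot\Delta_j b$ and $\dot S_{j-1}a\cdot\dot\Delta_j b$ have Fourier support in the annulus $|\xi|\sim 2^j$, one may replace $\dot S_{k_0}A(D)$ by a \emph{scaled} multiplier $\tilde\phi(2^{-j}D)$ there, whose kernel satisfies $\||z|K_j\|_{L^1}\sim 2^{-j}$. This turns the weight into $2^{(j-k)(s-1)}$, which is $\ell^1$ in $j-k\ge 2$ precisely when $s\le 1$, and the Young-type argument closes. Your fixed kernel $h$ of $\dot S_{k_0}A(D)$ gives only $\||z|h\|_{L^1}\lesssim_{k_0}1$, independent of $j$, which is not enough. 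A secondary issue: for a genuinely homogeneous degree-$0$ symbol $A$ (e.g.\ a Riesz transform), the multiplier $\eta(2^{-k_0+1}\xi)A(\xi)$ is singular at $\xi=0$, and its inverse Fourier transform is not Schwartz (nor is $|z|h$ in $L^1$ in general); the scaled replacement $\tilde\phi(2^{-j}D)$, being smooth and compactly supported away from the origin, avoids this as well.
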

\begin{proof}
(1) When $s<1$.
From the definitions of  paraproduct and  commutator, we obtain
\[
[\dot{S}_{k_{0}}A(D),T_{a}]b = \sum_{j\in\mathbb{Z}} 
[\dot{S}_{k_{0}}A(D),\dot{S}_{j-1}a] \dot{\Delta}_{j}b.
\]
Since $A(D)$ is homogeneous of degree zero, we can use the localization properties of $\dot{S}_{k}$ and $\dot{\Delta}_{k}$ to choose a smooth function $\tilde{\phi}$ such that for all $j \leq k_{0} - 4$,
\[
[\dot{S}_{k_{0}}A(D),\dot{S}_{j-1}a]\dot{\Delta}_{j}b = \sum_{k\leq j-2} [\tilde{\phi}(2^{-j}D),\dot{\Delta}_{k}a] \dot{\Delta}_{j}b.
\]
By applying Lemma 2.97 from \cite{BaChDa11} under the condition that $q \leq p$, we obtain
\begin{equation}\label{eq:6.76}
\left\lVert[\tilde{\phi}(2^{-j}D),\dot{\Delta}_{k}a] \dot{\Delta}_{j}b\right\rVert_{L^{q}}
 \lesssim 2^{-j} \left\lVert\dot{\Delta}_{k}\nabla a\right\rVert_{L^{\frac{pq}{p-q}}} \left\lVert\dot{\Delta}_{j}b\right\rVert_{L^{p}}.
\end{equation}
For $|j - k_{0}| \leq 4$, there exists a smooth annularly-supported $\psi$ such that
\begin{equation*}
[\dot{S}_{k_{0}}A(D),\dot{S}_{j-1}a]\dot{\Delta}_{j}b 
= [\psi(2^{-k_{0}}D),\dot{S}_{j-1}a] \dot{\Delta}_{j}b,
\end{equation*}
which again satisfies \eqref{eq:6.76}. Restricting to $s<1$ and $p\leq2q$, and using Lemma 2.23 from \cite{BaChDa11} with convolution inequalities, we derive
\begin{equation*}
\begin{aligned}
&\|[\dot S_{k_{0}}A(D),T_{a}]b\|_{\dot{B}_{q,1}^{\sigma+s}}
\lesssim \sum_{j\in\mathbb{Z}}2^{j(\sigma+s)}\lVert 
[\dot{S}_{k_{0}}A(D),\dot{S}_{j-1}a] \dot{\Delta}_{j}b\rVert_{L^{q}}\\
&\lesssim \sum_{j\in\mathbb{Z}}\sum_{k\leq j-2}2^{(j-k)(s-1)} 2^{k(s-1)}\lVert\dot{\Delta}_{k}\nabla 
a \rVert_{L^{\frac{pq}{p-q}}} 2^{j\sigma}\lVert\dot{\Delta}_{j}b\rVert_{L^{p}}\\
&\lesssim \lVert\nabla a\rVert_{\dot{B}_{\frac{pq}{p-q},1}^{s-1}}
\lVert b\rVert_{\dot{B}_{p,\infty}^{\sigma}}\lesssim \lVert\nabla a\rVert_{\dot{B}_{p,1}^{s-1+\frac{2d}{p}-\frac{d}{q}}}
\lVert b\rVert_{\dot{B}_{p,1}^{\sigma}}.\\
\end{aligned}
\end{equation*}

(2) When $s=1$. Analogous to step 1, we have
\begin{equation*}
\begin{aligned}
&\left\lVert[\dot{S}_{k_0}A(D), T_a]b\right\rVert_{\dot{B}_{q,1}^{\sigma+s}} 
\lesssim \sum_{j\in\mathbb{Z}}\sum_{k\leq j-2}\left\lVert\dot{\Delta}_{k}\nabla a\right\rVert_{L^{\frac{pq}{p-q}}} 2^{j\sigma}\left\lVert\dot{\Delta}_{j}b\right\rVert_{L^{p}} \\
&\lesssim \left\lVert\nabla a\right\rVert_{\dot{B}_{\frac{pq}{p-q},1}^{0}}
\left\lVert b\right\rVert_{\dot{B}_{p,1}^{\sigma}} \lesssim \left\lVert\nabla a\right\rVert_{\dot{B}_{p,1}^{\frac{2d}{p}-\frac{d}{q}}}
\left\lVert b\right\rVert_{\dot{B}_{p,1}^{\sigma}}.
\end{aligned}
\end{equation*}
The desired results follow from these estimates.
\end{proof}

\section*{Acknowledgement}

This work was started while the first named author was visiting RIMS of Kyoto University. He would like to thank RIMS for the great hospitality. The authors are grateful to Professors Kenji Nakanishi and Nobu Kishimoto for the precious comments. Z. Guo is the recipient of an Australian Research Council Future Fellowship (project number FT230100588) funded by the Australian Government. M. Yang is supported by the National Natural Science Foundation of China (Grant No. 12161041) and the Jiangxi Province Natural Science Foundation (Grant No. 20252BAC250004).

\onehalfspacing

\end{document}